
\documentclass[reqno,11pt]{amsart}
\usepackage[dvips]{color}
\usepackage{amsmath, latexsym, amsfonts, amssymb, amsthm, amscd,mathrsfs}
\usepackage{graphics,epsf,psfrag}
\setlength{\oddsidemargin}{5mm}
\setlength{\evensidemargin}{5mm}
\setlength{\textwidth}{150mm}
\setlength{\headheight}{0mm}
\setlength{\headsep}{12mm}
\setlength{\topmargin}{0mm}
\setlength{\textheight}{220mm}
\setcounter{secnumdepth}{2}

\numberwithin{equation}{section}

\newtheorem{atheorem}{Theorem}

\newtheorem{theorem}{Theorem}[section]
\newtheorem{fact}[theorem]{Fact}
\newtheorem{lemma}[theorem]{Lemma}
\newtheorem{proposition}[theorem]{Proposition}
\newtheorem{corollary}[theorem]{Corollary}
\newtheorem{remark}[theorem]{Remark}
\newtheorem{definition}[theorem]{Definition}

\newtheorem{example}{Example}

\newcommand{\ind}{\mathbf{1}}

\newcommand{\R}{\mathbb{R}}
\newcommand{\Z}{\mathbb{Z}}
\newcommand{\N}{\mathbb{N}}
\renewcommand{\tilde}{\widetilde}
\renewcommand{\hat}{\widehat}

\newcommand{\cT}{{\ensuremath{\mathcal T}} }

\newcommand{\bP}{{\ensuremath{\mathbf P}} }
\newcommand{\bE}{{\ensuremath{\mathbf E}} }


\DeclareMathSymbol{\leqslant}{\mathalpha}{AMSa}{"36} 
\DeclareMathSymbol{\geqslant}{\mathalpha}{AMSa}{"3E} 
\DeclareMathSymbol{\eset}{\mathalpha}{AMSb}{"3F}     
\renewcommand{\leq}{\;\leqslant\;}                   
\renewcommand{\geq}{\;\geqslant\;}                   
 

\newcommand{\sumtwo}[2]{\sum_{\substack{#1 \\ #2}}} 


\newcommand{\pin}{\ensuremath{\pi_n}}

\newcommand{\T}{\ensuremath{\mathcal{T}}}


\newcommand{\bbE}{{\ensuremath{\mathbb E}} }

\newcommand{\bbN}{{\ensuremath{\mathbb N}} }

\newcommand{\bbP}{{\ensuremath{\mathbb P}} }

\newcommand{\bbZ}{{\ensuremath{\mathbb Z}} }

\renewcommand{\epsilon}{\varepsilon}

\newcommand{\gb}{\beta}
\newcommand{\gep}{\varepsilon}       

\newcommand{\gO}{\Omega}
\newcommand{\gl}{\lambda}

\makeatletter
\def\captionfont@{\footnotesize}
\def\captionheadfont@{\scshape}

\long\def\@makecaption#1#2{%
  \vspace{2mm}
  \setbox\@tempboxa\vbox{\color@setgroup
    \advance\hsize-6pc\noindent
    \captionfont@\captionheadfont@#1\@xp\@ifnotempty\@xp
        {\@cdr#2\@nil}{.\captionfont@\upshape\enspace#2}%
    \unskip\kern-6pc\par
    \global\setbox\@ne\lastbox\color@endgroup}%
  \ifhbox\@ne 
    \setbox\@ne\hbox{\unhbox\@ne\unskip\unskip\unpenalty\unkern}%
  \fi
  \ifdim\wd\@tempboxa=\z@ 
    \setbox\@ne\hbox to\columnwidth{\hss\kern-6pc\box\@ne\hss}%
  \else 
    \setbox\@ne\vbox{\unvbox\@tempboxa\parskip\z@skip
        \noindent\unhbox\@ne\advance\hsize-6pc\par}%
\fi
  \ifnum\@tempcnta<64 
    \addvspace\abovecaptionskip
    \moveright 3pc\box\@ne
  \else 
    \moveright 3pc\box\@ne
    \nobreak
    \vskip\belowcaptionskip
  \fi
\relax
}
\makeatother
\def\writefig#1 #2 #3 {\rlap{\kern #1 truecm
\raise #2 truecm \hbox{#3}}}


\renewcommand{\Pr}{ \mathrm P}
\newcommand{ \rel}{ t_{\mathrm{rel}} }
\newcommand{ \reln}{ t_{\mathrm{rel}}^{(n)} }
\newcommand{ \mix}{ t_{\mathrm{mix}} }
\newcommand{ \mixn}{ t_{\mathrm{mix}}^{(n)} }
\newcommand{ \dsep}{ d_{\mathrm{sep}} }
\newcommand{ \dsepn}{ d_{\mathrm{sep}}^{(n)} }
\newcommand{ \sep}{ t_{\mathrm{sep}} }
\newcommand{ \sepn}{ t_{\mathrm{sep}}^{(n)} }
\newcommand{ \sepeps}{ t_{\mathrm{sep}}(\epsilon) }
\newcommand{ \sepneps}{ t_{\mathrm{sep}}^{(n)}(\epsilon) }

\newcommand{ \mixneps}{ t_{\mathrm{mix}}^{(n)}(\epsilon) }

\newcommand{ \TV}{ \mathrm{TV} }

\newcommand{ \cL}{ \mathcal L }

\newcommand{\eps}{\epsilon }


\newcommand{\tf}{\textsc{f}}

\usepackage[normalem]{ulem}

\begin{document}

\title[Separation and Total Variation Cutoffs]{Total Variation and Separation Cutoffs are not equivalent and neither one implies the other}
\author{ Jonathan Hermon
\and Hubert Lacoin
\and
Yuval Peres
}

\date{}

\begin{abstract}
The cutoff phenomenon describes the case when an abrupt transition occurs in the convergence of a Markov chain to its equilibrium measure.
There are various metrics which can be used to measure the distance to equilibrium, each of which corresponding to a different notion of cutoff.
The most commonly used are the total-variation and the separation distances.
In this note we prove that the cutoff for these two distances are not equivalent by 
constructing several counterexamples which display cutoff in total-variation but not in separation and with the opposite behavior, including 
lazy simple random walk on a sequence of uniformly bounded degree expander graphs.
These examples give a negative answer to a question of Ding, Lubetzky and Peres.\\
{\em Keywords: Markov chains, Mixing time, Cutoff, Counter Example }

\end{abstract}

\maketitle

\section{Introduction}

Consider an irreducible discrete-time Markov chains $X=(X_{t})_{t\ge 0}$, defined on a finite state space $\gO$ 
(we call a chain finite if $\Omega$ is finite).
We let $P$  denote its transition matrix. We further assume that $X$ is reversible, that is that there exists a  probability measure $\pi$
which satisfies the detailed balanced equation
\begin{equation*}
\forall x,y \in \Omega, \quad \pi(x)P(x,y)=\pi(y)P(y,x).
\end{equation*}
This measure is unique because of irreducibility.
Let us assume furthermore that our Markov chain is lazy, meaning that  
\begin{equation}
 \forall x \in \gO,\quad  P(x,x) \ge 1/2.
\end{equation}
A particular important case of such a Markov chain is \textbf{{\em lazy simple random walk}}
(SRW) on a simple graph  $G=(V,E)$, in which case $\gO=V$, $P(x,y)=\frac{1_{{\{ x=y \}}}}{2}+\frac{1_{\left\{\{x,y\} \in E\right\}}}{2 \deg(x)}$ and $\pi(x)=\frac{\deg(x)}{2|E|}$, where $\deg(x):=|\{y:\{x,y\} \in E \}|$
 and $|\cdot|$ denotes the cardinality of a set.

\medskip

It is a classic result of probability theory that for any initial condition the distribution 
of $X(t)$ converges to $\pi$ when $t$ tends to infinity. The object of the theory of Mixing times of Markov chains is to
study the characteristic of this convergence (see \cite{cf:LPW} for a self-contained introduction to the subject).



We denote by $\Pr_{x}^t$ ($\Pr_{x}$) the distribution of $X_t$ (resp.~$(X_t)_{t
\ge 0}$), given that $X_0=x$.   For any two distributions $\mu,\nu$ on $\Omega$,  their \emph{\textbf{total-variation
distance}} is defined to
be
\begin{equation}\label{def:TV}
\|\mu-\nu\|_\mathrm{TV} := \frac{1}{2}\sum_{x\in \gO } |\mu(x)-\nu(x)|=  \!\!\! \!\!\!  \!\!\!   \sum_{\{ x \ : \ 
 \mu(x)>\nu(x)\}} \!\!\!  \!\!\!  \!\!\!  \mu(x)-\nu(x)=1-\sum_{x\in \gO} \min ( \mu(x), \nu(x)).
\end{equation}
The worst-case total-variation distance at time $t$ is defined as 
\begin{equation}\label{def:dt}
d(t):= \max_{x \in \Omega} d_{x}(t), \text{ where } d_{x}(t):=  \| \Pr_x(X_t \in \cdot)- \pi\|_\mathrm{TV}.
\end{equation}
The (total-variation) $\gep$\emph{\textbf{-mixing-time}} is defined as  $$t_{\mathrm{mix}}(\epsilon)
:= \inf \left\{t : d(t) \leq
\epsilon \right\}. $$ 
Similarly, the (worst-case) \emph{\textbf{separation distance}} from stationarity at time $t$ is defined as 
$$\dsep(t):=1-\min_{x,y\in \gO} P^t(x,y)/\pi(y), $$
and the $\epsilon$\emph{\textbf{-separation-time}} (the ``$\epsilon$ separation-mixing-time") is defined as
$$\sepeps:= \inf \left\{t : \dsep(t) \leq
\epsilon \right\}. $$

When $\epsilon=1/4$ we omit it from the above
notation.

\medskip

Next, consider a sequence of chains, $((\Omega_n,P_n,\pi_n): n \in \N)$,
each with its corresponding
worst-distances from stationarity $d^{(n)}(t)$, $\dsepn(t)$, its mixing and separation times $t_{\mathrm{mix}}^{(n)}$, $\sepn $,
etc.. Loosely speaking, the total-variation (resp.~separation) \emph{\textbf{cutoff phenomenon}}
 is said to occur when over a negligible period of time, known as the \emph{\textbf{cutoff
window}}, the worst-case total variation distance (resp.~separation distance) drops abruptly from a value
close to 1 to near $0$. In other words, one should run the $n$-th chain until
time $(1-o(1))\mixn $ (resp.~$(1-o(1))\sepn $) for it to even slightly mix in total variation (resp.~separation), whereas
running it any further after time $(1+o(1))\mixn $ (resp.~$(1+o(1))\sepn $) is essentially redundant.
Formally, we say that the sequence exhibits a \emph{\textbf{total-variation cutoff}} (resp.~\emph{\textbf{separation cutoff}}) if the
following
sharp transition in its convergence to stationarity occurs:
\begin{equation}\label{def:cutoff}
\forall \gep\in (0,1/2], \quad  \lim_{n \to \infty}t_{\mathrm{mix}}^{(n)}(\epsilon)/t_{\mathrm{mix}}^{(n)}(1-\epsilon)=1 
\left( \text{ resp. }\lim_{n \to \infty}t_{\mathrm{sep}}^{(n)}(\epsilon)/t_{\mathrm{sep}}^{(n)}(1-\epsilon)=1\right).
\end{equation}

\medskip

It is a classical result (e.g.\  \cite[Lemmas 6.13 and 19.3]{cf:LPW} or \eqref{mixingbndonsep}) that under reversibility the separation and total-variation distances and mixing times 
can be compared as follows (the second line being an easy consequence of the first)
\begin{equation}
\label{eq:TVsepasymeqiv}
\begin{split}
& \forall t\ge 0, \quad d(t)\le \dsep(t)\le 1-(1-\min(2d(t/2),1))^2\le 4 d(t/2), \\ & \forall a \in (0,1), \quad \quad  \mix(a) \le \sep(a)  \le 2\mix(a/4).
\end{split}
\end{equation}
Another important family of distances  is the family of $\ell_p$ distances ($1 \le p \le \infty$):
\begin{equation*}
\label{eq: Lpdef}
\|\mu-\nu \|_{p,\pi}:=\begin{cases}\left[\sum_{x} \pi(x) a_{\mu,\nu,\pi}^p(x) \right]^{1/p}, & 1 \le p<\infty, \\
\max_{x\in \gO} a_{\mu,\nu,\pi}(x), & p = \infty, \\
\end{cases}
\end{equation*}
where $a_{\mu,\nu,\pi}(x):=|\mu(x)-\nu(x)|/\pi(x) $ (observe that $\|\mu-\nu \|_{1,\pi}=2\|\mu - \nu \|_{\mathrm{TV}} $). Note that the notion of distance to equilibrium and mixing time can be transposed to these distances by replacing $\|\cdot \|_{\mathrm{TV}}$ by $\|\cdot \|_{p,\pi}$
in \eqref{def:dt}. For $a\in (0,\infty)$ we denote the $a$-th $\ell_p$-mixing time by $t_{\ell_p}(a)$.
Under reversibility, the $\ell_p$ distances can be compared as follows (see \cite[Proposition 5.1]{cf:Chen})
\begin{equation}
\label{eq: 1.7}
 \begin{split}
 t_{\ell_2}(a) &\le  t_{\ell_p}(a) \le   2 t_{\ell_2}(\sqrt{a}) \quad \text{ for } p\in (2,\infty],\\
 \frac{1}{m_p} t_{\ell_2}(a^{m_p}) &\le  t_{\ell_p}(a)\le   t_{\ell_2}(a)  \quad \quad \, \, \text{ for } p\in (1,2),
  \end{split}
\end{equation}
where $m_p:= \lceil p/(2(p-1)) \rceil$.
Hence in some sense, up to a multiplicative constant, the different $\ell_p$  mixing times ($p\in (1,\infty]$)  are equivalent. 
It turns out that under reversibility the notion of cutoff for these distances are also equivalent.

\begin{atheorem}[Chen and Saloff-Coste \cite{cf:Chen}]
\label{thm: Lpcutoffequiv} Let $(\Omega_n,P_n,\pi_n)$ be a sequence of reversible lazy Markov chains. Let  $\lambda_2^{(n)} $ be the second largest eigenvalue of $P_n$.
 Then the following assertions are equivalent
 \begin{itemize}
 \item
The sequence exhibits $\ell_p$-cutoff for some $1<p \le \infty$.
\item
The sequence exhibits $\ell_p$-cutoff for all $1<p \le \infty$.
\item $\lim_{n \to \infty}(1-\lambda_2^{(n)}) t_{\ell_2}^{(n)}(1/2)=\infty $.
\end{itemize} 
\end{atheorem}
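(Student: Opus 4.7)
The plan is to introduce the \emph{product condition} $(1-\lambda_2^{(n)})\, t_{\ell_p}^{(n)}(1/2) \to \infty$, show it is equivalent to $\ell_p$-cutoff for each fixed $p \in (1,\infty]$, and show the condition itself does not depend on the choice of $p$. This yields the three-way equivalence in the theorem.

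The spectral decomposition of the reversible operator $P$ combined with laziness (so $\lambda_i \in [0,1]$) yields the submultiplicative bound
$$f_2(t+s) \le \lambda_2^s f_2(t), \quad \text{where}\quad f_2(t) := \max_x \|P^t(x,\cdot) - \pi\|_{2,\pi},$$
which translates into $t_{\ell_2}(a) - t_{\ell_2}(b) \le t_{\mathrm{rel}}\log(b/a)(1+o(1))$ for $a<b$. Under the product condition this window is $o(t_{\ell_2}^{(n)}(1/2))$, giving $\ell_2$-cutoff. Conversely, restricting the spectral sum to the $\lambda_2$-eigenfunction yields $f_2(t) \ge \lambda_2^t |\phi_2(x)|$ with $\max_x|\phi_2(x)| \ge 1$ (since $\|\phi_2\|_{2,\pi}=1$ and $\pi$ is a probability), producing a matching lower bound on the window. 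Hence cutoff must fail whenever $t_{\mathrm{rel}} \not\ll t_{\ell_2}^{(n)}(1/2)$, establishing $\ell_2$-cutoff $\Leftrightarrow$ product condition.

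To pass to $p=\infty$, use the identity $P^{2t}(x,x)/\pi(x) - 1 = \sum_{i\ge 2}\lambda_i^{2t}\phi_i(x)^2$ together with the Cauchy-Schwarz form $|P^{t+s}(x,y)/\pi(y)-1| = |\langle P^t(x,\cdot)/\pi - 1, P^s(y,\cdot)/\pi - 1\rangle_\pi|$, which gives the \emph{sharp} identity $f_\infty(2t) = f_2(t)^2$, hence $t_{\ell_\infty}(a^2) = 2\, t_{\ell_2}(a) + O(1)$. This converts $\ell_2$-cutoff to $\ell_\infty$-cutoff exactly. For $p \in (2,\infty)$ invoke the Riesz-Thorin interpolation $f_p(t) \le f_2(t)^{2/p} f_\infty(t)^{1-2/p}$ to sandwich $t_{\ell_p}(\cdot)$ between $t_{\ell_2}$ and $t_{\ell_\infty}$ mixing times, whose cutoff has already been established. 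For $p \in (1,2)$ the second line of (1.7) sandwiches $t_{\ell_p}(a)$ between $\tfrac{1}{m_p} t_{\ell_2}(a^{m_p})$ and $t_{\ell_2}(a)$; since $\ell_2$-cutoff gives $t_{\ell_2}(a) \sim t_{\ell_2}(a^{m_p})$ for fixed $a$, this yields $\ell_p$-cutoff.

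Finally, the independence of the product condition from $p$ follows directly from (1.7): for any $p \in (1,\infty]$ the quantity $t_{\ell_p}^{(n)}(1/2)$ lies within a universal multiplicative constant of $t_{\ell_2}^{(n)}(1/2)$, so $(1-\lambda_2^{(n)}) t_{\ell_p}^{(n)}(1/2) \to \infty$ simultaneously for all such $p$. The main difficulty is the converse direction of the $\ell_2$ step, which rests on the sharp lower bound for the cutoff window via isolation of the $\lambda_2$-eigenspace; the remainder of the argument is bookkeeping built on spectral submultiplicativity and the reversibility identity $f_\infty(2t)=f_2(t)^2$.
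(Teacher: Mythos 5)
The paper does not actually prove Theorem \ref{thm: Lpcutoffequiv}; it is quoted from Chen and Saloff-Coste \cite{cf:Chen}, so there is no internal proof to compare against and your argument must stand on its own. The parts you carry out in detail are sound: for lazy reversible chains the submultiplicativity $f_2(t+s)\le \lambda_2^{s}f_2(t)$ together with the eigenfunction lower bound $f_2(t)\ge \lambda_2^{t}$ does give the equivalence of $\ell_2$-cutoff with the product condition (up to the degenerate case of bounded mixing times), and the exact identity $f_\infty(2t)=f_2(t)^2$, valid by reversibility, transfers this to $p=\infty$ with the cutoff time doubling.

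The genuine gap is the passage to intermediate $p$. For $p\in(2,\infty)$ the interpolation $f_2\le f_p\le f_2^{2/p}f_\infty^{1-2/p}$ only places $t_{\ell_p}(a)$ in the interval between the $\ell_2$ and $\ell_\infty$ mixing times, and by your own identity these two cutoffs occur at asymptotically different locations (the $\ell_\infty$ time is twice the $\ell_2$ time). A sandwich with constant-factor slack is compatible with $f_p$ decaying gradually across the whole stretch between the two cutoff times, whose length is of the same order as $t_{\ell_p}$ itself, so no abrupt transition for $\ell_p$ follows: being squeezed between two families that each exhibit cutoff at different times is not a cutoff criterion. The same defect appears for $p\in(1,2)$: you drop the factor $1/m_p$ in \eqref{eq: 1.7}, and since $m_p\ge 2$ for every $p\in(1,2)$ the two sides of that sandwich also differ by a factor of at least $2$, so $t_{\ell_2}(a)\sim t_{\ell_2}(a^{m_p})$ does not pin down $t_{\ell_p}(a)$. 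What is needed instead is submultiplicativity of $f_p$ itself at a rate governed by $\lambda_2$: writing $h^x_t:=P^t(x,\cdot)/\pi-1$, reversibility gives $h^x_{t+s}=P^s h^x_t$ with $h^x_t$ of mean zero, and interpolating the norm of $P^s$ on mean-zero functions between $\ell_2(\pi)$ (where it equals $\lambda_2^{s}$ by laziness) and $\ell_1(\pi)$ or $\ell_\infty(\pi)$ (where it is bounded) yields $f_p(t+s)\le C\lambda_2^{c_p s}f_p(t)$ with $c_p>0$ depending only on $p$. Combined with the product condition expressed at $t_{\ell_p}(1/2)$ (equivalent to the $\ell_2$ version by \eqref{eq: 1.7}), this single inequality gives both the blow-up of $f_p$ strictly before $t_{\ell_p}^{(n)}(1/2)$ and its decay strictly after, i.e.\ $\ell_p$-cutoff; this operator-norm interpolation, not the mixing-time sandwich, is the missing ingredient, and it is essentially how Chen and Saloff-Coste argue. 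A minor further point: for the direction ``cutoff for some $p$ implies the product condition'' with $p<2$, replace your $\ell_2$-normalized eigenfunction bound by the sup-normalized one, which gives $f_p(t)\ge f_1(t)\ge \lambda_2^{t}$ for every $p\ge 1$.
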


Observe that under reversibility (for any fixed chain) \eqref{eq:TVsepasymeqiv} expresses  an equivalence between the separation and the total-variation mixing times,
parallel to the one, expressed in \eqref{eq: 1.7}, holding between the different $\ell_p$  mixing times for $p\in (1,\infty]$.
Hence a natural question (in light of Theorem \ref{thm: Lpcutoffequiv}) is
whether (under reversibility) there is cutoff in total-variation if and only if there is cutoff in separation. This is Question 5.1 in 
\cite{cf:bdcutoff}, where an affirmative answer was given for the class of birth and death chains (which are Markov chains for which the set of edges $(x,y)$ 
with $P(x,y)>0$ forms a segment).
In fact, both cutoffs were shown to be equivalent to the {\em product condition}
\eqref{eq:product}.
\begin{atheorem}[Ding, Lubetzky and Peres  \cite{cf:bdcutoff},  Diaconis and Saloff-Coste \cite{cf:DSC06}]
\label{thm: bdcase}
A sequence of birth and death chains exhibits total variation cutoff iff it exhibits separation cutoff. 
\end{atheorem}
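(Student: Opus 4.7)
The strategy is to show that for birth-and-death chains both cutoffs are individually equivalent to the \emph{product condition}
\begin{equation*}
(1-\lambda_2^{(n)})\, \mixn \ \longrightarrow\ \infty,
\end{equation*}
and to combine these two equivalences to obtain the stated biconditional.

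The easier implications are that cutoff in either sense forces the product condition; these hold for every reversible lazy chain. Indeed, one has $t_{\mathrm{rel}}^{(n)}\log\frac{1}{2\epsilon} \le \mixn(\epsilon)$, so if the product condition failed then some ratio $\mixn(\epsilon)/\mixn(1-\epsilon)$ would stay bounded away from $1$; the chain of inequalities $\mix \le \sep \le 2\mix(1/8)$ from \eqref{eq:TVsepasymeqiv} then transfers the obstruction to the separation times.

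For the converse I would treat the two cutoffs separately. For the separation side, the key input is the Diaconis--Fill representation: when a birth-and-death chain is started from an endpoint, call it $0$, the hitting time $T$ of the opposite endpoint is a strong stationary time, and in fact the separation distance from $0$ at time $t$ equals $\Pr_0[T>t]$. By a classical theorem of Keilson (later refined by Fill and Miclo), $T$ has the law of a sum of independent exponential (resp.\ geometric, in the discrete-time setting) random variables whose rates are precisely $1-\lambda_i^{(n)}$ for $i\ge 2$. Concentration of such a sum around $\bE_0[T]$ is equivalent to its variance being $o(\bE_0[T]^2)$, and the bound $\Var(T)=\sum_{i\ge 2}(1-\lambda_i^{(n)})^{-2}\le t_{\mathrm{rel}}^{(n)}\bE_0[T]$ reduces this exactly to the product condition. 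A short symmetry argument then shows that one of the two endpoints realises the worst-case separation distance up to $o(1)$, so separation concentration from the endpoints upgrades to full separation cutoff.

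For the total-variation direction, the Ding--Lubetzky--Peres argument locates a ``cut-vertex'' $v_n$ whose hitting time from either endpoint concentrates around the worst-case TV mixing time: before $(1-\epsilon)\bE_x[T_{v_n}]$ the chain is essentially trapped on the wrong side of $v_n$, keeping the TV distance close to $1$, while after $(1+\epsilon)\bE_x[T_{v_n}]$ a coupling that synchronises two chains once both have visited $v_n$ drives the TV distance to $o(1)$. The concentration of $T_{v_n}$ again follows from a Keilson-type representation together with the product condition. The principal obstacle lies in this half: unlike the separation distance, the TV distance is not a tail probability of a hitting time, so reducing the TV cutoff question to one about the single random variable $T_{v_n}$ requires exploiting the linear order of the state space quantitatively---specifically, that every trajectory between opposite sides of $v_n$ must pass through $v_n$---to justify the coupling and trapping statements above.
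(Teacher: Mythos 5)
The first thing to say is that the paper itself contains no proof of Theorem \ref{thm: bdcase}: it is quoted from \cite{cf:bdcutoff} and \cite{cf:DSC06}, and the only guidance the text gives is the remark immediately before it that both cutoffs were shown in those works to be equivalent to the product condition \eqref{eq:product}. Your proposal reconstructs exactly that route --- necessity of \eqref{eq:product} from either cutoff via the spectral lower bound $\mixn(\gep)\ge(\reln-1)\log\tfrac{1}{2\gep}$ (note this direction also needs laziness/aperiodicity and $\mixn\to\infty$, cf.\ Fact \ref{fact: cutoffandtrel}), sufficiency for separation cutoff following Diaconis--Saloff-Coste, and sufficiency for total-variation cutoff following the Ding--Lubetzky--Peres hitting-time argument --- so in structure it is the intended (cited) argument rather than a new one.

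One step, however, is wrong as written. For the chain started at an endpoint $0$, the hitting time of the opposite endpoint $n$ is \emph{not} a strong stationary time (at that time the chain sits at $n$ with probability one), and its tail does not equal the separation distance: already the lazy two-state chain has $\dsep(1)=0$ while $\Pr_0[T_1>1]=1/2$. The object actually used in \cite{cf:DSC06} is the absorption time of the Diaconis--Fill strong stationary dual started at $0$ (equivalently, the fastest strong stationary time from $0$); it is \emph{this} time whose law is a convolution of independent geometrics with parameters $1-\lambda_i^{(n)}$, $i\ge 2$ (laziness guaranteeing a nonnegative spectrum), and whose tail equals the separation distance from $0$. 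The genuine endpoint-to-endpoint hitting time has instead the law described by Theorem \ref{thm: bdhit}, with parameters drawn from the spectrum of the chain absorbed at $n$ --- a different set of rates --- so the two random variables must not be conflated. Once $T$ is taken to be the dual absorption time, your reduction goes through: $\Var(T)=\sum_{i\ge2}\lambda_i^{(n)}(1-\lambda_i^{(n)})^{-2}\le \reln\,\bE[T]$ yields concentration exactly under \eqref{eq:product}, and the endpoints realize the worst-case separation exactly (by reversibility $P^t(0,n)/\pi(n)=P^t(n,0)/\pi(0)$), not merely up to $o(1)$. Your sketch of the total-variation half is a fair summary of \cite{cf:bdcutoff} (there the special state is a quantile of $\pi$ rather than a ``cut-vertex'', and concentration of its hitting time again comes from a sum-of-geometrics representation for the absorbed chain together with a bound of the form $\Var\le \rel\cdot\bE$), so the defect is one of identifying the correct random variable on the separation side, not of overall strategy.
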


In this note we give a negative answer to that question in general by constructing counter-examples.

\begin{theorem}\label{maintheorem}

 \begin{itemize}
 \item[(i)] Total-variation and separation cutoff are not equivalent for lazy reversible Markov chains and
 neither one implies the other.
 \item[(ii)] The above statement remains true within the class of lazy simple random walks on graphs of maximal degree at most 7.
 \end{itemize}
\end{theorem}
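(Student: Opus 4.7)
The plan is to construct two separate sequences of reversible lazy Markov chains, one exhibiting total-variation cutoff without separation cutoff and the other exhibiting separation cutoff without total-variation cutoff; both will be realized as lazy SRW on graphs of bounded degree, taking care of part~(ii) simultaneously.

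For \textbf{TV cutoff without separation cutoff}, I would start from a sequence $G_n$ of $3$-regular expanders for which lazy SRW is known to exhibit TV cutoff at time $t_n \asymp \log n$. Modify each $G_n$ by attaching, at a chosen vertex $v_n$, a ``pendant'' subgraph $R_n$ whose total stationary mass $\pi(R_n)$ vanishes but which contains a distinguished vertex $v_n^{*}$ that is hit only slowly from typical starting points (e.g., take $R_n$ to be a path of length $L_n$ with $L_n \to \infty$ sufficiently slowly). Since $\pi(R_n) = o(1)$, the worst-case TV distance is essentially determined by mixing inside $G_n$ and cutoff persists. On the other hand, for the pair $(x, v_n^{*})$ with $x$ far from the pendant, the ratio $P^{t}(x, v_n^{*}) / \pi(v_n^{*})$ stays small and decays polynomially over a wide window of times, so the separation distance is driven by this slow, smooth decay, ruling out separation cutoff. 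All modifications keep the maximum degree at most $4$.

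For \textbf{separation cutoff without TV cutoff}, I would take two bounded-degree expanders $G_n^L$ and $G_n^R$ of comparable sizes and join them through a long bridge (a path of length $L_n$ with $L_n^2 \gg \log n$). A walker started in $G_n^L$ sees its law equilibrate inside $G_n^L$ in time $\Theta(\log n)$, leaving the TV distance to $\pi$ at a plateau near $\pi(G_n^R) \approx 1/2$; the remaining TV decay occurs only once mass traverses the bridge, on a much longer scale. The separation of these two TV time scales precludes TV cutoff (the TV distance drops in two distinct waves). However, the separation distance is governed by the slowest pair $(x,y)$ on opposite sides of the bridge, for which $P^{t}(x,y)/\pi(y)$ is nearly zero until the bridge-crossing time; by designing the bridge so that this crossing time is sharply concentrated (exploiting concentration of diffusion times on long paths), separation admits a single sharp transition, i.e.\ cutoff.

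For part~(ii), the constructions use only $3$-regular expanders together with bridges, paths and pendant trees, so the maximum degree remains bounded by a small constant, comfortably below $7$. The main obstacle is the quantitative control of both distances at the relevant time scales simultaneously. In the second construction this is particularly delicate: the inequality $d(t) \leq d_{\mathrm{sep}}(t)$ from \eqref{eq:TVsepasymeqiv} makes the coexistence of separation cutoff with TV pre-cutoff counter-intuitive, and the argument must cleanly isolate the mechanism producing the TV plateau (a geometric mass-distribution mismatch visible immediately after intra-side mixing) from the mechanism producing the sharp separation transition (a concentrated bridge-crossing time). For the first construction the delicate point is tuning $R_n$ so that its contribution to TV stays $o(1)$ over the entire window where the separation distance decays smoothly.
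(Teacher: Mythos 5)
Both of your constructions have gaps that I believe are fatal, and the second one fails outright. A dumbbell --- two expanders of comparable size joined by an unbiased path of length $L_n$ --- has a bottleneck: the set consisting of one expander plus half the bridge has conductance of order $1/(nL_n)$, so $\reln \asymp \mixn \asymp nL_n$ and the product condition fails. By Fact~\ref{fact: cutoffandtrel} (pre-cutoff in either distance forces $\reln=o(\mixn)$), such a sequence cannot exhibit separation cutoff, so the desired conclusion is impossible for this geometry. Concretely, the time for a walker started on the left to first appear on the right is not a concentrated ``bridge-crossing time'': the walk must make a geometric number of excursions before one of them crosses the bridge (each attempt succeeds with probability $\Theta(1/L_n)$), so this time is approximately exponential with mean $\Theta(nL_n)$; moreover the traversal time of an unbiased path is diffusive, with fluctuations of the same order as its mean, so ``concentration of diffusion times on long paths'' is false. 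Hence $P^t(x,y)/\pi(y)$ for opposite-side pairs rises smoothly on the scale $nL_n$ and there is no separation cutoff. The paper's Example~\ref{ex: 3} avoids exactly this trap: the spectral gap is kept bounded away from zero (condition \eqref{star}), total-variation cutoff is destroyed not by a bottleneck but by Aldous-type parallel branches of different speeds (hitting times are sums of order-$n$ independent increments, hence concentrated along each branch), and separation cutoff arises from a different mechanism altogether: $P_n^t(a,b)/\pi_n(b)$ is governed by $\mathbb{P}[T_{z'}^{a,b}\le t]/\pi_n(z')$ with $\pi_n(z')$ exponentially small, so the transition is a large-deviation threshold crossing and is abrupt (window $O(1)$ in the SRW version, \S~\ref{sec: Rem 1.6}).

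Your first construction cannot be tuned to work either. If the pendant path has length $L_n$ with $L_n^2=o(\log n)$ --- which is forced, since the exit time from the pendant tip $v_n^*$ is $\Theta(L_n^2)$ with non-concentrated diffusive fluctuations and would otherwise destroy the worst-case total-variation cutoff --- then everything the pendant contributes to the separation profile lives on the scale $O(L_n^2)=o(\sepn)$: by Lemma~\ref{lem: CS}, $P^{2t}(x,v_n^*)/\pi(v_n^*)\ge\bigl(1-d_x(t)-d_{v_n^*}(t)\bigr)^2$ is close to $1$ as soon as $t$ exceeds $(1+o(1))\mixn$, so the pair $(x,v_n^*)$ cannot hold the separation distance up over a window comparable to $\sepn\asymp\log n$; the claimed ``polynomial decay over a wide window'' does not happen on the relevant scale, and the separation profile is then dictated by the ambient expander family, over which you have no control (absence of separation cutoff must be proved, not merely left open). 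The paper's route (Examples~\ref{ex: 1}, \ref{ex: 2} and, for SRW, \ref{ex: 4}) is structurally different: a state or balanced set $z$ carrying a \emph{constant} fraction of the stationary mass separates two arms, so that $d^{(n)}(t)\approx\Pr_a[T_z>t]$ (concentrated, giving TV cutoff via Proposition~\ref{prop: crittv}) while $\dsepn(t)\approx\mathbb{P}[T_z^{a,b}>t]$ (Proposition~\ref{prop: critsep}), and the $b$-arm is modified (Pak-style shortcuts to $z$, or Aldous' two-speed branches) so that $T_z$ started from $b$ is non-concentrated on the scale of $\mixn$ itself --- that is what genuinely kills separation cutoff while preserving the TV one. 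Finally, for part (ii) the degree bound is the easy part; the real work (which your sketch does not address) is making the hitting-time reductions valid for SRW: symmetrizing so that the target set is balanced (Definition~\ref{def: hitting}), stretching edges so the worst starting points lie in the symmetric region, and handling all other starting points by $\ell_2$/Cheeger estimates as in \S~\ref{sec: LSRW}.
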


\begin{remark}
We can also produce non-reversible or non-lazy counter-examples by performing artificial modifications in our chains, but this is not a very 
important point. Non-lazy or non-reversible chains can have very pathological behavior and we want to 
underline that we are not using ``unfair tricks" to produce our counter-examples.
\end{remark}

Of course a full proof of this statement only requires two counter-examples as (ii) is a stronger statement than (i).
However, we have chosen to include also examples that are not simple random-walks because they are much simpler.
We present a total of five counter-examples.
Apart from the first one,
they are all lazy (weighted nearest-neighbor) random walks on bounded degree graphs, with transition rates which are bounded away from zero.
The last two example, which are a bit more technical to analyze, are lazy SRWs on a sequence of bounded degree graphs 
$G_n:=(V_n,E_n)$ (i.e.~$\sup_n \max_{v \in V_n}\deg (v) < \infty$).

\medskip

Note that for all our counter-examples 
the graph supporting the transitions contains some cycles. An interesting open problem is to determine whether Theorem \ref{thm: bdcase} can extended to the case of lazy weighted nearest-neighbor random walk on trees 
for which it is already known (cf.~\cite{cf:Basu}) that separation cutoff implies total-variation cutoff.

%
%

\medskip

A sequence of Markov chains is said to display  \emph{\textbf{pre-cutoff}} (in total-variation resp. separation) if 
\begin{equation*}
 \sup_{0<\epsilon<1/2} \limsup_{n\to \infty} \mixneps / \mixn(1-\eps)< \infty \text{ resp. } \sup_{0<\epsilon<1/2} \limsup_{n\to
\infty} \sepneps / \sepn(1-\eps)< \infty.
\end{equation*}
We call the value of the $\sup$ above the \emph{\textbf{pre-cutoff ratio}}.
 Equation \eqref{eq:TVsepasymeqiv} implies that
\begin{equation}\label{groto}
 \sup_{\gep \in (0,1/2]}\limsup_{n\to \infty} \sepn(\eps)/\sepn (1-\eps) \le 2  \sup_{\gep \in (0,1/2]} \limsup_{n\to \infty} \mixneps /\mixn(1-\eps).
\end{equation}
A symmetrized version of this inequality also holds provided that $\mixn$ goes to infinity 
(the assumption being present just to avoid pathological behavior)
\begin{equation}\label{groto2}
 \sup_{\gep \in (0,1/2]} \limsup _{n\to \infty} \mixneps / \mixn(1-\eps)\le 2\sup_{\gep \in (0,1/2]}\limsup_{n\to \infty} \sepn(\eps)/\sepn (1-\eps).
\end{equation}
 The proof of \eqref{groto2} involves more computation than \eqref{groto}. We present a complete proof of it in Appendix 
\ref{sec: groto2})

These two inequalities imply that the notion of pre-cutoff is equivalent for the two distances and the pre-cutoff ratio of one is at most twice 
that of the other. In particular, cutoff in one distance implies pre-cutoff with ratio at most $2$ in the other.
With our examples, we shall show that this is in fact sharp in some cases:

\begin{remark}\label{rem:ration}
There exists a sequence of lazy reversible Markov chains for which we have cutoff in total-variation and only pre-cutoff with ratio $2$ in separation and vice-versa.
\end{remark}

\medskip

Our last point of comparison between total-variation mixing and separation mixing is related to the width of the
\emph{\textbf{cutoff window}}. We say that a sequence of chains exhibits total-variation (resp.~separation) cutoff with a cutoff window 
$w_n$ if $w_n=o(\mixn)$ and for all $0<\gep \le 1/4$ there exists some constant $C_{\gep}>0$ (depending only on $\gep$) such that 
$$\forall n, \quad \mixn(\gep)-\mixn(1-\gep) \le C_{\gep}w_n \quad (\text{resp. }\sepn(\gep)-\sepn(1-\gep) \le C_{\gep}w_n).$$
Note that the window defined in this manner is not unique, but informally ``the'' cutoff window is given by the ``smallest such $w_n$''.
Our examples demonstrate that the cutoff windows for total-variation and separation do not have the same behavior. 
 
The following result is due to Chen and Saloff-Coste \cite[Theorem 3.4]{cf: Chen2}. We present a much simpler proof in the Appendix.
\begin{atheorem}
\label{thm: window}
Let $(\Omega_n,P_n,\pi_n)$ be a sequence of lazy irreducible finite chains which exhibits total-variation cutoff with a cutoff window $w_n$. 
Then $w_n=\Omega (\sqrt{\mixn})$.
\end{atheorem}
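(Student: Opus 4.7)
The plan is to reduce everything to a single regularity estimate: for every lazy chain
\[
|d(t)-d(t+s)| \;\leq\; C\,s/\sqrt{t},
\]
with $C$ a universal constant. Granted this, assume toward contradiction that $w_n = o(\sqrt{\mixn})$ (the case $\mixn = O(1)$ is either trivial or vacuous, since any cutoff window must satisfy $w_n = o(\mixn)$). Applying the window definition with $\epsilon = 1/8$ yields some $C_0$ with $\mixn(1/8) - \mixn(7/8) \leq C_0 w_n$. Setting $t_n := \mixn(7/8)$, we have $d^{(n)}(t_n - 1) > 7/8$ and $d^{(n)}(t_n + C_0 w_n) \leq 1/8$, and the regularity estimate gives
\[
\tfrac{3}{4} \;<\; \frac{C\,(C_0 w_n + 1)}{\sqrt{t_n - 1}}.
\]
Since cutoff forces $t_n/\mixn \to 1$ and $\mixn \to \infty$, this yields $w_n \geq c\sqrt{\mixn}$ for large $n$, contradicting the assumption.

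For the regularity estimate, the idea is to exploit laziness by writing $P = (I + Q)/2$ for a stochastic matrix $Q$. Binomial expansion gives
\[
P^t(x,\cdot) \;=\; \sum_{k=0}^{t} \mathbb{P}(B_t = k)\, Q^k(x,\cdot), \qquad B_t \sim \mathrm{Bin}(t, 1/2),
\]
so $P^t(x,\cdot)$ and $P^{t+s}(x,\cdot)$ are mixtures of the same components $Q^k(x,\cdot)$ with different weights. The standard mixture inequality for total variation then gives
\[
\|P^t(x,\cdot) - P^{t+s}(x,\cdot)\|_{\mathrm{TV}} \;\leq\; \|B_t - B_{t+s}\|_{\mathrm{TV}},
\]
and combining with the reverse triangle inequality for $\|\cdot\|_{\mathrm{TV}}$ and taking a maximum over $x$ yields $|d(t) - d(t+s)| \leq \|B_t - B_{t+s}\|_{\mathrm{TV}}$.

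It then remains to prove the elementary bound $\|B_t - B_{t+s}\|_{\mathrm{TV}} \leq C s/\sqrt{t}$. Conditioning on the last flip gives $\mathbb{P}(B_{t+1}=k) = \tfrac{1}{2}\bigl(\mathbb{P}(B_t = k) + \mathbb{P}(B_t = k-1)\bigr)$, so
\[
\|B_t - B_{t+1}\|_{\mathrm{TV}} \;=\; \tfrac{1}{4}\sum_k \bigl|\mathbb{P}(B_t = k-1) - \mathbb{P}(B_t = k)\bigr|.
\]
The unimodality of $k \mapsto \mathbb{P}(B_t = k)$ makes the right-hand side telescope to $2\max_k \mathbb{P}(B_t = k) = 2\binom{t}{\lfloor t/2\rfloor}2^{-t} = O(1/\sqrt{t})$ by Stirling, and a triangle inequality over $s$ consecutive unit steps extends this to the stated bound.

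The main obstacle is recognizing the mixture decomposition that laziness provides; once this is identified, both the binomial estimate and the cutoff contradiction reduce to routine calculation. This route avoids spectral theory entirely and is substantially shorter than the original $\ell_2$-based argument of Chen and Saloff-Coste.
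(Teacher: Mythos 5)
Your proof is correct, and it is essentially the same argument as the paper's appendix proof of Theorem \ref{thm: window}: laziness lets you realize the time-$t$ distribution as the non-lazy chain run for $\xi_t \sim \mathrm{Bin}(t,1/2)$ steps, whence $d(t)-d(t+s)\le \|\xi_t-\xi_{t+s}\|_{\mathrm{TV}}=O(s/\sqrt{t})$, and the window definition finishes the job. The only differences are cosmetic — you spell out the binomial shift estimate (which the paper dispatches with ``if $c$ is chosen well'') and argue by contradiction at $\eps=1/8$ rather than proving $\mix(1/4)-\mix(3/4)\ge c\sqrt{\mix(3/4)}$ directly — so this is the paper's coupling proof, not a new alternative to the $\ell_2$ argument of Chen and Saloff-Coste.
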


The bound given by Theorem \ref{thm: window} is obviously sharp for the biased random walk on a segment. 
Conversely, some very standard Markov chains like the lazy SRW on the $n$-dimensional hyper-cube have a cutoff window $w_n>> \sqrt{\mixn}$ (here $w_n=n$ and $\mixn=(\frac{1}{2}\pm o(1) )n\log n$). As indicated in Remark \ref{rem: 1.6} the laziness assumption in Theorem \ref{thm: window}  can be replaced by the assumption that $\inf_{n} \min_{x \in \Omega_n} P_n^2(x,x)>0$ (as is the case for simple random walk on a sequence of bounded degree graphs). 


\medskip

In light of Theorem \ref{thm: window} one might expect that whenever separation cutoff occurs for a sequence of discrete-time lazy chains, the width of the separation cutoff window is $\Omega(\sqrt{\sepn})$. We are unaware of any previously analyzed example in which this fails. We find it remarkable that as the following remark asserts, the width of the separation cutoff window for a sequence of discrete-time lazy SRWs on a sequence of bounded degree graphs, can in fact be a constant! This, or more precisely, the mechanism that allows such behavior (see \S~\ref{sec: idea2} for more on this point) demonstrates that the separation distance can exhibit profoundly different behaviors than the total variation distance.

Our counter-examples show that the cutoff window in one distance can be as small as allowed even if there is no cutoff for the other distance:


\begin{remark}
\label{prop: window}
We will construct sequences of bounded degree graphs such that the corresponding sequences of lazy SRWs exhibit the following behaviors (resp.) 
\begin{itemize}
\item[(i)] There is no separation cutoff but there is total-variation cutoff with window $\sqrt{\mixn}$.
\item[(ii)] There is no total-variation cutoff but there is separation cutoff with window $1$.
\end{itemize}
In \S~\ref{sec: idea2}  we refine the statement of (ii) and describe further surprising properties of the relevant example for (ii) above (listed in  \S~\ref{sec: idea2}   as properties $(i)$-$(v)$).
\end{remark}

\begin{remark}
Let $\delta_{n} \in (0,1)$. We call a sequence of discrete time chains $(\Omega_{n},P_{n},\pin)$,  $\delta_n$-lazy if for all $n$, $P_n(x,x)\ge \delta_n$ for all $x \in \Omega_n$. 
It is not hard to extend the proof of Theorem \ref{thm: window} and show that if a sequence of $\delta_n$-lazy chains exhibits total-variation cutoff with a
window $w_n$, then $w_n=\Omega (\sqrt{\delta_n(1-\delta_n) \mixn})$. 

Theorem \ref{thm: window} can also be extended to the continuous time setup, with the additional assumption that the sum of the transition rates 
from any given state is bounded above by 1 (or by some absolute constant).
\end{remark}
\begin{remark}
\label{rem: 1.6}
Let $G_n=(V_n,E_n)$ be a sequence of connected non-bipartite simple graphs of maximal degree $d_n$. Consider the sequence of (non-lazy) SRWs on $G_n$. Then $P_n^2(v,v) \ge 1/d_n$, for every $v \in V_n$. By considering $P^2$ rather than $P$ it follows from the previous remark that if the sequence exhibits total-variation cutoff with a window $w_n$, then $w_n=\Omega (\sqrt{\mixn/d_n})$. This is in fact sharp by considering a sequence of random $d_n$-regular graphs of size $n$ for some $d_n$ such that $\lim_{n \to \infty}d_n = \infty$ and $d_n=o(\frac{\log n}{\log \log n})$ \cite[Theorem 3]{cf:LS3}. 
\end{remark}

%

%
%
%

\subsection{Organization of the note.}

In \S~\ref{sec: overview} we describe the construction of our examples and our general strategy. 
We also describe relevant examples due to Aldous and Pak.  

In \S~\ref{sec: Pre} we introduce a general framework, which under a certain condition, allows to reduce the study of the mixing-time to the study of 
the hitting time of a special point.

In \S~\ref{sec: TVcutoff} 
we describe two examples of sequences of Markov chains which exhibit total-variation cutoff 
but do not exhibit separation cutoff. The first example, Example \ref{ex: 1}, demonstrates that \eqref{groto} may be sharp (even when the r.h.s.~of \eqref{groto} equals 1). 
The second example, Example \ref{ex: 2}, is a weighted nearest neighbor random walk on a bounded degree graph with transition probabilities which are bounded away from 0 and 1. 

In \S~\ref{sec: Sepcutoff} we construct an example of a sequence of Markov chains that exhibits separation cutoff but no total-variation cutoff 
(Example \ref{ex: 3}).

Finally, in \S~\ref{sec: LSRW} we transform Examples \ref{ex: 2} and \ref{ex: 3} into examples of sequences of lazy SRWs on bounded degree Expander graphs. The reason we first describe Examples \ref{ex:
2} and \ref{ex: 3} is that the key ideas of our constructions are more transparent in theses examples.
\section{An overview of the main ideas of our constructions}\label{sec: overview}

\subsection{A very basic chain with different cutoff times for separation and total variation}

In this section we settle with a high-level description of some key ideas. 
Let us first present a very simple Markov chain which exhibits cutoff in both distances (see Figure \ref{fig:basicchain}) but for which the mixing-time in separation is twice as large as
that in total variation.

\medskip

Consider a random walk on a segment $a,b$ of length $2n$ which presents a constant bias towards the middle point which we call $z$ (see Figure \ref{fig:basicchain}).
Most of the equilibrium measure is concentrated on a small neighborhood of $z$ and for this reason (cf.\ Proposition \ref{prop: crittv}) the total-variation mixing-time corresponds to the time which is needed to hit $z$ (starting from either of the end-points). The system displays cutoff because this hitting time is concentrated around its mean.

\begin{figure}[h]
\begin{center}
\leavevmode
\epsfxsize =12 cm
\psfragscanon
\psfrag{a}{\tiny $a$}
\psfrag{b}{\tiny $b$}
\psfrag{z}{\tiny $z$}
\psfrag{1/3}{\tiny $1/3$}
\psfrag{1/6}{\tiny $1/6$}
\psfrag{1/4}{\tiny $1/4$}
\psfrag{1/2}{\tiny $1/2$}
\epsfbox{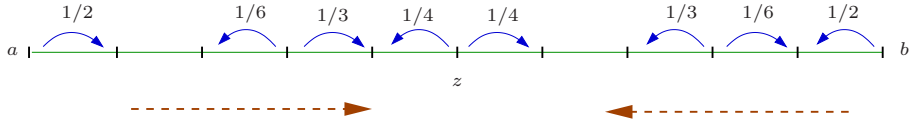}
\end{center}
\caption{\label{fig:basicchain} 
A very simple chain for which the separation mixing-time is twice as large as the total-variation mixing-time ($6n$ and $12n$, respectively).
The transition rates (apart from at the special states $a$, $b$ and $z$) are $1/3$ in the $z$ direction and $1/6$ in the opposite one (the holding probability is $1/2$), making the 
chain travel at speed $1/6$ towards $z$.}
\end{figure}
\medskip

The separation mixing-time on the other hand is twice as large. Roughly speaking, this is because for $P^t(a,b)$ to come close to its equilibrium value, 
``information" has to 
pass from one end to the other. The time required for this to occur corresponds more or less to the sum of the times needed to reach $z$ from $a$ and $b$, respectively 
(see Proposition \ref{prop: critsep}).

\medskip

This scheme with two extremal opposite initial conditions, though not ubiquitous among Markov chains, appears in many natural examples for which cutoff has been proved: e.g.\ 
the lazy SRW on the hyper-cube (see \cite[Theorem 18.3]{cf:LPW}),
the Ising model at high temperature \cite{cf:LS2} or the adjacent-transposition shuffle on the segment \cite{cf:Lac}.

\subsection{An idea to avoid cutoff in separation while keeping that in total-variation}
\label{s: idea1}

Our idea to produce counter-examples with total-variation cutoff but only pre-cutoff in separation 
is to modify the structure (state space and transition rates) of the simple chain above (Figure \ref{fig:basicchain}),  only on one side (say, the side of $b$), 
to break the symmetry. To be precise, in Example \ref{ex: 2} we first set the holding probabilities on both sides to be $3/4$ (and consider the obtained chain as the ``original chain", as opposed to Example \ref{ex: 1}, for which the chain in Figure \ref{fig:basicchain} serves as the ``original chain") before modifying the $b$-side. We want to perform our modifications in the following manner:
\begin{itemize}
 \item We want to keep the property that every path from $a$ to $b$ goes through $z$, which shall still bear a positive proportion of the 
 equilibrium mass.
 \item We want $a$ to remain the initial condition from which it takes the longest time to reach equilibrium (equivalently, to hit $z$). More precisely,  we want that also after the modification, the distribution of the hitting time of $z$, $T_z:= \inf\{ t \ :X_t=z\}$, starting from $a$ would still stochastically dominate the distribution of $T_z$, starting from any other initial state. Moreover, we want the hitting time distribution of $z$, starting from any state between $a$ and $z$ (including $a$), to remain un-changed.
 \item We want the hitting time  of $z$ from initial state $b$, to become non-concentrated, and to remain of the same order of magnitude as
 the mixing-time of the whole chain. Moreover, we want this hitting time to remain (stochastically) larger than the hitting time  of $z$, starting from any other state which lies between $b$ and $z$, and to become stochastically dominated by the hitting time distribution of $z$ (from $b$) in the original chain (which equals the hitting time distribution from $a$ in the modified chain).
 \end{itemize} 
In this manner, the hitting time distribution of $z$ under $\Pr_a$ remains un-changed (and in particular, remains concentrated). Moreover, after the  modification it is still the case that $d(t) \approx \Pr_a[T_z>t]$,  and thus by the aforementioned concentration there is still cutoff in total-variation (see Proposition \ref{prop: crittv}).   Using Proposition \ref{prop: critsep},
we deduce that $\dsep(\mix+t) \approx \Pr_b[T_z>t] $ and so there is no cutoff in separation as the hitting time distribution of $z$ under $\Pr_b$ in the modified chain is no longer concentrated. 

To perform such a modification, we borrow ideas from previous constructions of Pak (for Example \ref{ex: 1}) and Aldous (for Example \ref{ex: 2}), which we present now.

\subsection{Related Constructions}

When the product condition (Definition \ref{def: productcondition}) was shown to be a necessary condition for cutoff, it was conjectured 
that it should also be a sufficient one for ``nice" chains. However, two counter-examples constructed, respectively by Aldous and Pak 
(see \cite[Example 8.1]{cf:Basu}, \cite{cf:Chen} and  \cite[Chapter 18]{cf:LPW}  for a more detailed description and analysis), show that in general the product condition does not imply cutoff. 
The mechanisms used  to prevent cutoff in those two constructions are of different nature.
\begin{itemize}
\item Aldous' example (Figure \ref{fig:aldous}) locally looks like a biased random walk on a segment, 
so that most of the equilibrium measure is concentred on a small neighborhood of the end-point towards which the walk is biased (we call this end of the segment  $z$  and the opposite one $b$).
To avoid cutoff, the half of the segment closer to $z$ is split into two distinct parallel branches. The transition rates on these branches are tuned so that 
there is still a bias towards $z$ but such that one path is slower than the other.
Starting furthest away from equilibrium (i.e.\ at state $b$) we have two possible scenarios to reach $z$ given by the two distinct branches and  
the probability of each is bounded away from $0$ and $1$.
As the speed along the two branches is different, the CDF of the hitting time distribution of $z$ starting from $b$ has two abrupt jumps. Consequently, $d^{(n)}(t)$ exhibits two distinct abrupt drops and there is no cutoff. 
\item  Pak's idea is to start with a sequence of chains which exhibits cutoff and to modify it by adding transitions
which are such that with a constant rate (which is chosen to be somewhere between the spectral gap and the inverse of the 
mixing-time of the original chain, say their geometric mean) the system is brought to equilibrium at once.
For the modified Markov chain, the total-variation distance decays (up to a negligible error) exponentially 
with the rate of the newly added transitions and hence cutoff does not occur, neither pre-cutoff.
\end{itemize}

\begin{figure}[h]
\begin{center}
\leavevmode
\epsfxsize =10 cm
\psfragscanon
\psfrag{a}{\tiny $a$}
\psfrag{b}{\tiny $b$}
\psfrag{z}{\tiny $z$}
\psfrag{1/3}{\tiny $1/3$}
\psfrag{1/6}{\tiny $1/6$}
\psfrag{1/12}{\tiny $1/12$}
\psfrag{1/8}{\tiny $1/8$}
\psfrag{1/4}{\tiny $1/4$}
\epsfbox{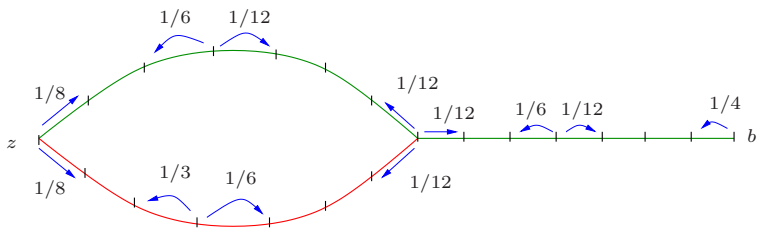}
\end{center}
\caption{\label{fig:aldous} 
A version of Aldous' example. The walk is always biased towards $z$ but the speed of the walk depends on the branch.
On the top branch, as well as on the rest of the segment, the transition rates are $1/6$ in the $z$ direction and $1/12$ in the opposite one
(the holding probability is $3/4$) whereas on the bottom branch the (exit) rates are twice as large (and the holding probability is $1/2$), resulting in a larger speed. As a result, two transitions occur for the total-variation distance 
at times $9n$ and $12n$ respectively, where $n$ denotes the total distance from $z$ to $b$ and the length of each of the two parallel branches is $\lceil n/2 \rceil $ (above $n=14$).
The rates at $b$, $z$ and at the branching point are not very relevant but we display them for the sake of concreteness.}
\end{figure}

\medskip

In our Example \ref{ex: 1} (see Figure \ref{fig:ex1}), we adapt Pak's idea: on the $b$-side (of the chain from Figure \ref{fig:basicchain}) we add transitions from states on the $b$-side to the center of mass $z$, 
and we choose the inverse of the rate to be of the same order as the mixing-time (which is of order of the length of the segment: $n$). This makes the hitting time of $z$ started from $b$ non-concentrated and (stochastically) smaller than 
started from $a$. Moreover, after this modification, all of the properties described in the beginning of \S~\ref{s: idea1} are satisfied.

\medskip

In our Example \ref{ex: 2}, (see Figure \ref{fig:ex2}), we simply replace the $b$-side by Aldous' construction, and set the holding probability on the $a$-side to be $3/4$ (which is the holding probability of the slow branch of  the $b$-side). After this modification, all of the properties described in the beginning of \S~\ref{s: idea1} are satisfied.

\subsection{An idea to keep cutoff in separation while avoiding that in total-variation}
\label{sec: idea2}

For this part we must rely on a different idea. What we want to alter in our chain is the way the separation distance shrinks to zero. Loosely speaking,
in the original chain on the segment, the separation mixing-time is determined by the sum of the hitting times of $z$ from $a$ and $b$ since
$z$ is the only channel of communication between the two extremities.

\medskip

Our construction (Example \ref{ex: 3}) relies on the following idea (see Figure \ref{fig:ex3}). We take the length of the line segment to be $2(M+1)n$ for some large (fixed) integer $M$.
\begin{itemize}
 \item We connect the two sides of the segment at a second point $z'$ which is far from the center of mass $z$. We do so by merging the two states which are of distance $n$ from $z$ (one on the $a$-side and one on the $b$-side) into a single state $z'$. This connection maintains the cutoff in separation.   However, it has the effect 
 of  shortening the separation cutoff time by some constant factor, while, as we now describe, drastically altering the nature of the abrupt transition of $\dsepn(t)$ around the (separation) cutoff time.
It follows from our analysis of Example \ref{ex: 3} and the refined analysis of Example  \ref{ex: 5} in \S~\ref{sec: Rem 1.6}, that provided that $M$ is taken to be sufficiently large:
\begin{itemize}
\item[(i)] Also after creating the connection at $z'$ we have that  $$\lim_{n \to \infty} \sup_t |\dsepn(t) - \max (0,1-P_n^t(a,b)/\pin(b))| =0.$$ 
\item[(ii)] Due to the connection of $A$ and $B$ at point $z'$, up to negligible terms, around the separation cutoff time, $P_n^{t}(a,b)$ is supported by trajectories which never get much closer to $z$ than $z'$ is, and so are contained in a set whose stationary probability is exponentially small in $n$.
\item[(iii)]
Let $T_{z'}^{a,b}$ (Definition \ref{def: convolution}) be a random variable distributed as a convolution of the hitting time distribution of $z'$ started from $a$ with that started from $b$ (in this case the two distributions are identical). Around the (separation) cutoff time, $P_n^t(a,b)/\pin(b)$ can be understood in terms of the behavior of  $T_{z'}^{a,b}$ in the large deviation regime (namely, the cutoff occurs around the time $t$ for which $\mathbb{P}[T_{z'}^{a,b} \ge t] \approx \pi_n(z')=\Theta(2^{-n}) $). 
\item[(iv)]
Around $\sepn$, $P_n^{t}(a,b)/\pin(b)$ grows exponentially in $t-\sepn$, for $t \ge \sepn$  (and decays exponentially for $t< \sepn $) and continues to do so for $\Theta(n)$ steps around $\sepn$
 (in particular, shortly after $\sepn$, $(a,b)$ no longer minimizes $P_n^t(x,y)/\pin(y) $). By (i), it follows that $w_n = 1$ is a (separation) cutoff window (and we can take $C_{\gep}=C |\log \gep| $, for some absolute constant $C$, for all $\gep \in (0,1/4]$).
\item[(v)]
 $\sup_{t} P_n^t(a,b)/\pin(b)=\Theta(\max_{t}\mathbb{P}[T_{z'}^{a,b} = t]/ \pi_n(z') )=\Theta(2^n/n) \to \infty $ as $n \to \infty$.
\end{itemize}

This behavior (namely, on the one hand having property ($i$) and on the other having properties ($ii$), ($iv$) and ($v$)) is atypical and quite surprising at first sight.

\medskip

We are not done yet, as after creating the connection at $z'$, there are two symmetric parallel distinct branches from $z'$ to the center of mass $z$, resulting in the hitting time of $z$ from either $a$ or $b$ being concentrated. Consequently, there is still cutoff in total-variation (as by Proposition \ref{prop: crittv}, $d(t) \approx \Pr_a[T_z>t]=\Pr_b[T_z>t]$). 
 
 \item   We break the symmetry (between the two branches, but not between $a$ and $b$)  in order to ``destroy" the cutoff in total-variation by making the speed along the two paths which link $z'$ to $z$ different
 as in Aldous' example (Figure \ref{fig:aldous}). Observe that as opposed to Examples \ref{ex: 1}-\ref{ex: 2}, here $a$ and $b$ play symmetric roles (the chain looks the same starting from either one of them).
 \end{itemize}
As one should expect from property ($ii$) above (provided that $M$ is sufficiently large), breaking the symmetry as described above does not influence the asymptotic
pattern of convergence in separation, and ($i$)-($v$) above remain valid.
However the quantitative analysis of this example turns out to be more intricate than that of the first two.

%
%
%

\subsection{Constructing counter-examples which are lazy SRW on bounded degree graphs}

It was observed by Peres and Wilson that the sequence of chains in Aldous'
example could be modified into a sequence of lazy SRWs on bounded
degree expander graphs (see Definition \ref{def: Cheeger}). In \cite{cf:LS} Lubetzky and Sly constructed explicit 3-regular expanders with total-variation cutoff.

We use similar ideas to transform our Examples \ref{ex: 2}-\ref{ex: 3} into SRWs on bounded degree graphs (Examples \ref{ex: 4}-\ref{ex: 5}). Our constructions includes one new idea: by introducing a sufficient amount of symmetry, (roughly speaking) we are able to reduce the analysis of Examples \ref{ex: 4}-\ref{ex: 5} to that of Examples \ref{ex: 2}-\ref{ex: 3}. Consequently, the analysis of the asymptotic convergence profile of $d_n(t)$ is simpler than in \cite{cf:LS} (at the cost of having maximal degree $\le 7$ rather than $3$).

\section{Preliminaries}
\label{sec: Pre}

The aim of this section is to introduce some general theory which shall reduce the analysis of our Examples \ref{ex: 1}-\ref{ex: 3} 
to the analysis of hitting time distributions of a specific state. The results appearing in this section are later generalized in \S~\ref{sec: generalizations} (these generalizations reduce the analysis of Examples \ref{ex: 4}-\ref{ex: 5} to the analysis of hitting time distributions of a specific set).  All proofs are deferred to the appendix. As we shall only prove the more general versions, we now describe the correspondence between the results of this section to the ones from \S~\ref{sec: generalizations}: Proposition \ref{prop: hitcutoff} corresponds to Proposition \ref{prop: crittv}, Lemma \ref{lem: pathsdecompositions} to Lemma \ref{lem: pathsdecomposition} and Proposition \ref{cor: sepcutoffcriterion} to Proposition \ref{prop: critsep}.

\medskip

Let us first introduce some notation and standard terminology.
Recall that if $(\Omega,P,\pi)$ is a finite  irreducible reversible Markov chain,
then $P$ is self-adjoint w.r.t.~the inner product induced by $\pi$ on $\R^{\Omega}$ 
\begin{equation}
 \langle f,g\rangle_{\pi}:=\sum_{x\in \gO} \pi(x)g(x)f(x).
\end{equation}
Hence it has $|\Omega|$ real eigenvalues satisfying
 $1=\lambda_1>\lambda_2 \ge \ldots \ge
\lambda_{|\Omega|} \ge -1$ (where $\lambda_2<1$ since the chain is irreducible
and if the chain is lazy then $\lambda_{|\Omega|} \geq 0$). Define its \emph{\textbf{relaxation-time}} as $t_{\mathrm{rel}}:=(1- \max (\lambda_2,|\gl_{|\Omega|}|))^{-1}$. Note that under laziness $\rel=(1-\gl_2)^{-1}$.

\begin{definition}
\label{def: productcondition}
We say that a family of reversible Markov chains satisfies the \emph{\textbf{product condition}}
 if
 \begin{equation}\label{eq:product}
 \lim_{n\to \infty}(1- \max ( \lambda_2^{(n)},|\gl_{|\gO|}^{(n)}|))t_{\mathrm{mix}}^{(n)} = \infty \quad (\text{equivalently, }t_{\mathrm{rel}}^{(n)}=o(t_{\mathrm{mix}}^{(n)}))\, .
 \end{equation}
\end{definition}
Because of the following well-known fact (e.g.~\cite[Proposition 18.4]{cf:LPW}), all our counter-examples satisfy the 
product condition.
\begin{fact}
\label{fact: cutoffandtrel}
For a sequence of irreducible aperiodic reversible Markov chains with relaxation-times
$\{t_{\mathrm{rel}}^{(n)} \}$ and mixing-times $\{t_{\mathrm{mix}}^{(n)}
\}$, if the sequence exhibits a pre-cutoff (either in total-variation or
separation) and $\lim_{n \to \infty}t_{\mathrm{mix}}^{(n)}
=\infty $, then $t_{\mathrm{rel}}^{(n)}=o(t_{\mathrm{mix}}^{(n)})$.
\end{fact}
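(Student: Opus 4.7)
The plan is to combine the standard spectral lower bound for reversible chains with the definition of pre-cutoff. Writing $\gl_* := \max(\gl_2, |\gl_{|\Omega|}|)$ so that $\rel = (1-\gl_*)^{-1}$, the spectral decomposition of the reversible kernel $P$ yields the classical inequality
\[
t_{\mathrm{mix}}(\gep) \;\geq\; (\rel - 1)\log\!\bigl(1/(2\gep)\bigr),
\]
valid for any irreducible aperiodic reversible chain (see e.g.\ \cite[Chapter 12]{cf:LPW}); one proves it by picking a unit eigenfunction $f$ associated with $\gl_*$ and a state $x$ maximizing $|f(x)|$, then bounding $\|P^{t}(x,\cdot) - \pi\|_{\TV}$ from below using $|P^{t}f(x)| = |\gl_*|^{t}|f(x)|$.

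First I would treat the case of total-variation pre-cutoff. By definition there exists a finite constant $C$ such that for every fixed $\gep \in (0, 1/2)$ and every sufficiently large $n$,
\[
\mixneps \;\leq\; C\, \mixn(1-\gep) \;\leq\; C\, \mixn,
\]
where the second inequality uses that $\mixn(\cdot)$ is non-increasing and $1-\gep \geq 1/2 \geq 1/4$. Combining with the spectral lower bound applied to the $n$-th chain yields $(\reln - 1)\log(1/(2\gep)) \leq C\,\mixn$, hence
\[
\limsup_{n\to\infty} \frac{\reln}{\mixn} \;\leq\; \frac{C}{\log(1/(2\gep))},
\]
using $\mixn \to \infty$ to absorb the ``$-1$''. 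Since this holds for every $\gep \in (0,1/2)$, letting $\gep \to 0^{+}$ gives $\limsup_{n} \reln/\mixn = 0$, i.e.\ $\reln = o(\mixn)$ as desired.

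For the case of separation pre-cutoff the cleanest route is to invoke inequality \eqref{groto2}: under the hypothesis $\mixn \to \infty$, it converts any separation pre-cutoff into a total-variation pre-cutoff (with ratio at most twice as large), and the argument of the previous paragraph then applies verbatim. No step presents a genuine obstacle; the only care needed is to fix $\gep$ before taking $n \to \infty$, so that the single pre-cutoff constant $C$ may be applied uniformly in $n$ for each such $\gep$.
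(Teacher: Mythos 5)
Your total-variation half is correct, and it is essentially the paper's own argument: the paper does not prove Fact \ref{fact: cutoffandtrel} but cites it as well known (Proposition 18.4 of \cite{cf:LPW}), whose proof is precisely the spectral lower bound $\mixneps \ge (\reln-1)\log\bigl(1/(2\eps)\bigr)$ combined with the pre-cutoff bound $\mixneps \le C\,\mixn(1-\eps) \le C\,\mixn$, with the same order of quantifiers ($n\to\infty$ at fixed $\eps$, then $\eps\to 0$) that you correctly observe.

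The separation half, however, has a genuine gap as written: you invoke \eqref{groto2}, but the paper's proof of \eqref{groto2} (Appendix \ref{sec: groto2}) assumes a separation pre-cutoff and then explicitly applies Fact \ref{fact: cutoffandtrel} to argue that the term $2\reln\log\gep^{-1}$ in \eqref{eq: mixcompa} is negligible. So, relative to the paper's logical structure, your route is circular: the separation case of the Fact is exactly the ingredient used to establish \eqref{groto2}. The fix is easy and makes \eqref{groto2} unnecessary. From the first line of \eqref{eq:TVsepasymeqiv} you get $\mixneps \le \sepneps$ (since $d(t)\le \dsep(t)$), and taking $t=2\mixn$ in $\dsep(t)\le 1-\bigl(1-\min(2d(t/2),1)\bigr)^2$ gives $\dsepn(2\mixn)\le 1-(1-\tfrac12)^2=\tfrac34$, i.e.\ $\sepn(1-\eps)\le \sepn(3/4)\le 2\mixn$ for every $\eps\le 1/4$. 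Hence, for fixed $\eps\in(0,1/4]$ and all large $n$,
\begin{equation*}
(\reln-1)\log\bigl(1/(2\eps)\bigr)\;\le\; \mixneps\;\le\;\sepneps\;\le\;(C+1)\,\sepn(1-\eps)\;\le\;2(C+1)\,\mixn,
\end{equation*}
where $C$ is the separation pre-cutoff ratio, and the conclusion $\reln=o(\mixn)$ follows exactly as in your total-variation argument by letting $n\to\infty$ and then $\eps\to 0$. With this replacement your proof is complete and non-circular.
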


Given $z\in \gO$ we let 
$$T_z:= \inf\{ t \ :X_t=z\}$$ 
denote the hitting time of $z$. The following result allows us to characterize the mixing-time of the chain in terms of 
the hitting time of a given point which carries a positive proportion of the mass.
As hitting times are sometimes easier to control than mixing-times, it will assist us in determining the total-variation profile of convergence to equilibrium
in Examples \ref{ex: 1}-\ref{ex: 3}.

\begin{proposition}\label{prop: crittv}
 Let $(\gO_n,P_n,\pi_n)$ be a sequence of lazy reversible irreducible finite Markov chains which satisfies the product condition.
 Let us furthermore assume that there exists $z_n\in \gO_n$ such that 
\begin{equation}
 \inf_{n} \pi_n(z_n)>0.
 \end{equation}
Then setting 
\begin{equation}
 \tau_n(p):=\inf \left\{ t \ : \  \max_{x\in \gO_n} \Pr_x[T_{z_{n}}>t]\le p \right\},
\end{equation}
we have for any $\gep< \gep' \in (0,1)$
\begin{equation}
\label{eq: hitcutoff0}
 \limsup_{n \to \infty}\frac{\mixn(\eps')}{\tau_n(\eps)} \le 1 \text{ and } \liminf_{n \to \infty}\frac{\mixn(\eps)}{\tau_n(\eps')} \ge 1.
 \end{equation}
\end{proposition}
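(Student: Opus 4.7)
The plan is to establish both inequalities in parallel, each time using the product condition to absorb an additive correction of order $\rel$. For $\mixn(\epsilon')\le (1+o(1))\tau_n(\epsilon)$, the idea is that once the chain reaches $z_n$ it relaxes to $\pi_n$ within $O(\rel)$ further steps: the assumption $\pi_n(z_n)\ge c>0$ keeps the $L^2$-norm of $\delta_{z_n}/\pi_n$ bounded uniformly in $n$, so the usual spectral contraction applies. For the reverse inequality, once the chain has reached total-variation distance $\epsilon$ from $\pi_n$ it can be coupled with a stationary copy, and the residual question is about the hitting time of $z_n$ from stationarity, which for reversible chains with $\pi_n(z_n)\ge c$ has mean $O(\rel)$.

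For the first inequality I would apply the strong Markov property at $T_{z_n}$ to write $P_n^t(x,\cdot)$ as a mixture of laws $P_n^{t-s}(z_n,\cdot)$, $s\le t$, plus a correction carried by $\{T_{z_n}>t\}$. Splitting the mixture at $s=t-r$ yields
\begin{equation*}
\|P_n^t(x,\cdot)-\pi_n\|_{\mathrm{TV}}\ \le\ \Pr_x[T_{z_n}>t-r]+\max_{u\ge r}\|P_n^u(z_n,\cdot)-\pi_n\|_{\mathrm{TV}}.
\end{equation*}
The standard $L^2$-spectral estimate gives $\|P_n^u(z_n,\cdot)-\pi_n\|_{\mathrm{TV}}\le \tfrac12\lambda_2^u\sqrt{1/\pi_n(z_n)-1}\le \tfrac12\lambda_2^u\sqrt{1/c-1}$, so one can pick $r=C(c,\epsilon'-\epsilon)\rel$ to make this term at most $\epsilon'-\epsilon$. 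Then at $t=\tau_n(\epsilon)+r$ the first summand is $\le\epsilon$, hence $\mixn(\epsilon')\le \tau_n(\epsilon)+O(\rel)$. The product condition gives $\rel=o(\mixn)$, and a short contradiction argument shows $\tau_n(\epsilon)\to\infty$ (otherwise the above estimate forces $\mixn=O(\rel)$, contradicting $\rel=o(\mixn)$), so the additive $O(\rel)$ is $o(\tau_n(\epsilon))$ and the first bound follows.

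For the reverse inequality I would set $s:=\mixn(\epsilon)$ and take a maximal coupling of the chain started at $x$ with a stationary copy $Y$, with $\Pr[X_s=Y_s]\ge 1-\epsilon$, extended so that on the coincidence event the two trajectories remain identical forever after. Comparing hitting times and using the stationarity of $Y$ yields
\begin{equation*}
\Pr_x[T_{z_n}>s+t]\ \le\ \epsilon+\Pr_{\pi_n}[T_{z_n}>t].
\end{equation*}
Combined with the classical reversible bound $\mathbb{E}_{\pi_n}[T_{z_n}]\le \rel/\pi_n(z_n)\le \rel/c$ and Markov's inequality, this gives $\Pr_{\pi_n}[T_{z_n}>t]\le \rel/(ct)$, which is at most $\epsilon'-\epsilon$ for $t$ of order $\rel$ (constant depending on $c,\epsilon,\epsilon'$). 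Hence $\tau_n(\epsilon')\le \mixn(\epsilon)+O(\rel)=(1+o(1))\mixn(\epsilon)$. The main technical input is thus the reversible estimate $\mathbb{E}_{\pi_n}[T_{z_n}]\le \rel/\pi_n(z_n)$: this is where reversibility together with $\inf_n\pi_n(z_n)>0$ is really used, and everything else is bookkeeping controlled by the product condition.
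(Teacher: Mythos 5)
Your two directions are exactly the two halves of the paper's own argument (Lemma~\ref{cor: contractioncor}, proved in the appendix and applied through the generalized Proposition~\ref{prop: hitcutoff}): for the upper bound the paper decomposes at $T_{z_n}$ and applies the $\ell_2$ contraction bound \eqref{eq: L2contraction} started from $z_n$, which is precisely your first display with $r=O(\rel)$; for the lower bound it couples with a stationary copy and controls $\Pr_{\pi_n}[T_{z_n}>t]$ on the scale $\rel/\pi_n(z_n)$, using the exponential tail bound \eqref{eq: hittingfrompi} where you use $\mathbb{E}_{\pi_n}[T_{z_n}]\le \rel/\pi_n(z_n)$ plus Markov's inequality -- an immaterial difference, since both come from the same spectral estimate and both give an $O(\rel)$ window. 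So in substance your proof is the paper's proof.

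The one place where your bookkeeping, as written, does not suffice is the passage from the additive bounds to the ratio statement \eqref{eq: hitcutoff0}. Knowing $\tau_n(\gep)\to\infty$ does not make the $O(\rel)$ correction negligible, because $\rel$ may itself diverge; what you need is $\rel=o(\tau_n(\gep))$ (and similarly $\rel=o(\tau_n(\gep'))$ for the reverse direction). Your contradiction argument delivers this when $\gep<1/4$: taking $\gep'=1/4$ in your first bound gives $\tau_n(\gep)\ge \mixn-O(\rel)$, and the product condition $\rel=o(\mixn)$ finishes. But for $\gep\ge 1/4$ the conclusion ``$\mixn(\gep')=O(\rel)$'' does not contradict $\rel=o(\mixn)$, since $\mixn(\gep')\le\mixn$ there. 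The fix is one extra line: by the Markov property the tail $\max_x\Pr_x[T_{z_n}>t]$ is submultiplicative in $t$, so if $\tau_n(\gep)\le K\rel$ along a subsequence then $\tau_n(1/8)\le jK\rel$ for a fixed $j$ with $\gep^{\,j}\le 1/8$, and then your first inequality (with levels $1/8<1/4$) yields $\mixn=O(\rel)$, contradicting the product condition; this gives $\rel=o(\tau_n(p))$ for every fixed $p\in(0,1)$ and closes both directions for the full range $0<\gep<\gep'<1$ claimed in the proposition.
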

Note that in particular the result shows that total-variation cutoff occurs if and only if $\tau_n(\cdot)$ displays the following abrupt transition
\begin{equation}
 \forall \gep\in (0,1/2], \quad \lim_{n \to \infty}\frac{\tau_n(1-\eps)}{\tau_n(\eps)}=1.
\end{equation}

To characterize the separation time, we introduce a notion of ``double-hitting time".

\begin{definition}
\label{def: convolution}
Given $x,y$ and $z$ in $\gO$. We let $T_{z}^{x,y}$ denote a random variable obtained by taking the sum 
of two independent realizations of $T_z$, once under $\mathrm{P}_x$ and once
under $\mathrm{P}_y$. That is, $\mathbb{P}[T_{z}^{x,y}=t]:=\sum_{k =0}^t
\mathrm{P}_x[T_z=k]\mathrm{P}_y[T_z=t-k].$
\end{definition}

\begin{lemma}
\label{lem: pathsdecomposition}
Let  $(\Omega,P,\pi)$ be a finite irreducible lazy reversible Markov
chain. Consider $x,y, z
\in \Omega$.
\begin{itemize}
\item[(i)] For all $t \ge 0 $
we have that
\begin{equation}
\label{eq: septhroughz1}
P^t(x,y)/\pi(y)
 \ge \sum_{k \le t }\mathbb{P}[T_{z}^{x,y}=k]P^{t-k}(z,z)/ \pi(z)
\ge   \mathbb{P}[T_{z}^{x,y} \le t]. 
\end{equation}
In particular,
\begin{equation}
\label{eq: septhroughz3}
P^t(x,y)/\pi(y) \ge \Pr_x[T_y \le t]
\end{equation}
\item[(ii)]
If
$\mathrm{P}_{x}[T_z \le T_y]=1$ (i.e.~if every path from $x$ to $y$ goes through $z$) then for all $t \ge 0$
\begin{equation}
\begin{split}
\label{eq: septhroughz2}
& P^t(x,y)/\pi(y)
=\sum_{k \le t }\mathbb{P}[T_{z}^{x,y}=k]P^{t-k}(z,z)/ \pi(z)\\
& \le  \mathbb{P}[T_{z}^{x,y} \le t]+\frac{1}{2} t_{\mathrm{rel}} \max_{k\in \bbN} \bbP [T_{z}^{x,y} =k ] \sqrt{(1-\pi(z))/\pi(z)}.
\end{split}
\end{equation}
\end{itemize}
\end{lemma}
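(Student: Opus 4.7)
The strategy is to derive both parts from a first-passage decomposition at $z$ combined with reversibility. For (i), I would first condition $P^t(x,y)$ on the first hitting time of $z$:
\[
P^t(x,y) \;=\; \sum_{s=0}^{t}\mathbb{P}_x[T_z=s]\,P^{t-s}(z,y) \;+\; \mathbb{P}_x[T_z>t,\,X_t=y].
\]
Dropping the non-negative remainder, applying reversibility $\pi(z)P^{t-s}(z,y)=\pi(y)P^{t-s}(y,z)$, and then performing the analogous first-passage decomposition on $P^{t-s}(y,z)$ (which is an exact equality since the endpoint is $z$) yields, after reindexing the resulting double sum by $k=s+r$,
\[
P^t(x,y)/\pi(y) \;\geq\; \sum_{k=0}^{t}\mathbb{P}[T_z^{x,y}=k]\,P^{t-k}(z,z)/\pi(z).
\]
The second inequality of (i) then follows from $P^s(z,z)/\pi(z)\geq 1$, which I would prove via the spectral expansion $P^s(z,z)/\pi(z)=1+\sum_{i\geq 2}\lambda_i^s f_i(z)^2$: under laziness all eigenvalues of $P$ lie in $[0,1]$ (write $P=\tfrac12 I+\tfrac12 Q$ with $Q$ stochastic), so every term in the spectral expansion is non-negative. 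The ``in particular'' statement \eqref{eq: septhroughz3} is simply the special case $z=y$, since then $T_y\equiv 0$ under $\mathbb{P}_y$ and $T_y^{x,y}$ has the law of $T_y$ under $\mathbb{P}_x$.

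For part (ii), the hypothesis $\mathbb{P}_x[T_z\le T_y]=1$ forces the remainder $\mathbb{P}_x[T_z>t,X_t=y]$ to vanish, so the chain of inequalities from (i) becomes a chain of equalities. It then remains to control the error
\[
\sum_{k\leq t}\mathbb{P}[T_z^{x,y}=k]\bigl(P^{t-k}(z,z)/\pi(z)-1\bigr).
\]
A naive domination by $\max_k \mathbb{P}[T_z^{x,y}=k]$ times $\sum_{s\geq 0}(P^s(z,z)/\pi(z)-1)=\sum_{i\geq 2}f_i(z)^2/(1-\lambda_i)$, combined with $1/(1-\lambda_i)\leq t_{\mathrm{rel}}$ and the identity $\sum_{i\geq 2}f_i(z)^2=\|\mathbf{1}_z/\pi(z)-1\|_{2,\pi}^2=(1-\pi(z))/\pi(z)$, would only yield the factor $t_{\mathrm{rel}}(1-\pi(z))/\pi(z)$, which is off by a square-root from the target.

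The main obstacle is therefore extracting this missing $\sqrt{(1-\pi(z))/\pi(z)}$ improvement. My plan is to view $P^s(z,z)/\pi(z)-1=\langle h_0,P^s h_0\rangle_\pi$ with $h_0:=\mathbf{1}_z/\pi(z)-1$, so that $\|h_0\|_{2,\pi}=\sqrt{(1-\pi(z))/\pi(z)}$, and apply Cauchy--Schwarz to the whole convolution in $L^2(\pi)$ rather than to each summand separately. Splitting $P^{t-k}=P^{\lfloor(t-k)/2\rfloor}P^{\lceil(t-k)/2\rceil}$ and using the contraction $\|P^m h_0\|_{2,\pi}\leq \lambda_2^m\|h_0\|_{2,\pi}$, together with a second Cauchy--Schwarz pairing $\mathbb{P}[T_z^{x,y}=k]$ against a geometric decay in $\lambda_2^{t-k}$, should couple the $\max_k \mathbb{P}[T_z^{x,y}=k]$ factor with a geometric series that sums to a single $t_{\mathrm{rel}}$ and keep $\|h_0\|_{2,\pi}$ to the first power rather than the second. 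The delicate step is arranging the Cauchy--Schwarz precisely so that one factor of $\|h_0\|_{2,\pi}$ is absorbed into the geometric-sum estimate and the other survives as $\sqrt{(1-\pi(z))/\pi(z)}$ in the final bound.
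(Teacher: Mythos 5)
Part (i), the specialization \eqref{eq: septhroughz3} (taking $z=y$, so that $T_y^{x,y}$ has the law of $T_y$ under $\Pr_x$), and the equality in \eqref{eq: septhroughz2} are all correct, and your route is essentially the paper's own (the paper proves the set version, Lemma \ref{lem: pathsdecompositions}, in the appendix): a first-passage decomposition at $z$ from $x$, reversibility, a second exact decomposition from $y$, a reindexing into the convolution $T_z^{x,y}$, and non-negativity of the spectrum under laziness to get $P^{s}(z,z)/\pi(z)\ge 1$.

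The gap is the upper bound in part (ii): the error term is never actually estimated, and the Cauchy--Schwarz scheme you sketch cannot deliver what you want. The quantity $\langle h_0,P^{s}h_0\rangle_\pi=\|P^{s/2}h_0\|_{2,\pi}^2$ is genuinely quadratic in $h_0$; splitting $P^{t-k}$ in half and contracting only reproduces $\lambda_2^{t-k}\|h_0\|_{2,\pi}^2$, and no pairing against the law of $T_z^{x,y}$ will trade one factor of $\|h_0\|_{2,\pi}$ for a summable quantity --- indeed the bound exactly as displayed cannot be proved in full generality (for $x=y=z$ in a two-state chain with $\pi(z)$ small it already fails at $t=0$), so your difficulty is structural, not a matter of arranging Cauchy--Schwarz cleverly. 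The missing idea, which is what the paper does, is much simpler: do not expand the quadratic form at all, but bound the deviation of the return probability by a total-variation distance and then use the standard $\ell_1$-versus-$\ell_2$ contraction estimate (Lemma \ref{lem: contraction}): $P^{s}(z,z)-\pi(z)\le \|\Pr_z^{s}-\pi\|_{\mathrm{TV}}\le \tfrac12\lambda_2^{s}\|\delta_z-\pi\|_{2,\pi}=\tfrac12\lambda_2^{s}\sqrt{(1-\pi(z))/\pi(z)}$; the square root appears precisely because the TV distance is \emph{linear} in the $\ell_2$ norm of the initial density. Plugging this into the exact identity and using $\sum_{k\le t}\mathbb{P}[T_z^{x,y}=k]\lambda_2^{t-k}\le \max_k\mathbb{P}[T_z^{x,y}=k]\,(1-\lambda_2)^{-1}= t_{\mathrm{rel}}\max_k\mathbb{P}[T_z^{x,y}=k]$ finishes the proof. (Carried out carefully this route picks up an extra factor $1/\pi(z)$ when converting $P^{t-k}(z,z)-\pi(z)$ into $P^{t-k}(z,z)/\pi(z)-1$, i.e.\ the displayed constant should be read up to this factor; this is immaterial in every application of the lemma, where $\pi(z_n)$ is bounded away from $0$.)
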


 All our examples would be of sequences of chains whose spectral gaps are uniformly bounded away from zero, that is, ones satisfying
\begin{equation}\label{star}
\tag{$\star$}
 \inf_{n}(1- \gl^{(n)}_2)>0.
\end{equation}

Although this is not necessary, working with such chains substantially simplifies the analysis of our examples.
To check this condition, we use the notion of the Cheeger constant and the well-known discrete analog of Cheeger's inequality \eqref{eq: Sinclair} \cite{cf:Alon1,cf:Alon2,cf:Sinclair} (the proof can also be found at \cite[Theorem
13.14]{cf:LPW}).

\begin{definition}
\label{def: Cheeger}
For any (non-empty) set $A \varsubsetneq\ \Omega $ we define 
\begin{equation*}
Q(A):=\sum_{x \in A,y \notin A }\pi(x)P(x,y) \quad \text{ and }  \quad \Phi(A):=Q(A)/\pi(A).
\end{equation*}
We define the \textbf{{\em Cheeger constant}} of the chain to be 
\begin{equation*}
 \Phi:=\min_{A:0< \pi(A) \le 1/2}\Phi(A).
\end{equation*}
We call a sequence of chains $(\Omega_n,P_n,\pin)$ an \textbf{{\em expander family}} if $\inf_n \Phi_n >0$. 
\end{definition}
The following result implies that a sequence of reversible chains satisfies \eqref{star} if and only if it is an expander family.

\begin{theorem}
\label{thm: Cheeger}
Let $\lambda_2$ be the second largest eigenvalue of a reversible transition matrix on a finite state space. Let $\Phi$ be as in Definition \ref{def: Cheeger}.  Then
\begin{equation}
\label{eq: Sinclair}
\Phi^2/2 \le 1- \lambda_2 \le 2\Phi. 
\end{equation}
\end{theorem}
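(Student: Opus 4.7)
The plan is to derive both inequalities from the min--max (Courant--Fischer) characterization
\[
1 - \lambda_2 \;=\; \inf\left\{\frac{\cE(f,f)}{\Var_\pi(f)} \,:\, f : \Omega \to \R,\ f \text{ non-constant}\right\},
\]
where $\cE(f,f) := \tfrac{1}{2}\sum_{x,y}\pi(x)P(x,y)(f(x)-f(y))^2 = \langle (I-P)f, f\rangle_\pi$ is the Dirichlet form; here reversibility is used to ensure that $P$ is self-adjoint with respect to $\langle \cdot,\cdot \rangle_\pi$, so that the spectral formula applies.

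For the upper bound $1-\lambda_2 \le 2\Phi$, I would pick a set $A$ with $\pi(A) \le 1/2$ realizing $\Phi = \Phi(A)$ and test the Rayleigh quotient against $f := \ind_A$. A direct computation using reversibility gives $\cE(\ind_A,\ind_A) = Q(A)$, and since $\Var_\pi(\ind_A) = \pi(A)(1-\pi(A)) \ge \pi(A)/2$, this yields $1 - \lambda_2 \le 2 Q(A)/\pi(A) = 2 \Phi$.

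For the lower bound $\Phi^2/2 \le 1 - \lambda_2$, I may assume $\lambda_2 \ge 0$ (otherwise $1-\lambda_2 \ge 1 \ge \Phi^2/2$, since $\Phi(A) \le 1$ for every $A$). Let $f$ be an eigenvector of $P$ with eigenvalue $\lambda_2$. Because $\lambda_2 < 1$, orthogonality to the constant eigenvector forces $\langle f, 1\rangle_\pi = 0$, so one of the sets $\{f > 0\}$ and $\{f < 0\}$ has $\pi$-mass at most $1/2$. After possibly replacing $f$ by $-f$, I set $g := f_+$ so that its support $S = \{f > 0\}$ satisfies $\pi(S) \le 1/2$. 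The key pointwise inequality is that for $x \in S$,
\[
(Pg)(x) = \sum_y P(x,y) f_+(y) \;\ge\; \sum_y P(x,y) f(y) = \lambda_2 f(x) = \lambda_2 g(x),
\]
using $f_+ \ge f$ and $\lambda_2 \ge 0$. Multiplying $g(x) - (Pg)(x) \le (1-\lambda_2) g(x)$ by $g(x)$ and summing against $\pi$ (and noting that $g$ vanishes off $S$) gives $\cE(g,g) \le (1-\lambda_2)\langle g,g\rangle_\pi$.

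The heart of the proof is then to establish the discrete Poincaré-type bound $\Phi^2 \langle g,g\rangle_\pi \le 2 \cE(g,g)$ for every non-negative $g$ supported on a set $S$ with $\pi(S) \le 1/2$. The plan is to apply the layer-cake identity to $h := g^2$:
\[
\sum_{x,y}\pi(x)P(x,y)|h(x)-h(y)| = 2 \int_0^\infty Q(\{h>t\})\,dt \;\ge\; 2\Phi \int_0^\infty \pi(\{h>t\})\,dt = 2\Phi\,\langle g,g\rangle_\pi,
\]
where the inequality uses $\{h>t\} \subseteq S$ to ensure $\pi(\{h>t\}) \le 1/2$. Factoring $h(x)-h(y) = (g(x)-g(y))(g(x)+g(y))$ and applying Cauchy--Schwarz together with $(g(x)+g(y))^2 \le 2(g(x)^2+g(y)^2)$ bounds the same sum above by $2\sqrt{2\cE(g,g)}\sqrt{\langle g,g\rangle_\pi}$; squaring and rearranging then yields the Poincaré bound, which combined with $\cE(g,g) \le (1-\lambda_2)\langle g,g\rangle_\pi$ gives $\Phi^2/2 \le 1-\lambda_2$. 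The main technical obstacle is this co-area step together with the judicious choice of $g$ that simultaneously secures the Dirichlet-energy inequality and the support condition; taking $g = f_+$ after the sign adjustment handles both cleanly.
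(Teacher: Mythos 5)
Your proof is correct. The paper does not actually prove Theorem \ref{thm: Cheeger}; it only cites the standard references (\cite{cf:Alon1,cf:Alon2,cf:Sinclair} and \cite[Theorem 13.14]{cf:LPW}), and your argument---the variational characterization of $1-\lambda_2$ with the indicator $\ind_A$ as test function for the upper bound, and the positive part $f_+$ of the second eigenfunction combined with the layer-cake identity and Cauchy--Schwarz for the lower bound---is precisely the standard proof given in those sources (note only that you implicitly use $\lambda_2<1$, i.e.\ irreducibility, which is a standing assumption in the paper).
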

It is rather straightforward to check in all of our examples that the Cheeger constant is bounded away from zero.

\begin{proposition}\label{prop: critsep}
 Let $(\gO_n,P_n,\pi_n)$ be a sequence of lazy reversible irreducible finite Markov chains which satisfies \eqref{star}.
 Let us furthermore assume that there exist  $z_n\in \gO_n$, sets $A_n, B_n \subset \gO_n$, with $A_n\cup B_n= \gO_n \setminus \{z_n\}$ and
 $a_n\in A_n$, $b_n\in B_n$,
 such that 
 \begin{itemize}
  \item [(i)] $\inf_{n} \pi_n(z_n)>0$.
  \item [(ii)] For any $x\in A_n$ and $y\in B_n$, $\Pr_x[T_{z_n}<T_y]=1$.
  \item [(iii)] For all $t$
  \begin{equation*}
    \max_{x\in A_n\cup B_n} \Pr_x[T_{z_n}>t]  =  \Pr_{a_n}[T_{z_n}>t]\quad \text{and} \quad
    \max_{y\in B_n} \Pr_y[T_{z_n}>t] =  \Pr_{b_n}[T_{z_n}>t].
\end{equation*}
  \item [(iv)]   
  \begin{equation}
\liminf_{n\to \infty} \inf_{t\ge 0} \min_{x,y\in A_n  } \left( \frac{P_n^t(x,y)}{\pi_n(y)}-\frac{P_n^t(a_n,b_n)}{\pi_n(b_n)}\right)\ge 0. 
  \end{equation}
  \item[(v)]
\begin{equation}
\lim_{n\to \infty}\max_{k\ge 0} \bP_{a_n}\left[ T_{z_n}=k\right]=0. 
\end{equation}
\end{itemize}
Then 
\begin{equation}
\lim_{n\to \infty} \sup_{t\ge 0} \left |  \dsepn(t)-\mathbb{P} [T_{z_{n}}^{a_n,b_n}>t] \right|=0. 
\end{equation}
\end{proposition}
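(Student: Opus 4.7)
The plan is to identify $(a_n,b_n)$ as the pair asymptotically minimizing $P_n^t(x,y)/\pi_n(y)$, and to evaluate this minimum via the hitting-time decomposition provided by Lemma \ref{lem: pathsdecomposition}.

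First I would apply Lemma \ref{lem: pathsdecomposition}(ii) to $(a_n,b_n)$ (legitimate by assumption (ii)). Combined with the fact that $P_n^s(z_n,z_n)/\pin(z_n)\ge 1$ for lazy reversible chains (immediate from the eigenfunction expansion, since laziness forces all eigenvalues to be non-negative), this yields the sandwich
\[
\mathbb{P}[T_{z_n}^{a_n,b_n}\le t] \le \frac{P_n^t(a_n,b_n)}{\pin(b_n)} \le \mathbb{P}[T_{z_n}^{a_n,b_n}\le t]+\epsilon_n,
\]
where $\epsilon_n=\tfrac12\reln\sqrt{(1-\pin(z_n))/\pin(z_n)}\,\max_k\mathbb{P}[T_{z_n}^{a_n,b_n}=k]$. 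Assumptions (i) and \eqref{star} bound the first two factors uniformly, while the elementary inequality $\max_k\mathbb{P}[X+Y=k]\le\max_k\mathbb{P}[X=k]$ (for independent $X,Y$) combined with (v) makes the third factor $o(1)$. Hence $\epsilon_n\to 0$ uniformly in $t$, which already gives $\dsepn(t)\ge\mathbb{P}[T_{z_n}^{a_n,b_n}>t]-o(1)$.

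For the matching upper bound I must show $P_n^t(x,y)/\pin(y)\ge\mathbb{P}[T_{z_n}^{a_n,b_n}\le t]-o(1)$ uniformly in $t$ and over every pair $(x,y)$, which I would treat by cases according to the partition $\{A_n,B_n,\{z_n\}\}$. If $x\in A_n$ and $y\in B_n$ (or the symmetric case by reversibility), Lemma \ref{lem: pathsdecomposition}(ii) together with $P^s(z_n,z_n)/\pin(z_n)\ge 1$ gives $P_n^t(x,y)/\pin(y)\ge\mathbb{P}[T_{z_n}^{x,y}\le t]$, while (iii) implies $T_{z_n}^{x,y}\preceq T_{z_n}^{a_n,b_n}$ factor by factor. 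If $y=z_n$ (or $x=z_n$ by reversibility), Lemma \ref{lem: pathsdecomposition}(i) and (iii) give $P_n^t(x,z_n)/\pin(z_n)\ge\Pr_x[T_{z_n}\le t]\ge\Pr_{a_n}[T_{z_n}\le t]\ge\mathbb{P}[T_{z_n}^{a_n,b_n}\le t]$, the last step because convolving with an independent nonnegative random variable can only postpone hitting. If $x,y\in A_n$, assumption (iv) together with the lower sandwich of the first step yields $P_n^t(x,y)/\pin(y)\ge P_n^t(a_n,b_n)/\pin(b_n)-o(1)\ge\mathbb{P}[T_{z_n}^{a_n,b_n}\le t]-o(1)$.

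The main obstacle will be the remaining case $x,y\in B_n$, which is not directly covered by (iv). Here I would split $t$ into two regimes. When $t$ exceeds a sufficiently large multiple of $\reln\log(1/\min_y\pin(y))$, the standard $\ell^2$ spectral bound (uniform under \eqref{star}) makes $P_n^t(x,y)/\pin(y)=1+o(1)$, which dominates any quantity bounded by $1$. For smaller $t$ I would route paths through $z_n$ via the strong Markov property, using the bound
\[
\Pr_x[T_y\le t]\ge\sum_{k\le s}\Pr_x[T_{z_n}=k]\,\Pr_{z_n}[T_y\le t-k],
\]
and control the first factor by (iii) (comparing to the hitting time from $b_n$) and the second by a symmetric application of Lemma \ref{lem: pathsdecomposition}(ii) to the pair $(b_n,z_n)$. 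Optimizing over the splitting parameter $s$ should then match $\mathbb{P}[T_{z_n}^{a_n,b_n}\le t]$ up to $o(1)$, completing the uniform upper bound on $\dsepn(t)$.
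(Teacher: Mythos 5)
Most of your argument coincides with the paper's proof: the sandwich for $(a_n,b_n)$ obtained from Lemma \ref{lem: pathsdecomposition}(ii) together with hypotheses (i), (v) and \eqref{star} is exactly the paper's estimate \eqref{framing} (including the observation that the mode of a convolution is at most the mode of either factor), and your treatment of the cases $A_n\times B_n$, pairs involving $z_n$, and $A_n\times A_n$ is the paper's. The genuine gap is the case $x,y\in B_n$. You have overlooked that the lower bound in Lemma \ref{lem: pathsdecomposition}(i), namely $P^t(x,y)/\pi(y)\ge \mathbb{P}[T_{z}^{x,y}\le t]$, holds for \emph{arbitrary} $x,y,z$ --- no path condition is required there; only part (ii) needs one. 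Hence for $x,y\in B_n$ the same two lines you used for $A_n\times B_n$ apply verbatim: by (iii), $T_{z_n}$ started from $x\in A_n\cup B_n$ is stochastically dominated by $T_{z_n}$ started from $a_n$, and started from $y\in B_n$ by that from $b_n$, so $\mathbb{P}[T_{z_n}^{x,y}\le t]\ge \mathbb{P}[T_{z_n}^{a_n,b_n}\le t]$. This is precisely how the paper disposes of every pair in $\Omega_n^2\setminus A_n^2$ in one stroke; your ``main obstacle'' is not an obstacle at all.

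The workaround you propose instead does not close that case. Its small-$t$ half rests on $P_n^t(x,y)/\pi_n(y)\ge \Pr_x[T_y\le t]$, but for a specific state $y\in B_n$ of exponentially small stationary mass, $\Pr_x[T_y\le t]$ is typically negligible at times when $\mathbb{P}[T_{z_n}^{a_n,b_n}\le t]$ is already close to $1$: in Example \ref{ex: 1}, reaching $y=b_n$ from inside $B_n$ requires travelling distance $n$ against the drift, so $\Pr_x[T_{b_n}\le Cn]$ is exponentially small while $\mathbb{P}[T_{z}^{a,b}\le 12n]$ is of order one. Your large-$t$ half only takes over above a threshold of order $\reln\log(1/\min_y\pi_n(y))$, and nothing in hypotheses (i)--(v) or \eqref{star} guarantees that this threshold sits below the times where the small-$t$ bound is useless, so the two regimes need not overlap and no choice of the splitting parameter $s$ can rescue the argument in general. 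Moreover, the sub-step ``control the second factor by a symmetric application of Lemma \ref{lem: pathsdecomposition}(ii) to the pair $(b_n,z_n)$'' is not meaningful as stated: that lemma bounds ratios $P^t(\cdot,\cdot)/\pi(\cdot)$ in terms of hitting probabilities, and gives no lower bound on $\Pr_{z_n}[T_y\le \cdot]$ for an arbitrary $y\in B_n$. Once the $B_n\times B_n$ case is repaired as indicated above, the remainder of your proof is correct and is essentially the paper's argument.
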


\begin{proof}
We want
to show that  $P_{n}^t(x,y)/\pi_{n}(y)$  achieves its smallest value for $(x,y)=(a_n,b_n)$
up to a negligible correction. According to $(iv)$ we do not need to worry about the case when both $x$ and $y$ lie in $A_n$. 
For the other cases, condition $(iii)$ combined with  Lemma \ref{lem: pathsdecomposition} guaranties that 
\begin{equation}
\forall t, \, \forall (x,y) \in \gO_n^2 \setminus A_n^2, \quad  
P_{n}^t(x,y)/\pi_{n}(y)\ge  \bbP\left[ T_{z_n}^{x,y}\le t \right] \ge \bbP \left[ T_{z_n}^{a_n,b_n}\le t\right].
\end{equation}
Finally, applying Lemma \ref{lem: pathsdecomposition} again yields that 
\begin{equation}\label{framing}
0 \le \frac{P_n^t(a_n,b_n)}{\pi_n(b_n)}- \mathbb{P}[T_{z_n}^{a_n,b_n} \le t] \le  
\frac{1}{2} t_{\mathrm{rel}}^{(n)}  \max_{k\ge 0} \bbP [T_{z_n}^{a_n,b_n} =k]\sqrt{(1-\pi_{n}(z_n))/\pi_{n}(z_n)}.
\end{equation}
This allows to conclude the proof by noticing that the right-hand side of \eqref{framing} is $o(1)$ (using $(i)$ and $(v)$).
\end{proof}
\begin{remark}
\label{rem: distance}
We note that for lazy chains condition (v) in Proposition \ref{prop: critsep} follows from the condition
$\lim_{n \to \infty} \mathrm{dist}(a_n,z)=\infty $ (which is satisfied in Examples \ref{ex: 1}-\ref{ex: 3}), 
where $\mathrm{dist}(a_n,z)$ is the minimal $k$ such that $P^k(a_n,z)>0 $.  To see this, consider the non-lazy path the chain performed from $a_n$ to $z$ by time $T_z$,  $\gamma=(\gamma_0=a_n,\gamma_1,\ldots, \gamma_{\ell}=z)$  (i.e.~for all $i< \ell$, $\gamma_{i+1} \neq \gamma_i $ and possibly after spending some time at $\gamma_i$ the chain moved to $\gamma_{i+1}$). The conditional law of $T_z$, given $\gamma$, is that of a sum of $\ell$
independent geometric random variables with parameter $1/2$), and so by the local CLT its mode is at most $C/ \sqrt{ \ell} \le C/\sqrt{\mathrm{dist}(a_n,z)}$. Finally, note that the mode of a mixture is at most the maximal mode of a distribution in the mixture.   
\end{remark}

\section{Total-variation cutoff without separation cutoff examples}
\label{sec: TVcutoff}
In this section we describe two similar examples of sequences of reversible chains which exhibit total-variation cutoff but no separation cutoff.
The analysis of both examples is extremely similar. We present both examples since while the first  demonstrates that \eqref{groto} is indeed sharp, 
it is much harder to transform it into an example of lazy SRWs on bounded degree expander graphs.


\begin{example}
\label{ex: 1} Given $n\ge 2$, set $\Omega_n:=A \cup \{z\} \cup B$
where $A=A_{n}:=\{a=a_{n},a_{n-1},\ldots,a_1\}$ and $B=B_{n}:=\{b_1,b_{2},\ldots,b_{n-1},b_n=b \}$. For notational convenience we write $a_{0}:=z=:b_{0}$. 
The matrix $P_n$ has positive transition rates on the set of (un-oriented) edges 
$E=E_A \cup E_{B} \cup E_{\mathrm{Long}}$, where 
\begin{equation}
 \begin{split}
  E_A&:=\{e_k^A:=\{a_{k},a_{k-1} \} :k \in [n]\},\\
  E_{B}&:=\{e_k^B:=\{b_{k},b_{k-1},\} :k \in [n]\}, \\
  E_{\mathrm{Long}}&:=\{e_k^{\mathrm{L}}:=\{z,b_k \}:k \in [n] \}.
 \end{split}
\end{equation}
With a small abuse of notation we define $e_1^{\mathrm{L}}$ and $e_1^B$ to be two distinct parallel edges.
To each of these edges, we associate conductances (or weights), as follows
\begin{itemize}
\item
$w_n(e_k^A)=2^{-k}=w_n(e_k^B)$, for all $k \in [n]$.
\item
$w_n(e_k^{\mathrm{L}})=\frac{w_n(e_k^{\mathrm{B}})+w_n(e_{k+1}^{\mathrm{B}})}{n-1} =3 \cdot \frac{2^{-(k+1)}}{(n-1)}$ for $ k \in [n-1],$ and 
$w_n(e_n^{\mathrm{L}})=\frac{2^{-n}}{(n-1)}$. 
\end{itemize}
We let $P_n$ be the transition matrix of the $(1/2)$-lazy random walk on the graph $(\gO_n,E)$ with conductances $w_n$,
i.e.\ we set 
\begin{equation}
 \begin{split}
  P_{n}(x,x)&=1/2  \text{ for all }  x \in \Omega_n,\\
  P_{n}(x,y)&=\frac{w_n(x,y)1_{x \neq y }}{2 w_n(x)},
 \end{split}
\end{equation}
where $w_n(x):=\sum_{y\in \gO_n} w_n(x,y)$
with the convention that  $w_n(z,b_1)=w_n(e_1^{\mathrm{L}})+w_n(e_1^B)$.
This Markov chain is reversible with respect to 
$$\pi_n(x):=\frac{w_n(x)}{\sum_{y\in \gO_n}w_n(y)}.$$ 
\end{example}

\begin{figure}[h]
\begin{center}
\leavevmode
\epsfxsize =12 cm
\psfragscanon
\epsfbox{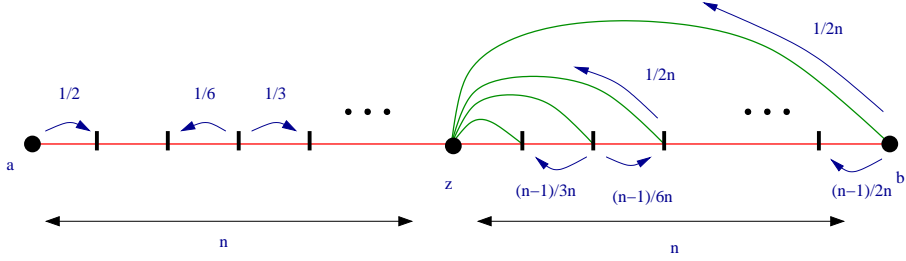}
\end{center}
\caption{\label{fig:ex1} 
A schematic representation of the transition rates for Example \ref{ex: 1}.
On the segments $A$ and $B$ the transition rates away from and towards the center of mass $z$ 
are equal respectively to $1/6$ and $1/3$ (on the $A$ side) and 
$(n-1)/6n$ and $(n-1)/3n$ (on the $B$ side). The rate for using a green-edge to land on $z$ is equal to $1/2n$. The rates for using green edges in the other direction has 
a more complicated expression prescribed by reversibility. These rates are described below despite the fact that they play no role in our analysis. 
}
\end{figure}

A simple calculation show that 
\begin{equation}
\label{eq: 4.3}
\begin{split}
w_n(z)&=1+\frac{3-2^{-(n-2)}}{2(n-1)},\\ 
\sum_{y\in \gO_n}w_n(y)&=4(1-2^{-n})+\frac{3-2^{-(n-2)}}{2(n-1)},
\end{split}\end{equation}
which implies  $\lim_{n\to \infty} \pi_n(z) =1/4$. 
The transition matrix obtained from $w_n$ is
\begin{itemize}
\item
$P_{n}(x,x)=1/2 $, for all $x \in \Omega_n $.
\item $P_n(a_{n},a_{n-1})=1/2$.
\item $2P_n(a_i,a_{i+1})=1/3=P_n(a_{i},a_{i-1})$, for all $1 \le i <n$.
\item $P_n(b_{i},z)=\frac{1}{2n} $, for $i\ge 2$.
\item $P_n(b_{i},b_{i-1})=\frac{1}{3}(1-\frac{1}{n})=2P_n(b_{i},b_{i+1}) $, for all $2\le i\le n-1$.
\item
$P_n(b_n,b_{n-1})=\frac{1}{2}-\frac{1}{2n} $.
\item
$P_n(b_{1},z)=\frac{1}{2n}+\frac{1}{3}(1-\frac{1}{n}) $ and  $P_n(b_{1},b_{2})=\frac{1}{6}(1-\frac{1}{n})
$.
\item  $P_n(z,b_1)=\frac{2n+1}{4(2n+1-2^{-(n-2)})}=\frac{1+o(1)}{4}$ and
$P_n(z,a_1)=\frac{n-1}{4n+2-2^{-(n-1)}}=\frac{1-o(1)}{4}$. 
\item $P_n(z,b_k)=\frac{3}{2^{\\ k+1}\left(2n+1-2^{-(n-2)}\right)}=\frac{3-o(1)}{ n 2^{k+2}}$, for $2 \le k \le  n-1$,\\
 and $P_n(z,b_n)=\frac{1}{2^{n}\left(2n+1-2^{-(n-2)}\right)}$.
\end{itemize}

Note that for this chain, condition \eqref{star} is easily verified using Theorem \ref{thm: Cheeger}. Since under $\Pr_{a_n}$, $T_z$ is concentrated around time $6n$, to prove total-variation cutoff around time $6n$
 for this sequence of chains (using Proposition \ref{prop: crittv}),
we only need to verify that $a_{n}$ is the initial state from which $T_z$ is (stochastically) the largest.
A crucial fact which shall assist us in this task is that for all $i \in [n]$ and all $t$
\begin{equation}
\label{eq: comparingBtoA}
\Pr_{b_{i}}[T_z >t] \le \Pr_{a_{i}}[T_z >t]. 
\end{equation}
The reason for this identity is the following: We couple  $X^A$ and $X^B$ starting from $a_i$ and $b_i$ (resp.) in the following manner:
with probability $1/2$
both stay put, with probability $(\frac{1}{2}-\frac{1}{2n})$ $X^A$ and $X^B$ make ``the same move" ($+/- 1$ (towards/away from $z$) with (conditional) probability $1/3$ and $2/3$, resp.~(unless the current position of the chain is either $a_n$ or $b_n$ in which case the move has to be $-1$) and
with probability $1/(2n)$, $X^B$ is sent directly to $z$ while $X^A$ moves towards/away from $z$ with probability $2/3$ and $1/3$ (unless it is located at $a_n$).
We do not need to specify how the coupling is defined after $X^B$ has hit $z$.

\medskip

A way to describe $X_t$ starting from $B$ before it hits $z$ is the following: at each step it is killed (hits $z$) 
with rate $1/(2n)$ and conditionally on not being killed, it performs ``the same" random walk as that on $A$ 
(in terms of the index of its current position) but with holding probability
$n/(2n-1)\ge 1/2$. 

Consequently,
\begin{equation}
\label{eq: cond2}
\begin{split}
& \max_{y \in \Omega_n }\Pr_{y}[T_z >t]=\max_{y \in A_n }\Pr_{y}[T_z
>t]=\Pr_{a_{n}}[T_z
>t], \\ &  \max_{y \in B_n }\Pr_{y}[T_z
>t] = \Pr_{b_{n}}[T_z
>t].
\end{split}
\end{equation}
Moreover, it follows from the above discussion that
\begin{equation}\label{eq: comparingBtoAbis}
 \Pr_{a_{n}}[T_z >t] \left(1-\frac{1}{2n}\right)^{t} \le \Pr_{b_{n}}[T_z >t] \le \min \left( \Pr_{a_{n}}[T_z >t],  \left(1-\frac{1}{2n}\right)^{t}\right).
\end{equation}

\medskip

We now turn to the task of verifying that there is no cutoff in separation. Note that conditions $(i)$-$(ii)$ of Proposition \ref{prop: critsep} hold by construction, condition $(v)$ holds by Remark \ref{rem: distance}, while condition $(iv)$ holds by \eqref{eq: septhroughz3}. Lastly, condition $(iii)$ of Proposition \ref{prop: critsep} 
follows form \eqref{eq: cond2}, and so Proposition \ref{prop: critsep} applies. Consequently,
\begin{equation}
\label{eq: applyingCor3.9}
\lim_{n\to \infty} \sup_{t\ge 0} |\dsepn(t)-\mathbb{P}[T_z^{a_n,b_n} > t]|=0.
\end{equation}
Set $m_n:=\lceil n^{2/3}\rceil$ (the exponent $2/3$ can be replaced by any number in $(1/2,1)$). It is standard to check that 
\begin{equation}
 \begin{split}
 & \quad \quad \quad \quad \lim_{n\to \infty} \Pr_{a_n}[|T_{z}-6n|>m_n]=0,\\
 &\lim_{n\to \infty} \sup_{t\in [0,6n-2m_n]}  | \Pr_{b_n} (T_z> t+m_n)-\Pr_{b_n} (T_z> t) |=0.
                \end{split}
\end{equation}
Hence it follows from \eqref{eq: applyingCor3.9} that for all $c\in (0,6)$
\begin{equation}
\label{eq: 5.5}
\lim_{n\to \infty}  \ | \dsepn(6n+\lfloor cn \rfloor) - \Pr_{b_{n}}[T_{z}> cn] \ | =0.
\end{equation}
This and \eqref{eq: comparingBtoAbis} yield that for any $0<\eps \le 1/4 $ 
\begin{equation}
\lim_{n\to \infty} \dsepn(\lfloor sn \rfloor)=\begin{cases} 1 &\text{ if } s\le 6, \\
                                e^{-(s-6)/2 } &\text{ if } s\in[6,12),\\
                                0 &\text{ if } s> 12.
                                \end{cases}
\end{equation}
Hence there is no separation cutoff. Moreover,
$$ \sup_{0<\eps<1/2} \liminf_{n \to \infty}\frac{\sepneps}{\sepn(1-\eps)}
= 2 .$$

We now describe a variant of the previous example which is a nearest neighbor lazy weighted random walk 
on a bounded degree graph with bounded transition probabilities.
\begin{example}
\label{ex: 2}
Let $\Omega_n=A \cup B \cup C \cup \{z \} $, where 
\begin{equation}
 \begin{split}
  A=A_{n}&:=\{a_{1},a_{2},\ldots,a_{2n}=a \},\\
  B=B_{n}&:=\{b_{1},b_{2},\ldots,b_{2n}=b\},\\
  C=C_{n}&:=\{c_{1},c_{2},\ldots,c_{n-1}\}.
 \end{split}
\end{equation}
For notational convenience we write $a_{0}:=z=:b_{0}=c_0$ and $c_{n}=b_{n}$. Consider
the following transition matrix
\begin{itemize}
\item
$P_n(x,x)=3/4$ for all $x \in \Omega_n \setminus C$ 
and $P_n(c_i,c_i)=1/2$ for all $i\in \{1,\dots,n-1\}$,
\item
$P_n(a_{2n},a_{2n-1})=1/4=P_n(b_{2n},b_{2n-1})$, 
\item  $2P_n(a_i,a_{i+1})=1/6=P_n(a_{i},a_{i-1})$, for all $1 \le i <2n$,
\item
$2P_n(b_i,b_{i+1})=1/6=P_n(b_{i},b_{i-1})$, for all $ i \in [2n-1] \setminus \{n\} $,
\item $2P_n(c_i,c_{i+1})=1/3=P_n(c_{i},c_{i-1})$, for all $1 \le i \le n-1$,
\item $P_n(b_{n},b_{n+1})=P_n(b_{n},c_{n-1})=P_n(b_{n},b_{n+1})=1/12$,
\item $P_n(z,a_1)=P_n(z,b_1)=P_n(z,c_1)=1/12$.
\end{itemize}
\end{example}

\begin{figure}[h]
\begin{center}
\leavevmode
\epsfxsize =12 cm
\psfragscanon
\epsfbox{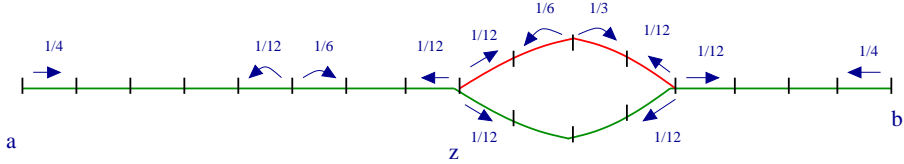}
\end{center}
\caption{\label{fig:ex2} 
A schematic representation of the transition rates for Example \ref{ex: 2} for $n=4$. When at a state of degree two or three (other than $z$), conditioned on making a non-lazy step, the chain moves away from (resp.~towards) $z$ with conditional probability $1/3$ (resp.~$2/3$).
For vertices of degree $2$: along the green edges, rates away from and towards the center of mass $z$ 
are equal respectively to $1/12$ and $1/6$ and  along the red edges 
they are equal to  $1/6$ and $1/3$, respectively.
The transitions away from vertices of degree $1$ and $3$ are given on the figure.}
\end{figure}

 States $a_{2n}$, $b_{2n}$ and $z$ play here the same respective roles as $a_{n}$, $b_{n}$ and $z$  in the previous example.
A simple calculation (similar to (\ref{eq: 4.3})) yields that 
$$\lim_{n\to \infty} \pin (z)=2/7.$$ We argue that for all $t \ge 0$ and $i \in [2n]$
\begin{equation}
\label{eq: cond2'}
\max\left(\Pr_{c_{i}}[T_z >t],\Pr_{b_{i}}[T_z >t]\right) \le \Pr_{a_i}[T_z>t] \le \Pr_{a_{2n}}[T_z>t]
\end{equation}
In particular,
\begin{equation}
\label{eq: cond3}
\forall t\ge 0, \quad \max_{x \in \Omega_n} \Pr_{x}[T_z >t]=\Pr_{a_{2n}}[T_z>t]. 
\end{equation}
Since the hitting time of $z$ under $\Pr_{a_{2n}}$ is concentrated around
time $t=24n$, by Proposition \ref{prop: crittv} the sequence exhibits
total-variation cutoff around time $24n$.

The last inequality in  \eqref{eq: cond2'} is trivial.
For the first one we consider the case where $P_n$ is replaced by $P'_n$ 
which satisfies $2P'_n(c_i,c_{i+1})=1/3=P'_n(c_{i},c_{i-1})$ and $P_n'(c_i,c_i)=1/2$ for $1\le i\le n-1$ and $P'(x,y)=P(x,y)$ 
elsewhere.
As adding extra laziness increases stochastically the hitting time $T_z$ (as in Remark \ref{rem: distance} consider the law of $\gamma$, the non-lazy path performed by the chain by time $T_z$; Clearly it is invariant under this transformation, while the conditional law of $T_z$, given $\gamma$, can only increase, stochastically),
\begin{equation}
  \Pr_{b_{i}}[T_z >t] \le \Pr'_{b_{i}}[T_z >t] = \Pr_{a_i}[T_z>t],
\end{equation}
(where $\Pr'$ denotes the distribution of the modified chain with the increased holding probability on $C_n$) and the same holds when $b_i$ is replaced by $c_i$.

\medskip

 To prove that $b_{2n}$ is the vertex from which the hitting time of $z$ is the largest, 
we need to prove the following two inequalities valid for $i \in \{1,\dots,n\}$
\begin{equation}
 \Pr_{c_i}[T_z>t] \le  \Pr_{b_i}[T_z>t] \quad \text{ and } \quad  \Pr_{b_i}[T_z>t] \le \Pr_{b_{i+n}}[T_z>t].
\end{equation}
Both can be proved by coupling arguments. For the first one, we can couple the non-lazy path of the chains starting from $b_i$ and $c_i$ 
until they reach either $b_n$ or $z$ (the second being at position $c_j$ when the first is at position $b_j$), and then in the case 
they reach $b_n=c_n$ let them evolve together until they reach $z$. The larger laziness on the path starting from $c_i$ until the merging time,
implies stochastic domination.
For the second inequality, the case $i=n$ follows from fact that starting from $b_{2n}$ the chain has to go through $b_n$ before reaching $z$.
For $i<n$, we can couple the chain starting from $b_i$ and $b_{i+n}$ until the pair of chains reaches either $(b_n,b_{2n})$ or $(z,b_n)$ (the second chain 
being at position $b_{j+n}$ when the first is at position $b_j$), and conclude using the case $i=n$.

As in the previous example, we can apply Proposition \ref{prop: critsep}. 
The reason why separation cutoff does not occur is that when starting from $b_{2n}$,
the hitting time $T_z$ is not concentrated. Indeed it is concentrated around $18n$ under the conditioned probability measure 
 $\Pr_{b_{2n}}[ \cdot  \mid X_{T_{z}-1}=c_1 ]$, while it is concentrated around $24n$ under  $\Pr_{b_{2n}}[\cdot
\mid X_{T_{z}-1}=b_1]$. As by symmetry 
$$ \Pr_{b_{2n}}\left[ X_{T_{z}-1}=c_1 \right]=\Pr_{b_{2n}}\left[ X_{T_{z}-1}=b_1 \right]=\frac{1}{2},$$
this yields 

\begin{lemma}
 We have 
 \begin{equation}
\lim_{n\to \infty}  \Pr_{b_{2n}}[T_{z}\ge sn ]=\begin{cases} 1 \text{ if } s< 18,\\
  1/2   \text{ if } s\in (18,24),\\
   0   \text{ if } s>24.
                           \end{cases}
 \end{equation}
 \end{lemma}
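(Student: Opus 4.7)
The plan is to decompose, via the strong Markov property at $b_n$, the hitting time as $T_z = T^{(1)} + T^{(2)}$, where $T^{(1)}$ is the hitting time of $b_n$ starting from $b_{2n}$ and $T^{(2)}$ is the hitting time of $z$ starting from $b_n$. These two random variables are independent, and the strategy is to further split $T^{(2)}$ according to the exit vertex $X_{T_z-1} \in \{b_1, c_1\}$, which, by the symmetry claim already stated just above the lemma, is $b_1$ or $c_1$ with equal probability $1/2$.

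To justify that symmetry rigorously, one observes that at every interior vertex of both branches $\{b_{n-1},\ldots,b_1\}$ and $\{c_{n-1},\ldots,c_1\}$, the non-lazy transition probabilities (conditional on a jump) are identical: probability $2/3$ towards $z$ and $1/3$ away from $z$. Decomposing the trajectory from $b_n$ into i.i.d.\ excursions via the strong Markov property at successive visits to $b_n$, each excursion has one of three equally likely types according to its first non-lazy step (towards $b_{n+1}$, $b_{n-1}$, or $c_{n-1}$); the first type necessarily returns to $b_n$ because from $b_{n+1}$ the vertex $z$ can only be reached through $b_n$, and the two remaining types hit $z$ before returning to $b_n$ with the same probability $p$ (a gambler's ruin computation gives $p = 2^{n-1}/(2^n-1) \to 1/2$). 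Hence, the probability that the chain eventually leaves $b_n$ through the $C$-side equals the probability it leaves through the $B$-side.

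Next I would establish concentration. The variable $T^{(1)}$ is a sum of $n$ independent hitting times $T_{b_i \to b_{i-1}}$ for $i = n+1, \ldots, 2n$, each of which is the hitting time of $b_{i-1}$ from $b_i$ for a biased lazy random walk with drift $1/12$ towards $b_n$; these have mean $12$ and finite variance, so the CLT gives $T^{(1)} = 12n + O(\sqrt{n})$ in probability. For $T^{(2)}$, the ``failed'' excursions from $b_n$ (those returning to $b_n$) are geometric in number with parameter $2p/3$ bounded away from zero, and each has bounded expected duration, so their total contribution is $O(1)$ in probability. The successful excursion via the $C$-branch, conditioned on reaching $z$ before returning to $b_n$, has duration equal (up to lower-order terms) to a sum of $\Theta(n)$ i.i.d.\ geometric dwell-times of mean $2$ (since the holding probability on $C$ is $1/2$ and a biased walk with drift $1/3$ per non-lazy step covers distance $n$ in about $3n$ non-lazy steps), producing $6n + O(\sqrt n)$; the analogous computation on the $B$-branch yields $12n + O(\sqrt n)$ (mean dwell-time $4$ there). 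The conditioning on non-return to $b_n$ is cleanly handled by Doob's $h$-transform, which converts the conditioned biased walk into an unconditioned biased random walk on a truncated segment, reducing concentration to the CLT for a sum of i.i.d.\ geometric dwell-times.

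Combining the three ingredients: conditional on exit via $c_1$ (probability $1/2$) one has $T_z = 18n + o(n)$ in probability, and conditional on exit via $b_1$ (probability $1/2$) one has $T_z = 24n + o(n)$ in probability. Therefore $\Pr_{b_{2n}}[T_z \ge sn]$ converges to $\tfrac{1}{2}\ind_{\{s \le 18\}} + \tfrac{1}{2}\ind_{\{s \le 24\}}$ for every $s \notin \{18, 24\}$, which is exactly the three cases in the lemma. The main technical obstacle I anticipate is the concentration of the successful excursion's duration conditioned on avoiding $b_n$ until hitting $z$; the cleanest route is via the Doob $h$-transform, which converts the conditional law into that of another unconditioned biased random walk whose hitting time is a sum of i.i.d.\ dwell-times, allowing a direct CLT argument.
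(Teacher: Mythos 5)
Your proposal is correct and follows essentially the same route as the paper's (sketched) argument: condition on the exit vertex $X_{T_z-1}\in\{b_1,c_1\}$, which by symmetry of the embedded jump chain occurs with probability $1/2$ each, and establish concentration of $T_z$ around $18n$ resp.\ $24n$ by surgery reducing the hitting time to sums of independent variables plus the law of large numbers, which is exactly the exercise the paper leaves to the reader. The only slight imprecision is that the Doob $h$-transform of the conditioned successful excursion is not literally an unconditioned biased walk (its transition probabilities agree with those of the biased walk only up to corrections decaying geometrically in the distance from $b_n$), but this is harmless for the concentration argument.
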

While this result is rather elementary (we use some surgery to compare $T_z$ with a sum of independent variables, 
and then the law of large number for this sequence),
the proof in full detail is long to expose (c.f.~\cite[Example 8.1]{cf:Basu}) and we choose to leave it as an exercise.
Applying Proposition \ref{prop: critsep} for an adequate choice of sets and states (here $(a_{2n},b_{2n},A_n,B_n \cup C_n)$ plays the role of $(a_n,b_n,A_n,B_n)$ 
from Proposition \ref{prop: critsep}) yields
 \begin{equation}
 \lim_{n\to \infty} \dsepn(sn) =\begin{cases} 1 \text{ if } s< 42,\\
  1/2   \text{ if } s\in (42,48),\\
   0   \text{ if } s>48.
                           \end{cases}
 \end{equation}
 In particular, there is no cutoff in separation.

\section{Separation cutoff without total variation cutoff example}
\label{sec: Sepcutoff}
In the following example the analysis of the sharp transition of $\dsepn (t)$ is reduced to the analysis of the behavior of sum of i.i.d.~random variables in the large deviation regime. The analysis below is too coarse for the purpose of determining the width of the cutoff window. We later present a refined analysis for Example \ref{ex: 5} (which is the bounded degree un-weighted version of Example \ref{ex: 3}) in \S~\ref{sec: Rem 1.6}, which shows that in fact $\sepn (\gep)-\sepn (1-\gep) \le C |\log \gep |$, for some absolute constant $C>0$. The  analysis in \S~\ref{sec: Rem 1.6} is built upon the analysis of Example \ref{ex: 3} below, as it relies (in a non-quantitative manner) on the fact that certain large deviation estimates hold uniformly over compact sets (the identity of the large deviation rate function is not important for the analysis in \S~\ref{sec: Rem 1.6}).\begin{example}
\label{ex: 3}
Let $M \ge 10$ be a fixed integer whose exact value shall be determined later. Consider the state space $\Omega_{n}=A \cup B \cup \{z\} \cup C \cup D \cup \{z' \} $, where $A=A_{n}:=\{a_{Mn}=a,a_{Mn-1},\ldots,a_1
\}$, $B=B_{n}:=\{b_{Mn}=b,b_{Mn-1},\ldots,b_{1}
\}$, $C=C_{n}:=\{c_{1},c_{2},\ldots,c_{n-1}\}$ and $D=D_{n}:=\{d_{1},d_{2},\ldots,d_{n-1}\}$. We use the following notational convention:
$a_{0}=b_{0}:=z'=:c_{n}=d_{n}$ and $d_{0}:=z=:c_{0}$. Consider
the following transition matrix
\begin{itemize}
\item
$P_{n}(i,i)=\begin{cases}3/4 & i \in C, \\
1/2 & \text{otherwise.}
\end{cases}$
\item
$P_{n}(a_{Mn},a_{Mn-1})=1/2=P_{n}(b_{Mn},b_{Mn-1})$.
\item
$P_{n}(z,c_{1})=1/4=P_{n}(z,d_{1})$.
\item
$P_{n}(z',c_{n})=P_{n}(z',d_{n})=1/6=2P_{n}(z',a_{1})=2P_{n}(z',b_{1})$.
\item
$P_n(a_i,a_{i-1})=P_{n}(b_i,b_{i-1})=P_n(d_j,d_{j-1})=2P_{n}(c_{j},c_{j-1})=1/3.$\\
$P_n(a_i,a_{i+1})=P_{n}(b_i,b_{i+1})=P_n(d_j,d_{j+1})=2P_{n}(c_{j},c_{j+1})=1/6$,\\ 
for all $i \in [Mn-1] $ and $j \in [n]$.
\end{itemize}
\end{example}

\begin{figure}[h]
\begin{center}
\leavevmode
\epsfxsize =12 cm
\psfragscanon
\epsfbox{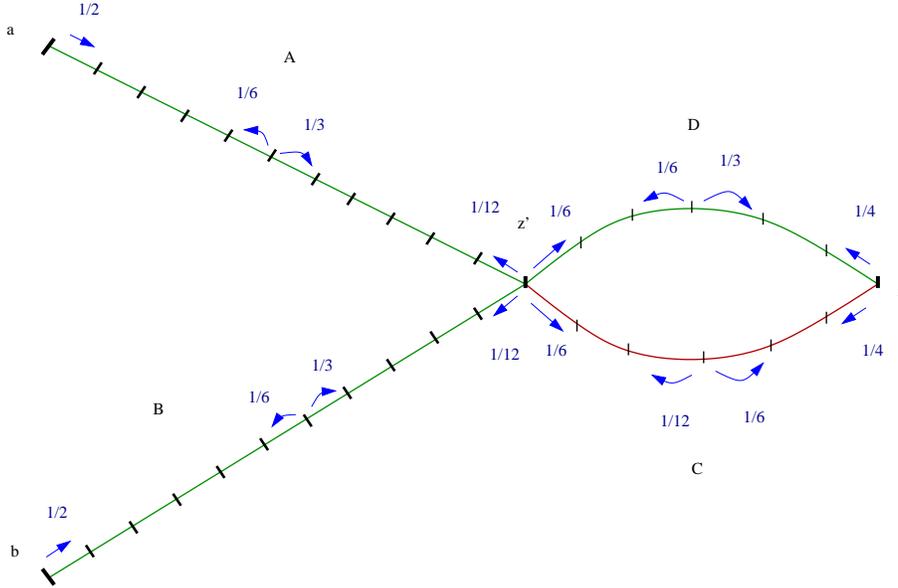}
\end{center}
\caption{\label{fig:ex3} 
A schematic representation of the transition rates for Example \ref{ex: 3}. When at a state of degree two or four (other than $z$), conditioned on making a non-lazy step, the chain moves away from (resp.~towards) $z$ with conditional probability $1/3$ (resp.~$2/3$).
The transition rates away from and towards the center of mass $z$, from degree two states, are equal respectively to $1/6$ and $1/3$, except on the segment $C$, due to increased holding probability. 
The transition rates away from the rest of the states are specified in the figure.}
\end{figure}

This chain is a modification of Aldous' example (which was discussed in \S~\ref{sec: overview}). 
The difference lies in the introduction of an additional branch $B$ to the graph. This branch has no effect on the 
total-variation profile of the convergence to equilibrium, but crucially modifies the separation profile, as $P_{n}^t(a,b)/\pi_n(a)$ 
(recall $a:=a_{nM}$ and $b:=b_{nM} $) is the quantity that takes 
the longest time to reach equilibrium (i.e.~up to negligible correction $(x,y)=(a,b)$ maximizes $1-P_{n}^t(x,y)/\pi_{n}(y)$ for all relevant $t$).

\medskip

A standard calculation yields that
\begin{equation}
\label{eq: 5.1}
\lim_{n \to \infty} \pin(z)=2/11 \text{ and }  \lim_{n \to \infty} 2^n\pin(z')=6/11.
\end{equation}
By symmetry, the law of  $T_{z}$ starting, resp., from $a_i$ and $b_i$ is identical for all $i$ and by the Markov property, it is stochastically increasing in $i$ (for $i>j$, to reach $z$ from $a_{i} $ (resp.~$b_{i}$) the chain must first hit $a_j$ (resp.~$b_j$)). 
Only minor efforts are necessary to prove rigorously that  $a$ and $b$ are the points in $A \cup B \cup D $ 
for which the hitting time $T_{z}$ is stochastically the largest (the coupling arguments are similar to the one developed in the previous section),
while for any choice of $M> 1$,  
$$\limsup_{n \to \infty} \sup_{t} (\max_{c \in C}\Pr_c[T_z>t]- \Pr_a[T_z>t]) \le 0.$$ Due to the different holding probabilities along the two branches, $C,D,$
the distribution of $T_{z}$ under $\Pr_{a}$ is not concentrated around its mean.
Thus, by Proposition \ref{prop: crittv}, there is no total-variation cutoff, and the total-variation asymptotic profile is given by
\begin{equation}
\label{eq: tvbnds}
 \lim_{n\to \infty}d^{(n)}(6sn):=
\begin{cases}
0 \quad &\text{ if } s< M+1,\\
1/2 \quad &\text{ if } s\in (M+1,M+2),\\
1  \quad &\text{ if } s>M+2.
\end{cases}
\end{equation}
To show that there is separation cutoff, it suffices to prove that
\begin{equation}
\label{eq: reductionex3}
\liminf_{n\to \infty}\inf_t \min_{x,y \in \Omega_n } P_{n}^t(x,y)/\pi_{n}(y)- \min \left( 1, P_{n}^t(a,b)/\pi_{n}(b) \right)=0,
\end{equation}
and to show that
 $\min(1,P_{n}^t(a,b)/\pi_{n}(b))$
displays an abrupt transition.
Let us start with the second point. According to Lemma \ref{lem: pathsdecomposition} (first inequality of \eqref{eq: septhroughz1}), we have
\begin{equation}\label{eq:lineq}
\bbP[T^{a,b}_{z'} = t]/\pi_n(z') \le P_{n}^t(a,b)/\pi_{n}(b) \le \bbP[T^{a,b}_{z'} \le t]/\pi_n(z')
\end{equation}
By definition $T^{a,b}_{z'}$ is the sum of two independent hitting times of a biased random walk on a segment of length $Mn$ (from one end-point towards the one towards which there is a bias).
We make some efforts to compute the large deviation behavior of this sum.

\begin{lemma}\label{largedev}
Consider a lazy random walk $(Z_t)_{t\ge 0}$ on $\bbZ_+$ with rates $p(x,x+1)=1/3$, $p(x+1,x)=1/6$, $x\in \bbZ_+$.
Let $\cT_N$ be the first hitting time of $N$.
We have
 \begin{equation}\label{oops}\begin{split}
  \lim_{N\to \infty} \frac {1}{N}\log \bbP[\cT_{N}= \lfloor sN \rfloor ]=  \lim_{N\to \infty} \frac {1}{N }\log \bbP[\cT_N\le sN]&=-\Psi(s), \text{ for } s\in[1,6]\\
  \lim_{N\to \infty} \frac {1}{N}\log \bbP[\cT_{N}= \lfloor sN \rfloor ]=  \lim_{N\to \infty} \frac {1}{N }\log \bbP[\cT_N\ge sN]&=-\Psi(s), \text{ for } s\ge 6,
\end{split}
  \end{equation}
 where $\Psi$ is the following Legendre transform 
 \begin{equation}
  \Psi(s):=\sup_{\gl\in (-\infty,\infty)} \left[ \gl s- \log \tf(\gl) \right],
 \end{equation}
 where 
 \begin{equation*}
  \tf(\gl):=\begin{cases}
             \infty   & \text{if } \gl > \log (6/(3+2\sqrt{2})),\\
            \frac{6e^{-\gl}-3}{2}- \frac{\sqrt{(6e^{-\gl}-3)^2-8}}{2} \quad &\text{ if } \gl \le \log (6/(3+2\sqrt{2})).  
            \end{cases}
 \end{equation*}
Moreover, $\Psi(6)=0=\Psi'(6)$ and the second derivative $\Psi''(6)$ is positive.
 \end{lemma}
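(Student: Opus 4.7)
The proof reduces to a moment-generating-function computation followed by a standard large-deviation argument. By the strong Markov property, $\cT_N=\sum_{k=0}^{N-1}\tau_k$, where $\tau_k$ is the first passage time from $k$ to $k+1$ and the summands are independent. A one-step conditioning at the starting vertex yields, for $k\ge 1$, the recursion
\begin{equation*}
h_k(\lambda)=\frac{e^{\lambda}/3}{1-e^{\lambda}/2-e^{\lambda}h_{k-1}(\lambda)/6},
\end{equation*}
where $h_k(\lambda):=\bbE[e^{\lambda\tau_k}]$. The fixed point of this iteration satisfies the quadratic $\tf=e^{\lambda}(\tf/2+1/3+\tf^2/6)$; selecting the root with $\tf(0)=1$ produces the explicit formula of the statement. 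The effective domain $\{\lambda:\tf(\lambda)<\infty\}$ is determined by non-negativity of the discriminant $(6e^{-\lambda}-3)^2-8$, equivalently $\lambda\le\log(6/(3+2\sqrt 2))$.

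A direct computation shows the derivative of the iteration at the fixed point equals $\tf(\lambda)^2/2$, strictly less than $1$ throughout the interior of the effective domain (since $\tf(\lambda)=\sqrt 2$ is attained only at the right endpoint). Consequently $h_k\to\tf$ geometrically fast, so $\Lambda(\lambda):=\lim_N\frac{1}{N}\log\bbE[e^{\lambda\cT_N}]=\log\tf(\lambda)$; moreover $\Lambda$ is smooth, strictly convex, and steep at the right endpoint of its effective domain (its derivative blowing up there). The Gärtner--Ellis theorem then produces the full LDP for $\cT_N/N$ with good rate function $\Psi=\Lambda^{*}$. Convexity of $\Psi$ with unique zero at $s=6$ implies that the infimum of $\Psi$ on $(-\infty,s]$ (resp.\ $[s,\infty)$) is attained at the endpoint $s$ for $s\in[1,6]$ (resp.\ $s\ge 6$), yielding the two cumulative statements of \eqref{oops}.

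The local version $\lim\frac{1}{N}\log\bbP[\cT_N=\lfloor sN\rfloor]=-\Psi(s)$ is then obtained by exponential tilting. The upper bound follows from Chernoff's inequality $\bbP[\cT_N=k]\le e^{-\lambda k}\prod_{j<N}h_j(\lambda)$ optimized over $\lambda$. For the matching lower bound, tilt each $\tau_k$ independently by the unique optimizer $\lambda^{*}(s)$ of the Legendre transform; the tilted summands are independent, lattice-supported with span $1$, and both their means and variances converge (as $k\to\infty$, geometrically) to $s$ and to $\Lambda''(\lambda^{*}(s))>0$ respectively. A local central limit theorem for sums of independent (but not identically distributed) lattice variables (e.g.\ Petrov's version) then gives $\bbP^{\lambda^{*}(s)}[\cT_N=\lfloor sN\rfloor]=\Theta(N^{-1/2})$, and reverting the tilt yields the matching lower bound. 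Handling the non-identical distributions is the principal technical point; the geometric convergence $h_k\to\tf$ makes the effect of the boundary state $0$ negligible on the log-scale.

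The properties at $s=6$ are pure Legendre-duality. Since the mean first passage time of the bulk walk (with drift $1/6$) is $6$, i.e.\ $\tf'(0)=6$, the maximizer at $s=6$ is $\lambda^{*}(6)=0$ so $\Psi(6)=0$. By the envelope theorem $\Psi'(s)=\lambda^{*}(s)$ hence $\Psi'(6)=0$, and $\Psi''(6)=1/\Lambda''(0)>0$ since $\Lambda''(0)$ equals the variance of the generic hitting-time increment, which is strictly positive.
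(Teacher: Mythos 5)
Your proposal is correct, and its skeleton coincides with the paper's: the same one-step conditioning produces the quadratic $\tf=e^{\gl}(1/3+\tf^2/6+\tf/2)$, the root is selected by $\tf(0)=1$, the scaled log-moment generating function is identified as $\log\tf$, G\"artner--Ellis gives the LDP, and the properties at $s=6$ come from Legendre duality with $\bE[\cT'_1]=6$. The two places where you genuinely diverge are worth noting. First, to handle the non-identically distributed passage-time increments caused by the boundary at $0$, the paper compares with the walk on all of $\bbZ$, whose increments are exactly i.i.d.\ (so Cram\'er applies), and then uses stochastic domination, convergence in law, dominated convergence and Ces\`aro to transfer the limit of $\frac1N\log\bE[e^{\gl\cT_N}]$; you instead analyze the MGF recursion $h_k$ directly and prove contraction at the fixed point ($G'(\tf)=\tf^2/2<1$ in the interior of the domain). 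Your route is self-contained and quantitative (geometric convergence), the paper's is softer but shorter; both need a word at the critical $\gl$, where your contraction degenerates but monotone convergence still suffices for the Ces\`aro limit. Second, for the local estimate on $\bbP[\cT_N=\lfloor sN\rfloor]$ the paper exploits laziness via the ratio bound $\bbP[\cT_N=t+1]/\bbP[\cT_N=t]\ge 1/2$, which converts the interval LDP into point estimates with almost no work (and also yields the uniformity over compact $s$ claimed at the end of the paper's proof); you use exponential tilting plus a local CLT for independent non-i.i.d.\ lattice variables. Your method is heavier (the uniform LCLT for the tilted, non-identical summands is the technical crux you correctly flag) but buys more: a $\Theta(N^{-1/2})$ prefactor and no reliance on laziness. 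One small point common to both write-ups: at the endpoint $s=1$ the generic LDP lower bound over the open set degenerates (the rate is $+\infty$ to the left of $1$), but the claim there is trivially checked since $\bbP[\cT_N\le N]=\bbP[\cT_N=N]=3^{-N}=e^{-N\Psi(1)}$.
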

 
 \begin{proof}
  Let $X'$ be the random walk with the same rates on $\bbZ$, and $\cT'_N$ be the first hitting time of $N$ for this walk.
By the Markov property $\cT'_N$ is the sum of $N$ IID copies of $\cT'_1$ and hence we can use Cram\'er's Theorem (see e.g.\ \cite[Chapter 2]{cf:DZ}) 
to obtain the large deviation for $\cT'_N$ below its mean.
If one decomposes according to the value of $X'_1$ we notice that the Laplace transform  $\tf(\gl):=\bE[e^{\gl\cT'_1}]$ satisfies
\begin{equation}\label{degrau}
 \tf(\gl)=e^{\lambda}\left(\frac{1}{3}+\frac{1}{6}\tf(\gl)^2+\frac{1}{2} \tf(\gl)\right).
\end{equation}
and we deduce the right value for $\tf(\gl)$ from this relation  (the fact that $\tf(0)=1$ and continuity of $\tf$ indicates which root to choose in \eqref{degrau}). Note that the derivative of $\log \tf(\gl)$ at zero is equal to $6$
which implies that $\Psi(6)=0$ (Alternatively, $\mathbb{E}[\cT'_1]=6$, hence by Cram\'er's Theorem it must be the case that $\Psi(6)=0$). As $\Psi$ is non-negative (since $\log \tf(0)=0$), it must be the case that it attains a global minimum at $6$, which implies that $\Psi'(6)=0$ and $\Psi''(6)>0$.
\medskip

Now, note that $\cT_i-\cT_{i-1}$ are independent variables, which are dominated by $\cT'_1$ and who converge (when $i$ tends to infinity) to $\cT'_1$ in law.
In particular, by dominated convergence (and Cesaro's Theorem) we have that for any $\gl \in   (-\infty,\log \frac{6}{3+2\sqrt{2}}]$, 
\begin{equation}
 \lim_{N\to \infty} \frac{1}{N}\log \bE[e^{\gl \cT_N}]=\tf(\gl).
\end{equation}
and thus in that case the result follows from G\"ardner Ellis Theorem \cite{cf:DZ}.
Finally, the local large deviation estimate (the result on $\bbP[\cT_{N}= \lfloor sN \rfloor]$) can be deduced from the large deviation principle
using the fact that due to laziness
\begin{equation}\label{lazy}
\frac{\bP[\cT_N=t+1]}{\bP[\cT_N=t]}\ge \frac{1}{2}.
\end{equation}
We leave it as an exercise.
Note moreover that the convergence \eqref{oops} holds uniformly on $s\in K$ for any compact $K$ (it can be deduced e.g.\ from \eqref{lazy}).
\end{proof}

A consequence of \eqref{eq:lineq} and the previous lemma in conjunction with \eqref{eq: 5.1} and Lemma \ref{lem: pathsdecomposition} 
is that 
if $s_M$ is given by $2Ms^*$, where $s^*$ is the unique solution in $(0,6)$ of 
\begin{equation}\label{eq:theeq}
 2M\Psi\left(s \right)=\log 2,
\end{equation}
then
\begin{equation}
 \lim_{n\to \infty} \frac{P_{n}^{\lfloor sn \rfloor}(a,b)}{\pi_{n}(b)}=\begin{cases} 0, \text{ if } s<s_M,\\
                                                         \infty, \text{ if } s\in (s_M,12M].
                                                        \end{cases}
 \end{equation}
An order $2$ Taylor expansion  of \eqref{eq:theeq} around $s=6$ readily shows that $6-s^*=\Theta(1/\sqrt{M})$ (i.e.~$12M-s_M=\Theta(\sqrt{M})$) for large $M$. 
In particular, $s_M\ge 11 M$  for $M$ sufficiently large.
What is left to do in order to prove separation cutoff is to check that for any $s\in (s_M,12M]$ (in fact, by monotonicity it suffices to consider only $s$ arbitrarily close to $s_M$)
we have 
$$\liminf_{n\to \infty}\min_{x,y} P_{n}^{\lfloor sn \rfloor}(x,y)/\pi_{n}(y))\ge  1. $$
In what follows we let $s \in (s_M,12M] $ be fixed.
\medskip

We first use Lemma \ref{lem: pathsdecomposition} to reduce to the case of $x=a_i$, $y=b_j$, $i,j\ge Mn/2$.
Set $E:=\{a_i:i \ge \frac{Mn}{2}  \} \cup \{b_i:i \ge \frac{Mn}{2}  \}$. 
By \eqref{eq: septhroughz1} for any $x \in \Omega_n$ and $y \in \Omega_n \setminus E$ 
we have 
\begin{equation}
\label{eq: onlyfarmatters}
\frac{P_n^{\lceil s n \rceil} (x,y)}{\pin(y)} \ge \mathbb{P}\left[T_{z}^{x,y}\le 11 Mn\right],
\end{equation}
and it is a simple exercise to show that (when $M$ is sufficiently large) 
\begin{equation}
 \lim_{n\to \infty}\min_{(x,y)\in \gO_n \times (\Omega_n \setminus E)}\mathbb{P}\left[T_{z}^{x,y}\le 11 Mn\right]=1.
\end{equation}

Finally, to treat the case $x=a_i$, $y=b_j$ (the cases $(a_i,a_j)$ or $(b_i,b_j)$ are treated in the same manner), $i,j\ge Mn/2$ , we use again
Lemma \ref{lem: pathsdecomposition} which asserts that
\begin{equation}\label{eq:oneortheother}
\frac{P_n^{\lfloor s n \rfloor} (a_i,b_j)}{\pi_n(b_j)}\ge \max \left(  \frac{\bbP[T^{a_i,b_j}_{z'}= \lfloor sn \rfloor ]}{\pi_n(z')},\bbP[T^{a_i,b_j}_{z'}\le \lfloor sn \rfloor ]  \right)
\end{equation}
Note that  
$T_{z'}^{a_i,b_j}$  is (cf.~the proof of Lemma \ref{largedev}) a sum of  $i+j$ independent random variables (not identically distributed)
and that
\begin{equation}
\lim_{n\to \infty} \sup_{i,j\ge Mn/2} \left | \frac{1}{i+j} \log \bbE[e^{\gl T^{a_i,b_j}_{z'}}]-\log \tf(\gl)\right|=0.
\end{equation}
One deduces from G\"ardner Ellis Theorem \cite{cf:DZ} and the following consequence of laziness
\begin{equation}
\frac{\bbE[T^{a_i,b_j}_{z'}=t+1]}{\bbE[T^{a_i,b_j}_{z'}=t]}\ge 1/2,
\end{equation}
that for any $u\in (1,\infty)$,
\begin{equation}\label{eq:largedev}
 \lim_{n\to \infty} \sup_{i,j\ge Mn/2}  \left|- \frac{1}{i+j} \log \bbP[T^{a_i,b_j}_{z'}=\lfloor (i+j) u \rfloor] -\Psi(u) \right|=0,
\end{equation}
and the convergence holds uniformly on compact sets.
Now let us fix $\eta$ which satisfies 
$$\Psi\left( \frac{6s}{s+6\eta} \right) \le \frac{\log 2}{4M},$$
Using \eqref{eq:largedev}, there exists $\delta$ such that for all $n$ sufficiently large
,  for all $i,j\ge Mn/2$,
\begin{equation}\begin{split}\label{eq:dabound}
 \left( i+j\le \frac{sn}{6}+\eta n \right) \quad &\Rightarrow \bbP[T^{a_i,b_j}_{z'}\le \lfloor sn \rfloor ]\ge 1-e^{-\delta n},\\
\left( i+j\ge \frac{sn}{6}+\eta n \right) \quad  &\Rightarrow \log \bbP[T^{a_i,b_j}_{z'}= \lfloor sn \rfloor ]\ge -(i+j)\left[\Psi(\frac{sn}{i+j}) + \delta\right],
\end{split}\end{equation}
 Note that the l.h.s.\ in the second line satisfies
\begin{equation}\label{eq:control}
 (i+j)\left[\Psi(\frac{sn}{i+j})  +  \delta\right]\le 2Mn\left( \max\left[ \Psi\left(\frac{s}{2M}\right), \Psi\left( \frac{6s}{s+6\eta}\right)\right]+\delta\right).
\end{equation}
 As $2M \Psi\left(\frac{s}{2M}\right)<\log 2$ (since $s \in (s_M,12M]$) and $\delta$ can be chosen arbitrarily small, 
 \eqref{eq:dabound} (second line) and \eqref{eq:control} imply that for sufficiently large $n$, for any $i,j$ satisfying 
 $i+j\ge \frac{sn}{6}+\eta n$, we have
\begin{equation}
\bbP[T^{a_i,b_j}_{z'}= \lfloor sn \rfloor ]\ge 2^{-n(1-\delta)}.
\end{equation}
Combining this with \eqref{eq:dabound} (first line) and \eqref{eq:oneortheother} we can conclude that 
\begin{equation}
 \min_{i,j\ge Mn/2} \frac{P_n^{\lfloor s n \rfloor} (a_i,b_j)}{\pi_n(b_j)}\ge 1-e^{-\delta n}.
\end{equation}

%

\subsection{Concerning Remark \ref{rem:ration}}

Note that by performing a minor modification in the above construction we can bring the pre-cutoff ratio for total-variation to the largest possible value: $2$.
A way to achieve this is to make one of the branches linking $z'$ to $z$ much faster than the other 
(instead of only twice faster as in Example 3, we want the ratio of speeds to tend to infinity).

\medskip

What we can do is to make these branches of length $\lceil \sqrt{n}\ \rceil $ while $A$ and $B$ are of length $n$. Furthermore, we choose 
the speed on one branch to be $1/6$ while that one the other being 
$1/(6 \sqrt{n})$ by increasing the holding probability on this branch (see Figure \ref{fig:ratiotwo}). Using similar reasoning as in the analysis of Example \ref{ex: 3} one can show that for this construction there is separation cutoff around time $12n$ (note that here $-\log \pi_n(z')=\Theta(\sqrt{n}) $, which by \eqref{eq:lineq} implies that for $t_n:=\lceil(12-\eps)n\rceil$, $P_{n}^{t_{n}}(a,b)/\pi_{n}(b) \le \bbP[T^{a,b}_{z'} \le t_{n} ]/\pi_n(z')=o(1)$, for every $\eps >0$). 

We can also find a similar example with transition rates bounded uniformly from zero 
by considering two branches of 
different lengthes, but in that case the analysis turns out to be more intricate.

\begin{figure}[h]
\begin{center}
\leavevmode
\epsfxsize =12 cm
\psfragscanon
\psfrag{a}{\tiny $a$}
\psfrag{b}{\tiny $b$}
\psfrag{z}{\tiny $z$}
\psfrag{z'}{\tiny $z'$}
\psfrag{n}{\tiny $n$}
\psfrag{sqrtn}{\tiny $\sqrt{n}$}
\psfrag{1/3}{\tiny $1/3$}
\psfrag{1/6}{\tiny $1/6$}
\psfrag{1/6\sqrtn}{\tiny $1/(3\sqrt{n})$}
\psfrag{1/3\sqrtn}{\tiny $1/(6\sqrt{n})$}
\psfrag{1/12}{\tiny $1/12$}
\psfrag{1/8}{\tiny $1/8$}
\psfrag{1/4}{\tiny $1/4$}
\psfrag{1/2}{\tiny $1/2$}
\epsfbox{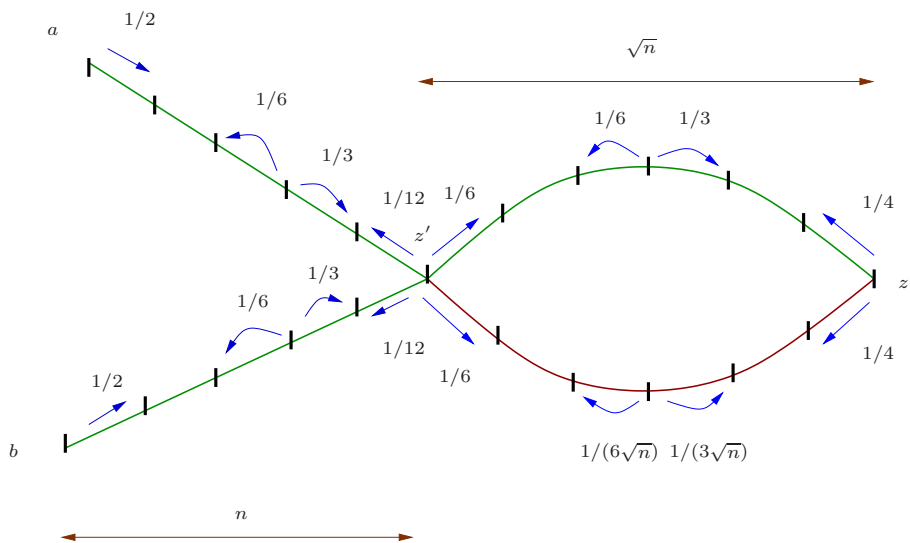}
\end{center}
\caption{\label{fig:ratiotwo} 
A modification of the graph size and of the holding probability along the slow branch as shown above yields a counter-example with  cutoff in separation and the maximal possible pre-cutoff ratio $2$
in total-variation.
}
\end{figure}

%
%
%

\section{Transforming Examples \ref{ex: 2} and \ref{ex: 3} into lazy simple random walk on bounded degree expander graphs}
\label{sec: LSRW}

\subsection{General comments and preliminaries}
\label{sec: generalizations} 
In this section we transform Examples \ref{ex: 2} and \ref{ex: 3} into lazy SRWs on a sequence of bounded degree expander graphs.
For this kind of walk, the equilibrium measure is $\pi(v)=\frac{\deg v}{\sum_u \deg u}$, and thus
no particular vertex can have the role of the ``center of mass" as in the previous examples.
Let us rewrite the definition of the Cheeger constant in this context.

\begin{definition}
Let $G=(V,E)$ be a finite connected graph. For every $S \subset V $ denote $C_{S}:=\sum_{v \in S}\deg v$. For any $S \subset V $ we define its {\em \textbf{edge boundary}} $\partial_{E}S $  to be the collection of edges having one vertex in $S$ and the other in $V \setminus S$. The Cheeger constant of lazy simple random walk on $G$ is defined as
$$\mathrm{ch}_{\mathrm{Lazy}}(G):=\min_{S:\pi(S) \le 1/2}|\partial_{E}S|/2C_S, $$
which coincides with Definition \ref{def: Cheeger} (see e.g.~\cite[Remark 7.2]{cf:LPW}).
We say that $G$ is a $c$-lazy expander if $\mathrm{ch}_{\mathrm{Lazy}}(G)>c $. We say that a sequence of finite graphs $(G_n)_{n \ge 1}$ is a family of $c$-lazy expanders if $\inf_n \mathrm{ch}_{\mathrm{Lazy}}(G_{n})>c
$.
\end{definition}

\medskip

In our new context, the center of mass is rather a set which contains a positive fraction of the vertices. We shall relate the mixing-time of the chain to the hitting time of this set. \textit{Mutatis mutandis}, 
the results of Section \ref{sec: Pre} and in particular Lemma \ref{lem: pathsdecomposition} can be adapted to this new context, but only if the 
set and the starting point satisfy a special relation:

\begin{definition}[Balanced sets]
\label{def: hitting}
For any $Z \subset \Omega$ we denote the {\em \textbf{hitting time}} of $Z$ by $T_{Z}:=\inf
\{t:X_t \in Z \} $.
\begin{itemize}
 \item  We say that $Z$ is \emph{\textbf{balanced}}
seen from $x\in \gO$ if for all $t$ such that $\Pr_x[T_{Z}=t]>0$, 
\begin{equation}
 \forall z\in Z, \quad \Pr_{x}[X_t=z
\mid T_{Z}=t]=\pi_{Z}(z),
\end{equation}
 where $\pi_{Z}(\cdot)=\frac{1_{\cdot \in Z}\pi (\cdot)}{\pi(Z)}$ is $\pi$ conditioned on the set $Z$.
 \item  We say that $Z$ is balanced seen from the set $A$ if it is balanced seen from $x$ for all $x\in A$.
 \item We define $T_{Z}^{x,y}$ to be a random
variable distributed like the sum
of two independent realizations of $T_Z$, once under $\mathrm{P}_x$ and once
under $\mathrm{P}_y$. That is, for all $t \ge 0$,
\begin{equation}
\mathbb{P}[T_{Z}^{x,y}=t]:=\sum_{k=0}^t\mathrm{P}_x[T_Z=k]\mathrm{P}_y[T_Z=t-k].
\end{equation}

\end{itemize}

\end{definition}

Note that sets are  not likely to be balanced by ``pure luck" and 
we will be careful to introduce a sufficient amount of symmetry when constructing our graphs, so that our center of mass will be balanced seen from many starting points.
However, this property cannot be satisfied for all starting points and we will have to 
deal with the remaining initial vertices separately (and show that they are irrelevant for determining the worst-case total-variation and separation distances), by using a crude $\ell_2$ estimate (Lemma \ref{lem: CS}).


\begin{lemma}
\label{lem: pathsdecompositions}
Let  $(\Omega,P,\pi)$ be a finite irreducible lazy reversible Markov
chain and consider $x,y
\in \Omega$, and $Z\in \gO$ which is balanced seen from both $x$ and $y$ 
\begin{itemize}
\item[(i)] For all $t \ge 0 $
we have 
\begin{equation}
\label{eq: septhroughz11}
P^t(x,y)/\pi(y) \ge \sum_{k \le t }\mathbb{P}[T_{Z}^{x,y}=k]\Pr^{t-k}_{\pi_Z}(Z)/\pi(Z)
\ge   \mathbb{P}[T_{Z}^{x,y} \le t]. 
\end{equation}
\item[(ii)]
If
$\mathrm{P}_{x}[T_Z<T_y]=1$ (i.e.~if every path from $x$ to $y$ goes through the set $Z$) then for all $t \ge 0$
we have that
\begin{equation}
\label{eq: septhroughz21}
\begin{split}
& P^t(x,y)/\pi(y) 
=\sum_{k \le t }\mathbb{P}[T_{Z}^{x,y}=k]\Pr^{t-k}_{\pi_Z}(Z)/\pi(Z)
 \\ & \le \mathbb{P}[T_{Z}^{x,y} \le t]+\frac{1}{2}t_{\mathrm{rel}} \max_{k\in \bbN} \bbP [T_{Z}^{x,y} =k ]\sqrt{(1-\pi(Z))/ \pi(Z)}.
\end{split}
\end{equation}
\end{itemize}
\end{lemma}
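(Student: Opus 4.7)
The plan is a first-passage decomposition through $Z$ combined with reversibility, which lets me ``time-reverse'' the portion of the path from $y$ back to $Z$. Write
\[
P^t(x,y)=\sum_{k=0}^t\Pr_x[T_Z=k,\,X_t=y]\;+\;\Pr_x[T_Z>t,\,X_t=y];
\]
the last term is nonnegative and vanishes exactly under the hypothesis of (ii) that every $x\to y$ path crosses $Z$. For each $k\le t$, apply the strong Markov property at $T_Z$, use the balanced-from-$x$ hypothesis to factor the conditional law of $X_{T_Z}$ as $\pi_Z$, and then invoke reversibility $\pi(z)P^{t-k}(z,y)=\pi(y)P^{t-k}(y,z)$ to rewrite $\Pr_x[T_Z=k,X_t=y]=\Pr_x[T_Z=k]\cdot(\pi(y)/\pi(Z))\,\Pr_y[X_{t-k}\in Z]$.

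Now decompose $\Pr_y[X_{t-k}\in Z]$ over $T_Z$ started at $y$ (no error term arises since $\{X_{t-k}\in Z\}\subseteq\{T_Z\le t-k\}$) and apply strong Markov together with balanced-from-$y$ to obtain $\sum_j\Pr_y[T_Z=j]\,\Pr_{\pi_Z}[X_{t-k-j}\in Z]$. Reassembling the double sum as a convolution through $m=k+j$ gives the master identity
\[
\frac{P^t(x,y)}{\pi(y)}=\frac{1}{\pi(Z)}\sum_{m=0}^t\mathbb{P}[T_Z^{x,y}=m]\,\Pr_{\pi_Z}[X_{t-m}\in Z]+\frac{\Pr_x[T_Z>t,\,X_t=y]}{\pi(y)}.
\]
Dropping the nonnegative error term yields the first inequality of (i), and this inequality becomes the equality in (ii) since the error vanishes there.

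The second inequality of (i) reduces to $\Pr_{\pi_Z}[X_s\in Z]\ge\pi(Z)$ for all $s\ge 0$. Decompose $\mathbf 1_Z=\pi(Z)\mathbf 1+g$ with $g\perp\mathbf 1$ in $L^2(\pi)$; laziness makes $P$ positive semidefinite (all eigenvalues lie in $[0,1]$), so $\langle g,P^s g\rangle_\pi\ge 0$ and $\pi(Z)\Pr_{\pi_Z}[X_s\in Z]=\langle\mathbf 1_Z,P^s\mathbf 1_Z\rangle_\pi=\pi(Z)^2+\langle g,P^s g\rangle_\pi\ge\pi(Z)^2$, as required.

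For the upper bound in (ii), set $h:=\mathbf 1_Z/\pi(Z)-\mathbf 1$, so $h\perp\mathbf 1$ and $\|h\|_\pi^2=(1-\pi(Z))/\pi(Z)$, and let $R(s):=\Pr_{\pi_Z}[X_s\in Z]/\pi(Z)-1=\langle h,P^s h\rangle_\pi\ge 0$. The master identity in (ii) becomes $P^t(x,y)/\pi(y)=\mathbb{P}[T_Z^{x,y}\le t]+\sum_m\mathbb{P}[T_Z^{x,y}=m]\,R(t-m)$. The spectral bound $R(s)\le\lambda_2^s\|h\|_\pi^2$ (valid on $\mathbf 1^\perp$), combined with extracting $c:=\max_k\mathbb{P}[T_Z^{x,y}=k]$ and a Cauchy--Schwarz estimate on the convolution (exploiting $\sum_m\mathbb{P}[T_Z^{x,y}=m]\le 1$ together with $\|P^{s/2}h\|_\pi\le\lambda_2^{s/2}\|h\|_\pi$ from PSDness), produces the claimed bound. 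The main anticipated obstacle is the careful bookkeeping of the two separate uses of the balanced hypothesis (once from $x$, once from $y$) and verifying cleanly that the error term vanishes under (ii); after that, the derivation is routine convolution bookkeeping plus a single spectral estimate.
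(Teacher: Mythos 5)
Your argument is, in substance, the paper's own proof: the same first-passage decomposition at $T_Z$ from $x$, the balanced hypothesis to replace the conditional law of $X_{T_Z}$ by $\pi_Z$, reversibility in the form $\Pr_{\pi_Z}^{s}(y)/\pi(y)=\Pr_{y}^{s}(Z)/\pi(Z)$, a second decomposition at $T_Z$ from $y$ (with no error term since $\{X_s\in Z\}\subseteq\{T_Z\le s\}$), and the observation that the leftover $\Pr_x[X_t=y,\,T_Z>t]$ is nonnegative and vanishes under the hypothesis of (ii). Your proof that $\Pr_{\pi_Z}^{s}(Z)\ge\pi(Z)$ via positive semidefiniteness of $P$ under laziness is exactly the paper's ``spectral decomposition and non-negativity of the eigenvalues,'' just written out in detail.

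The only point to be careful about is the quantitative error term in (ii). With your $h=\mathbf 1_Z/\pi(Z)-\mathbf 1$ and $R(s)=\langle h,P^s h\rangle_\pi\le\lambda_2^{s}\|h\|_\pi^{2}$, the convolution estimate you actually display gives $\sum_{k\le t}\bbP[T_Z^{x,y}=k]\,R(t-k)\le t_{\mathrm{rel}}\max_k\bbP[T_Z^{x,y}=k]\,(1-\pi(Z))/\pi(Z)$, which is \emph{not} the stated bound $\tfrac12 t_{\mathrm{rel}}\max_k\bbP[T_Z^{x,y}=k]\sqrt{(1-\pi(Z))/\pi(Z)}$ (it is weaker whenever $\pi(Z)<4/5$), and the ``Cauchy--Schwarz estimate on the convolution'' you invoke to bridge the gap is not spelled out; it is not clear it yields the square-root form. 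The paper gets the square root differently: it bounds $\Pr_{\pi_Z}^{s}(Z)-\pi(Z)\le\|\Pr_{\pi_Z}^{s}-\pi\|_{\TV}\le\tfrac12\lambda_2^{s}\|\pi_Z-\pi\|_{2,\pi}=\tfrac{\lambda_2^{s}}{2}\sqrt{(1-\pi(Z))/\pi(Z)}$ and then sums the geometric series (note that even this route, after dividing by $\pi(Z)$ in the convolution identity, produces the displayed constant only up to an extra factor $1/\pi(Z)$). Since every application has $\pi(Z)$ bounded away from $0$ and uses only that the error is $o(1)$ when $\max_k\bbP[T_Z^{x,y}=k]\to0$, the exact constant is immaterial; but you should either state the bound your estimates actually prove, or insert the TV/$\ell^2$-contraction step rather than asserting the claimed form.
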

We use this result directly but also to prove the following key propositions whose aim is to replace Propositions 
\ref{prop: crittv} and \ref{prop: critsep}.

\begin{proposition}
\label{prop: hitcutoff}
Let $(\Omega_n,P_n,\pi_n)$ be a sequence of lazy reversible irreducible
finite chains which satisfies the product condition. Assume that for each $n$ there exist sequences of sets and vertices  $I_n,Z_n \subset
\Omega_n$, $a=a(n) \in \Omega_n$ which satisfy
\begin{itemize}
\item[(i)]
$\inf_{n } \pi_n(Z_n) > 0$.
\item[(ii)] $Z_n$ is balanced seen from $I_n$ for all $n$.
\item[(iii)] $\limsup_{n \to \infty} \sup_{t\ge 0}  \max_{i \in I_n} \Pr_i[T_{Z_{n}}>t]-\Pr_a[T_{Z_{n}}>t]\le 0$.
\item[(iv)]  $ \limsup_{n \to \infty} \sup_{t\ge 0} \max_{x
\in \Omega_n \setminus I_n}\|\Pr_x^t-\pi \|_{\TV} - \Pr_a[T_{Z_{n}}>t] \le 0.$ 
\end{itemize}
Let $\tau_{n}(p):=\inf \{ t:\Pr_a[T_{Z_{n}}>t] \le p \}$.
Then
\begin{equation}
\label{eq: hitcutoff01}
 \limsup_{n \to \infty}\frac{\mixn(\eps')}{\tau_n(\eps)} \le 1 \text{ and } \liminf_{n \to \infty}\frac{\mixn(\eps)}{\tau_n(\eps')} \ge 1, \text{ for all }0<\eps<\gep' <1 .
 \end{equation}
In particular, total-variation cutoff occurs if and only if
\begin{equation}
\label{eq: hitcutoff1}
  \lim_{n \to \infty} \frac{\tau_{n}(\epsilon)}{\tau_{n}(1-\epsilon)} =1,
\text{ for every } 0<\epsilon<1.
\end{equation}
\end{proposition}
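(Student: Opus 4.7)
The plan is to prove the two-sided estimate \eqref{eq: hitcutoff01}; the cutoff characterization \eqref{eq: hitcutoff1} then follows via the monotonicity of $\tau_n$. The central tool is the decomposition
$$\Pr_x^t(\cdot) = \sum_{k \le t} \Pr_x[T_{Z_n}=k]\, \Pr_{\pi_{Z_n}}^{t-k}(\cdot) + \Pr_x[X_t=\cdot,\, T_{Z_n}>t],$$
valid whenever $Z_n$ is balanced seen from $x$, combined with the standard $\ell^2$ estimate
$$\|\Pr_{\pi_{Z_n}}^s - \pi_n\|_{\TV} \le \tfrac{1}{2}\sqrt{(1-\pi_n(Z_n))/\pi_n(Z_n)}\, (\lambda_2^{(n)})^s.$$
Under hypothesis (i) and the product condition the right-hand side is $o(1)$ uniformly as long as $s_n/t_{\mathrm{rel}}^{(n)} \to \infty$, and $\tau_n(\gep) \to \infty$ for every $\gep \in (0,1)$ since $\mixn \to \infty$.

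For the upper bound $\limsup_n \mixn(\gep')/\tau_n(\gep) \le 1$, I would fix any sequence $s_n$ with $t_{\mathrm{rel}}^{(n)} \ll s_n \ll \tau_n(\gep)$. For $x \in I_n$, the decomposition and the triangle inequality give
$$\|\Pr_x^t - \pi_n\|_{\TV} \le \Pr_x[T_{Z_n} > t] + \sum_k \Pr_x[T_{Z_n}=k]\, \|\Pr_{\pi_{Z_n}}^{t-k} - \pi_n\|_{\TV},$$
and splitting the second sum at $t-k = s_n$ bounds it above by $o(1) + \Pr_x[T_{Z_n} > t - s_n]$. Evaluating at $t = \tau_n(\gep) + s_n$, hypothesis (iii) and the definition of $\tau_n$ yield $\|\Pr_x^t - \pi_n\|_{\TV} \le \gep + o(1)$. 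For $x \notin I_n$, hypothesis (iv) supplies the same bound directly. Taking the maximum over $x$ and using $s_n = o(\tau_n(\gep))$ delivers the upper bound.

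For the lower bound $\liminf_n \mixn(\gep)/\tau_n(\gep') \ge 1$, the same decomposition applied with $x = a$ together with the reverse triangle inequality yields
$$d_n(t) \ge \pi_n(Z_n)\, \Pr_a[T_{Z_n} > t] - o(1) - \Pr_a[t - s_n < T_{Z_n} \le t],$$
using $\|\gs_t' - \pi_n\|_{\TV} \ge \pi_n(Z_n)$, where $\gs_t'$ denotes the conditional law of $X_t$ on $\{T_{Z_n} > t\}$ (which is supported on $\Omega_n \setminus Z_n$). The window term is $o(1)$ by Remark \ref{rem: distance}: laziness together with the local CLT argument there force $\max_k \Pr_a[T_{Z_n}=k] = o(s_n^{-1})$ once $\mathrm{dist}(a, Z_n) \to \infty$, and the latter is guaranteed by $\tau_n(\gep) \to \infty$. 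This gives $\mixn(\gep) \ge (1-o(1))\, \tau_n(\gep/\pi_n(Z_n))$; combining with the upper bound already established and exploiting the monotonicity of $\tau_n$ and $\mixn$ (i.e.~that the target statement is stable under a positive constant shift in $\gep$) then promotes the bound to the full claimed range of pairs $\gep < \gep'$.

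The main obstacle is the lower bound, specifically absorbing the window probability $\Pr_a[t - s_n < T_{Z_n} \le t]$; without some control on the mode of the hitting-time distribution the reverse-triangle estimate is useless. The laziness input via the local CLT bound in Remark \ref{rem: distance} is exactly what is needed, and once it is in hand the rest of the proof is bookkeeping built from the decomposition, the $\ell^2$ bound, and hypotheses (iii)--(iv).
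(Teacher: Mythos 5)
Your upper bound is sound and is essentially the paper's: the balanced-set decomposition plus the $\ell^2$ contraction estimate, split at a scale $s_n$ with $\reln \ll s_n \ll \tau_n(\gep)$, handles $x\in I_n$, and hypothesis (iv) handles the rest (this is the second half of Lemma \ref{cor: contractioncor}). The genuine gap is in the lower bound. Your reverse-triangle argument only gives $d_n(t)\ge \pi_n(Z_n)\Pr_a[T_{Z_n}>t]-o(1)-\Pr_a[t-s_n<T_{Z_n}\le t]$, i.e.\ at best $\mixn(\gep)\ge(1-o(1))\tau_n(\gep/\pi_n(Z_n))$, which is vacuous as soon as $\gep\ge\pi_n(Z_n)$ and in any case strictly weaker than the claim, which is needed for \emph{all} pairs $0<\gep<\gep'<1$ (e.g.\ to deduce that an abrupt transition of $\tau_n$ implies TV cutoff one must lower-bound $\mixn(1-\gep)$, and with $\pi_n(Z_n)$ of order, say, $1/4$ your estimate says nothing there). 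The proposed ``promotion'' via monotonicity and the already-proved upper bound cannot close this: $\mixn(\cdot)$ is decreasing in $\gep$, so a lower bound on $\mixn$ at small $\gep$ gives no information at larger $\gep$, and the upper bound is an inequality in the wrong direction; no combination of the two partial facts recovers the full statement. The paper's lower bound uses a different mechanism that avoids the $\pi_n(Z_n)$ loss entirely: couple $\Pr_a^{k-s_\gep}$ with the stationary chain and invoke the hitting-time-from-stationarity estimate \eqref{eq: hittingfrompi}, $\Pr_{\pi}[T_{Z_n}>s_\gep]\le \exp(-s_\gep\,\pi_n(Z_n)/\reln)\le\gep$ (this is exactly where the product condition and (i) enter), to get $\Pr_a[T_{Z_n}\ge k]\le d_a(k-s_\gep)+\gep$ directly, with no anti-concentration input. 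The heuristic reason your static inequality cannot suffice: conditioned on $\{T_{Z_n}>t\}$ the law may be as close to $\pi_n$ as $\pi_n$ restricted to $Z_n^c$ allows, so the only contradiction with near-stationarity comes from running the chain a further $O(\reln)$ steps and seeing $Z_n$ get hit — a time-shifted argument, not a single-time one.

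Two further problems in the same part. First, your treatment of the window term relies on the mode bound of Remark \ref{rem: distance}, which only gives $\max_k\Pr_a[T_{Z_n}=k]\le C/\sqrt{\mathrm{dist}(a,Z_n)}$; to make $\Pr_a[t-s_n<T_{Z_n}\le t]=o(1)$ you need $\mathrm{dist}(a,Z_n)\gg s_n^2$ with $s_n\gg\reln$, and neither this rate nor even $\mathrm{dist}(a,Z_n)\to\infty$ is implied by the hypotheses ($\tau_n(\gep)\to\infty$ does not force the distance to diverge: the hitting time can be large because transition probabilities toward $Z_n$ are small, with $a$ at bounded distance, and $\reln$ need not be bounded since only the product condition, not \eqref{star}, is assumed). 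Second, applying the balanced-set decomposition at $x=a$ requires $Z_n$ to be balanced seen from $a$, which is not among the hypotheses (it holds in the paper's applications, but the paper's lower-bound argument needs no balancedness at all). In short: the statement's lower bound requires the coupling-with-stationarity argument of Lemma \ref{cor: contractioncor} (or an equivalent), and your route does not reach it.
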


\begin{proposition}
\label{cor: sepcutoffcriterion}
Let $(\Omega_n,P_n,\pi_n)$ be a sequence of lazy reversible irreducible
finite Markov chains which satisfies \eqref{star}. Assume that there exist sequences of sets and vertices,  $A_n,B_n,Z_n \subset
\Omega_n$, $a_n\in A_n$, $b_n \in B_n$, which satisfy
\begin{itemize}
\item[(i)]
$\inf_n \pi_n(Z_n) >0$.
\item[(ii)] $\lim_{n\to \infty} \max_k \Pr_{a_n}[T_{Z_n}=k] =0$.
\item[(iii)] $Z_{n}$ is balanced seen from $I_n:= A_n \cup B_n $.
\item[(iv)] $\Pr_x[T_{Z_n}<T_{B_n}]=1$ for all $x \in A_n$.
 \item[(v)] 
 \begin{equation}\begin{split}
\limsup_{n\to \infty} \sup_{t\ge 0} \max_{y \in B_{n}} \Pr_{y}[T_{Z_{n}}>t]- \Pr_{b_n}[T_{Z_{n}}>t]&= 0, \\
\limsup_{n\to \infty} \sup_{t\ge 0} \max_{y \in I_n} \Pr_{y}[T_{Z_{n}}>t]- \Pr_{a_n}[T_{Z_{n}}>t]&=0.
\end{split}
\end{equation}
\item[(vi)]
$$
\liminf_{n\to \infty} \inf_{t\ge 0} \min_{(x,y) \in A_n^2 \cup ( \gO_{n}^2\setminus I_n^2 )}\frac{P_n^t(x,y)}{\pi_n(y)}- \frac{P_n^t(a_n,b_n)}{\pi_n(b_n)} \ge 0.
$$
\end{itemize}
Then\begin{equation}
\label{eq: sepcutoff0}
\lim_{n\to \infty} \sup_{t\ge 0} |\dsepn(t)-\mathbb{P} [T_{Z_{n}}^{a_n,b_n}>t]|=0. 
 \end{equation}
 In particular, there is separation cutoff if and only if $T_{Z_{n}}^{a_n,b_n} $ is concentrated around its median.
\end{proposition}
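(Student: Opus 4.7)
My plan is to mirror the proof of Proposition \ref{prop: critsep}, replacing the single point $z_n$ by the set $Z_n$ and the elementary Lemma \ref{lem: pathsdecomposition} by its ``balanced set'' generalization, Lemma \ref{lem: pathsdecompositions}. The goal is to show, uniformly in $t$, that
\begin{equation*}
\min_{x,y \in \Omega_n} \frac{P_n^t(x,y)}{\pi_n(y)} = \mathbb{P}[T_{Z_n}^{a_n,b_n} \le t] + o(1),
\end{equation*}
with the minimum attained (up to $o(1)$) at $(x,y)=(a_n,b_n)$. First, I would apply Lemma \ref{lem: pathsdecompositions}(ii) to the pair $(a_n,b_n)$: hypothesis (iv) guarantees that every path from $a_n$ to $b_n$ passes through $Z_n$, and (iii) ensures $Z_n$ is balanced from both points. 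This gives
\begin{equation*}
\mathbb{P}[T_{Z_n}^{a_n,b_n} \le t] \le \frac{P_n^t(a_n,b_n)}{\pi_n(b_n)} \le \mathbb{P}[T_{Z_n}^{a_n,b_n} \le t] + \tfrac{1}{2}\, t_{\mathrm{rel}}^{(n)} \max_k \mathbb{P}[T_{Z_n}^{a_n,b_n}=k]\sqrt{\tfrac{1-\pi_n(Z_n)}{\pi_n(Z_n)}}.
\end{equation*}
The correction term is $o(1)$ uniformly in $t$: by \eqref{star}, $t_{\mathrm{rel}}^{(n)}$ is bounded; by (i), the factor involving $\pi_n(Z_n)$ is bounded; and using $\max_k \mathbb{P}[T_{Z_n}^{a_n,b_n}=k] \le \max_k \mathbb{P}_{a_n}[T_{Z_n}=k]$ (mode of a convolution is bounded by the mode of either factor), condition (ii) kills the remaining factor.

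Next, I would show that every other pair $(x,y)$ satisfies $P_n^t(x,y)/\pi_n(y) \ge P_n^t(a_n,b_n)/\pi_n(b_n) - o(1)$ uniformly in $t$. Condition (vi) handles $A_n^2 \cup (\Omega_n^2 \setminus I_n^2)$ directly. For the remaining pairs $(x,y) \in (A_n \cup B_n)^2 \setminus A_n^2$, hypothesis (iii) lets me apply Lemma \ref{lem: pathsdecompositions}(i), yielding the lower bound $P_n^t(x,y)/\pi_n(y) \ge \mathbb{P}[T_{Z_n}^{x,y} \le t]$. I would then read condition (v) as the statement that there exists $\eps_n \to 0$ such that, uniformly in $s$, $\Pr_y[T_{Z_n}>s] \le \Pr_{b_n}[T_{Z_n}>s]+\eps_n$ for every $y \in B_n$, and $\Pr_x[T_{Z_n}>s] \le \Pr_{a_n}[T_{Z_n}>s]+\eps_n$ for every $x \in I_n$. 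Convolving these tail estimates through the definition of $T_{Z_n}^{x,y}$ gives $\mathbb{P}[T_{Z_n}^{x,y}>t] \le \mathbb{P}[T_{Z_n}^{a_n,b_n}>t] + 2\eps_n$ uniformly in $t$ when $x \in I_n$, $y \in B_n$ (and symmetrically when $x \in B_n$, $y \in I_n$). For $(x,y) \in B_n^2$ I first compare with $T_{Z_n}^{b_n,b_n}$ using the first line of (v) twice, and then use the second line of (v) with $b_n \in I_n$ to upgrade to the required comparison with $T_{Z_n}^{a_n,b_n}$.

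Combining the two steps gives $\dsepn(t) = 1-\min_{x,y} P_n^t(x,y)/\pi_n(y) = \mathbb{P}[T_{Z_n}^{a_n,b_n}>t] + o(1)$ uniformly in $t$, which is \eqref{eq: sepcutoff0}. The main difficulty is not conceptual but a matter of bookkeeping: propagating the uniform-in-$t$ $o(1)$ error estimates from (v) through the convolution defining $T_{Z_n}^{x,y}$. This rests on the elementary observation that if $\Pr[U>s] \le \Pr[U'>s] + \eps$ uniformly in $s$, then for any independent random variable $\sigma \ge 0$ one has $\mathbb{P}[\sigma+U>t] \le \mathbb{P}[\sigma+U'>t] + \eps$ uniformly in $t$; applied twice, this keeps the error of order $\eps_n$, which is the only step where the ``uniform'' strength of the hypotheses, rather than mere stochastic domination, is crucial.
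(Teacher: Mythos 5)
Your proposal is correct and follows essentially the same route as the paper: a two-sided estimate at $(a_n,b_n)$ via Lemma \ref{lem: pathsdecompositions}(ii), with the error term killed by \eqref{star}, (i) and (ii) (via the mode-of-convolution bound), the lower bound of Lemma \ref{lem: pathsdecompositions}(i) combined with the uniform tail comparisons of (v) for the remaining pairs in $I_n^2\setminus A_n^2$, and (vi) for everything else. Your separate treatment of $B_n^2$ is redundant (it is already covered by the case $x\in I_n$, $y\in B_n$), but harmless.
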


\begin{remark} Note that the results presented above are generalizations of those presented in Section \ref{sec: Pre}.
Hence we shall only prove the more general versions in the Appendix. 
\end{remark}
\begin{remark} Similarly to Remark \ref{rem: distance}, condition (ii) of Proposition \ref{cor: sepcutoffcriterion} is satisfied in Examples \ref{ex: 4} and \ref{ex: 5} due to laziness and the fact that $\lim_{n \to \infty} \min \{t:P_n^t(a_n,Z_n)>0 \}=\infty$.
\end{remark}
\begin{lemma}
\label{lem: CS}
For any reversible
Markov chain, $(\Omega,P,\pi)$ and any $x,y \in \Omega$ and $s,t \ge 0$,
\begin{equation}
\label{mixingbndonsep}
P^{s+t}(x,y)/\pi(y) \ge (1-\|\Pr_x^t -\Pr_y^s  \|_{\mathrm{TV}})^{2}.
 \end{equation}
 In particular, if $d_x(t)+d_y(s)\le 1 $, then
 \begin{equation}
\label{mixingbndonsep2}
P^{s+t}(x,y)/\pi(y)  \ge (1-d_x(t)-d_y(s))^{2}.
 \end{equation}
 \end{lemma}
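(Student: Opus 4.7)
The plan is to use reversibility to symmetrize $P^{s+t}(x,y)/\pi(y)$ and then apply Cauchy–Schwarz. By reversibility, $\pi(y)P^s(z,y)=\pi(z)P^s(y,z)$, hence the Chapman–Kolmogorov expansion gives
\begin{equation*}
\frac{P^{s+t}(x,y)}{\pi(y)} \;=\; \sum_{z\in\Omega} \frac{P^t(x,z)P^s(z,y)}{\pi(y)} \;=\; \sum_{z\in\Omega}\frac{P^t(x,z)\,P^s(y,z)}{\pi(z)}.
\end{equation*}
This symmetric form is the only place reversibility is used, and it is also the crucial step; everything else is elementary.

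Next I would apply Cauchy–Schwarz with the splitting
$\sqrt{P^t(x,z)P^s(y,z)} = \sqrt{\pi(z)}\cdot\sqrt{P^t(x,z)P^s(y,z)/\pi(z)}$, which yields
\begin{equation*}
\Bigl(\sum_{z}\sqrt{P^t(x,z)\,P^s(y,z)}\Bigr)^{2} \;\le\; \Bigl(\sum_{z}\pi(z)\Bigr)\cdot\sum_{z}\frac{P^t(x,z)P^s(y,z)}{\pi(z)} \;=\; \frac{P^{s+t}(x,y)}{\pi(y)}.
\end{equation*}

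Then I would bound the left-hand side from below using the elementary inequality $\sqrt{ab}\ge \min(a,b)$ valid for $a,b\ge 0$, which gives
\begin{equation*}
\sum_{z}\sqrt{P^t(x,z)P^s(y,z)} \;\ge\; \sum_{z}\min\bigl(P^t(x,z),P^s(y,z)\bigr) \;=\; 1-\|\Pr_x^{t}-\Pr_y^{s}\|_{\mathrm{TV}},
\end{equation*}
by the last expression for total-variation distance in \eqref{def:TV}. Combining these two inequalities proves \eqref{mixingbndonsep}. I expect no real obstacle; the only subtle point is keeping signs straight so as to be able to square the lower bound.

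Finally, for \eqref{mixingbndonsep2} I would simply apply the triangle inequality for total variation,
\begin{equation*}
\|\Pr_x^{t}-\Pr_y^{s}\|_{\mathrm{TV}} \;\le\; \|\Pr_x^{t}-\pi\|_{\mathrm{TV}} + \|\pi-\Pr_y^{s}\|_{\mathrm{TV}} \;=\; d_x(t)+d_y(s),
\end{equation*}
and use the hypothesis $d_x(t)+d_y(s)\le 1$ to ensure that $1-\|\Pr_x^{t}-\Pr_y^{s}\|_{\mathrm{TV}}\ge 1-d_x(t)-d_y(s)\ge 0$, so that squaring preserves the inequality and yields \eqref{mixingbndonsep2}.
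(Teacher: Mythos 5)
Your proposal is correct and follows essentially the same route as the paper's proof: the paper writes the symmetrized sum as $\sum_z \pi(z) f^2(z)$ with $f(z)=\sqrt{P^t(x,z)P^s(y,z)}/\pi(z)$ and applies Jensen's inequality, which is exactly your Cauchy--Schwarz step, followed by the same $\sqrt{ab}\ge\min(a,b)$ bound and the triangle inequality for \eqref{mixingbndonsep2}. The only slip is the stated detailed-balance identity, which should read $\pi(z)P^s(z,y)=\pi(y)P^s(y,z)$ (equivalently $P^s(z,y)/\pi(y)=P^s(y,z)/\pi(z)$); this is the relation your displayed computation actually uses, so the argument itself is unaffected.
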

 \begin{proof}
Let $f(z):= \sqrt{P^{t}(x,z)P^{s}(y,z)}/\pi(z)$. By reversibility and Jensen's inequality,

\begin{equation*}
\begin{split}
& \frac{P^{s+t}(x,y)}{\pi(y)}=\sum_{z}\pi(z) \frac{P^{t}(x,z)P^{s}(z,y)}{\pi(z) \pi(y)} 
= \sum_z \pi(z) f^2(z) \ge \left( \sum_z \pi(z) f(z)  \right)^2 \\ & \ge \left( \sum_z \min ( P^{t}(x,z) , P^{s}(y,z))
\right)^2=(1-\|\Pr_x^t -\Pr_y^s  \|_{\mathrm{TV}})^{2}.
\end{split}
\end{equation*}
\eqref{mixingbndonsep2} follows from \eqref{mixingbndonsep} by the triangle
inequality.
\end{proof}

\subsection{Building blocks of our constructions}

Let us now describe the building blocks of our constructions. We assume for simplicity that $n$ is an even integer.
To produce the analog of a biased nearest-neighbor random walk, our constructions must include structures which look like regular trees (for which the SRW has a bias towards the leaves). We must also care about adding some extra connections to avoid producing dead-ends on the leaves (which could lead to a small Cheeger constant). Finally, we must introduce extra symmetries to ensure that the center of mass is balanced seen from all vertices which are sufficiently far from it. Finally, we ``stretch" the edges which are far away from the center of mass (that is, replace each such edge by a path of length $L$, for some fixed large constant $L$), to ensure that the worst-case total-variation and separation distances are obtained by vertices which are far away from the center of mass (which is balanced, seen from those vertices).

\medskip

\emph{Step 1:} Let $\T_a=(V_{a},E_{a})$ be a binary tree of depth $n$ rooted at $a$ (in the rest of the construction, we keep calling $a$ the {\em \textbf{root}}, even though the graph will no longer be a tree).
Replace each edge between a pair of vertices belonging to the first $n/2$ generations of  $\T_a$ by a path of $L$ edges, where $L$ is an integer which does not depend on $n$. As $L$ shall remain fixed we omit the dependence in $L$ from our notation. In the course of the proof 
we will have to require $L$ to be sufficiently large for the purpose of applying a certain crude $\ell_2$
estimate. 
 We call the obtained graph $H^1_n$. It is a tree rooted at $a$ and we denote its set of leaves by 
$$\cL_n:= (u^1,\ldots,u^{2^n}),$$  ($\cL_n$ stands for the $n$-th generation of $\cT_a$), where the labels are chosen in an arbitrary fashion.

\medskip

On $H^1_n$ the walker starting from $a$ will have a bias towards the set of leaves, which can be considered as the center of mass of these graph, since it contains a positive proportion of the vertices. 
The parameter $L$ here is present only to make the walk slower (the  expected number of steps to cross an $L$-path is $2L^2$, i.e. 
 if $v\in H^1_N$ is either the root $a$ or a vertex of degree $3$ adjacent to three degree $2$ vertices
$\mathbb{E}_v[\inf \{t\ : \ D(X_t,v)=L \}]=2L^2 $ where $D$ denotes the graph distance). 
This shall assist us in verifying that the worst-case total-variation and separation distances are obtained by vertices which are far away from the center of mass. 

The problem of this construction is that seen from a vertex which is not $a$ the set of leaves is not balanced. To cope with this defect, we add $n$ extra "generations" of vertices, which make the center of mass balanced from "many" starting points.

\medskip

\emph{Step 2:} 
For all $1 \le m \le n$ we label the vertices of the "$n+m$-th generation" (they are at distance $(L+1)n/2+m$ from $a$) as follows 
 $$\cL_{n+m}:=\{u_{i_1,\ldots,i_m}^k:i_1,\ldots,i_m \in [4],k \in [2^{n-m}] \}$$ 
 and we connect them to generation $n+m-1$ using the following scheme: for all $k \in [2^{n-m}]$
  $u_{i_1,\ldots,i_{m-1},1}^k,u_{i_1,\ldots,i_{m-1},2}^k,u_{i_1,\ldots,i_{m-1},3}^k,u_{i_1,\ldots,i_{m-1},4}^k $
 are connected to $u_{i_1,\ldots,i_{m-1}}^{2k-1} $ and  $u_{i_1,\ldots,i_{m-1}}^{2k}$. We call the obtained graph $H_1^n$.
 The "center of mass" of $H^2_n$ is the set $\cL_{2n}$ (it bears roughly half of the total mass of $H_n^2$), which is balanced seen from 
 any vertex in $H^1_n$.
 
 \medskip
 
 \emph{ Step 3.1 and 3.2:}
We now want to plug (attach) to the leaf set of $H^1_n$  ``two paths'' with different speeds (to have something similar to the structures present in 
Examples \ref{ex: 2} and \ref{ex: 3}).
The construction is the following (see Figure \ref{fig:tree}): 
\begin{itemize}
 \item [(i)] We start with a rooted binary tree $T$ of depth $n$ (assume $n\ge 4$). And let us call $1$ and $2$ the two neighbors of the root and $T_1$ and $T_2$ 
 the subtrees rooted at $1$ and $2$, respectively.
 \item [(ii)] In $T_1$ we add edges between any pair of vertices which have a common ancestor and are not leaves.
 \item [(iii)] Finally we assign labels to the leaf sets of $T_1$ and $T_2$ in a way that the two labeled trees (prior to step (ii) that is) are isomorphic  (see e.g. Figure \ref{fig:tree})
 and we merge each leaf of $T_1$ to the leaf of $T_2$ with the same label.
 We let $\cT_n$ denote the obtained graph.
 \item [(iv)] We let $\cT'_n$ denote the graph which is obtained by the same construction, in which we also add edges within $T_2$ in step $(ii)$ using the same role as for $T_1$ (see Figure \ref{fig:tree}).
\end{itemize}
To each vertex $v\in \cL_{2n}$, we glue a copy of $\cT_n$  ($v$ is merged with the root of $\cT_{n}$ and we obtain $H^{3,1}_n$).
If we glue a copy of $\cT'_n$ (to each $v\in \cL_{2n}$,) instead of $\cT_n$, we obtain $H^{3,2}_n$.
For both graphs we call $\cL_{3n}$ the set of vertices at distance $(L+5)n/2$ (i.e. maximal distance) from $a$.

\begin{figure}[h]
\begin{center}
\leavevmode
\epsfxsize =15 cm
\psfragscanon
\psfrag{Root}{\tiny Root}
\epsfbox{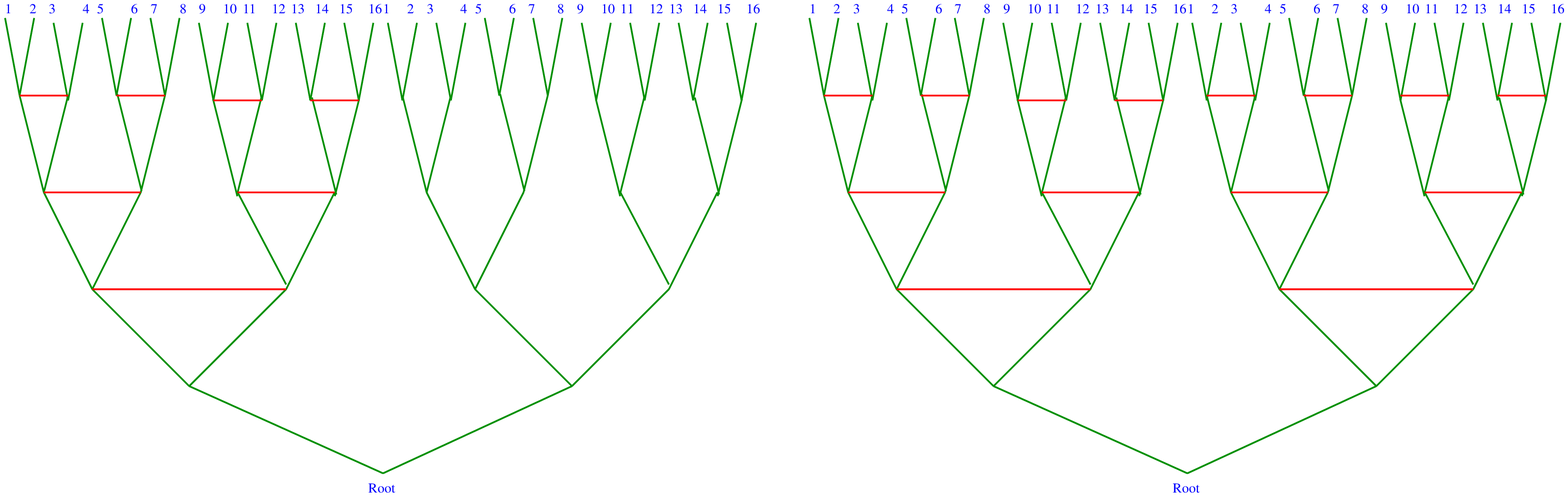}
\end{center}
\caption{\label{fig:tree} 
Representations $\cT_n$ (on the left) and $\cT'_n$ (on the right) for $n=4$. The red edges are those added in step $(ii)$. 
On step $(iv)$ leafs with the same label are merged.
}
\end{figure}

\medskip

Finally we want to link together all the vertices of $\cL_{3n}$ in order to avoid dead-ends in the graph.
We choose to link them together using an explicit expander (see e.g.~\cite{cf:Ajtai, cf:Reingold} for examples of explicit construction of expanders) 
so that (total-variation) mixing occurs rapidly once $\cL_{3n}$ is reached.

\emph{ Step 4:} 
We let $F_n=(V_n,E_n)$ be a family of explicit $3$-regular $c$-lazy expanders  with $V_n= [2^{3n-1}]$.
We glue together $G_n$ and $H^{3,i}_n$ ($i=1,2$) without adding vertices by identifying $V_n$ with $\cL_{3n-1}$.
More precisely, we start with a copy of $H_{n}^{3,i}$ with root $a$. We label the vertices of $\cL_{3n}$  by $z_1,\dots,z_{2^{3n-1}}$ (the labeling is arbitrary).   
We then connect $z_i$ with $z_j$ if and only if $\{i,j\}\in E_n$.
We call the final result of our construction $H^{4,i}_n$ ($i=1,2$). We call $a$ the root of $H_n^{4,i}$ ($i=1,2$).

\medskip

With some efforts and using the tools developed in the following sections,
the reader can check that the lazy SRW on $H^{4,1}_n$ exhibits pre-cutoff but not cutoff in total-variation. 
This is a SRW version of Aldous' counter-example.

\subsection{A sequence of Lazy SRW on bounded degree expanders with total-variation cutoff and no separation cutoff}
The following is a modification of Example \ref{ex: 2} into a sequence of lazy SRWs on a sequence of bounded degree graphs.
\begin{example}
\label{ex: 4}
Take a copy of $H_n^{3,1}$ with root $b$ and a copy of $H_n^{3,2}$ with root $a$. We glue together the two by
merging the vertices of $\cL_{3n}$ (of both graphs): we give labels $z_1,\dots,z_{2^{3n-1}}$ to the vertices lying in $\cL_{3n}$ of each of the two graphs, 
and then merge each pair of vertices who share the same label. Finally, we build extra-connections between $z_1,\dots,z_{2^{3n-1}}$
using an expander graph $F_n$ with $2^{3n-1}$ vertices, like in Step 4. We let $G^1_n:=(V_{n}^{1},E_n^1)$ denote the obtained graph.

\end{example}

\begin{figure}[h]
\begin{center}
\leavevmode
\epsfxsize =14 cm
\psfragscanon
\psfrag{Stretched}{\tiny Strectched edges}
\psfrag{hatG}{\tiny $\hat G^1_n$}
\psfrag{a}{\small $a$ }
\psfrag{b}{\small $b$ }
\psfrag{H0=An}{\tiny $H^1=A$}
\psfrag{H0=Bn}{\tiny $H^1=B$}
\psfrag{H1}{\tiny $H^2$}
\psfrag{H32}{\tiny $H^{3,2}$}
\psfrag{H31}{\tiny $H^{3,1}$}
\psfrag{Z}{\tiny $Z$}
\epsfbox{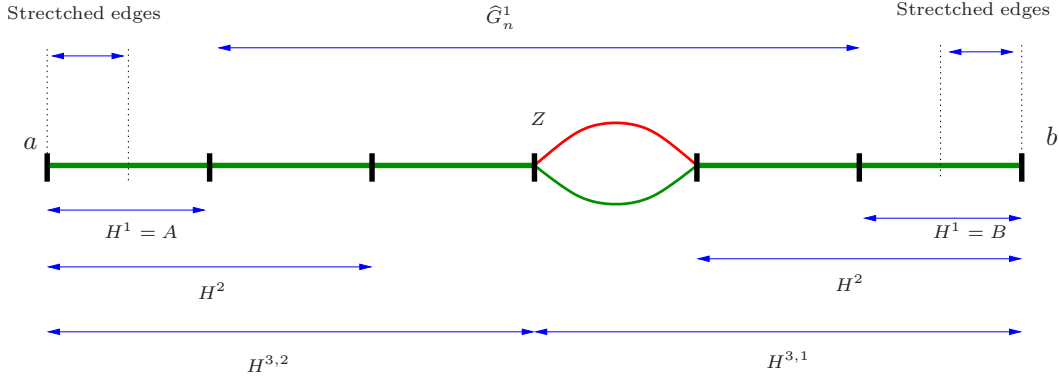}
\end{center}
\caption{\label{schematic2} 
Schematic representation of Example \ref{ex: 4}. In the construction of $H^{3,1}_n$, the asymmetry of $\cT_n$ produces two different paths to reach
the center of mass $Z_n$, with different speed. This produce an absence of concentration for the hitting time of $Z$ starting from $b$.
}
\end{figure}

In order to apply Propositions \ref{prop: hitcutoff} and \ref{cor: sepcutoffcriterion}, we need to identify which vertices and sets will play which role.
\begin{itemize}
\item The center of mass $Z_n$ is given by the $2^{3n-1}$ vertices which are linked by the expander.
\item  $a$ is the vertex which
maximizes (stochastically) the hitting time of $Z_n$.

\item The pair of vertices $(x,y)$ which (up to negligible terms) attains the minimum for $P_{n}^t(x,y)/\pi_{n}(y) $ (for all $t \ge 0$) is given by $(a,b)$.
\item The sets $A_n$ and $B_n$ are chosen to be the largest set of points around $a$ and $b$ (resp.) such that $Z_n$ is balanced seen from  $I_n:=A_n\cup B_n$.
Namely, these are the vertices within respective distance $(L+1)n/2$ from $a$ and $b$ (the vertices of $H^0_n$ in both $H_n^{3,1}$ and $H_n^{3,2}$). 
Indeed, due to step $2$ of the construction, the set $\cL_{2n}$ of $H_n^{3,1}$, respectively,   
$H_n^{3,2}$ (i.e.~the collection of vertices whose distance from $a$ (resp.~$b$) is  $(L+3)n/2$) is balanced seen from $A_n$, resp.~$B_n$. 
This implies that the distribution of $X_{T_{Z_n}}$ is uniform on $Z_n$. Step $(iv)$ of the construction of $\cT_n$ is there to guaranty that 
$T_{Z_n}$ and $X_{T_{Z_n}}$ are independent (and hence that $Z_n$ is balanced seen from $A_n$ and $B_n$).
\end{itemize}
It is then not difficult to check (cf.~Figure \ref{schematic2}) from the construction that assumptions $(i)-(iii)$ resp. $(i)-(v)$ of Propositions 
\ref{prop: hitcutoff} and \ref{cor: sepcutoffcriterion}, are satisfied. 

\medskip

Moreover, the hitting time of $Z_n$ from $a$ is concentrated around $(17+3L^2)n$, while from $b$
it satisfies that 

\begin{equation}\label{hittingZ}
\lim_{n\to \infty} \Pr_b\left [T_{Z_n}\ge s n \right]=\begin{cases} 1 \text{ if } s <15+3L^2,\\
1/2 \text{ if } s\in (15+3L^2, 17+3 L^2),\\
0 \text{  if } s>17+3L^2.
\end{cases}
\end{equation}
We want to prove that the system displays cutoff in total-variation around time $(17+3L^2)n$, and that the asymptotic behavior for the separation distance is given by
\begin{equation}
 \lim_{n\to \infty} \dsepn(sn)=\lim_{n\to \infty} \Pr_b\left [T_{Z_n}\ge (s-17+3L^{2}) n \right]=\begin{cases} 1 \text{ if } s< 32+6L^2,\\
                        1/2 \text{ if }       (32+6L^2,34+ 6L^2),\\
                        0  \text{ if } s> 34+ 6L^2.
                              \end{cases}
\end{equation}
The only thing we have to do to prove these statements is to verify condition $(iv)$ in Proposition \ref{prop: hitcutoff} and condition $(vi)$ of Proposition 
\ref{cor: sepcutoffcriterion} (resp.). The only delicate point is to show that for 
starting points outside of $I_n$ the walk mixes rapidly. I.e. that there exists an absolute constant $C>0$, which does not depend on $L$, such that
\begin{equation}\label{fastmix}
 \lim_{n\to \infty} \max_{v\notin I_n} d_{v}^{(n)}(\lceil C n \rceil)= 0.
\end{equation}

Before proving \eqref{fastmix} let us explain how we use it to verify the remaining conditions.
Note that if $L$ is chosen to be sufficiently large (i.e.~such that $(17+3L^2)>C$) then \eqref{fastmix} implies condition $(iv)$ of Proposition  \ref{prop: hitcutoff}.

\medskip

For condition $(vi)$ of Proposition \ref{cor: sepcutoffcriterion}, for the case $x\in \gO_n$ $y\notin I_n$, we use Lemma \ref{lem: CS}
and the total-variation cutoff result to show that for $t\ge  (18+3L^2+C)$
\begin{equation}
P_{n}^t(x,y)/\pi_n(y)   \ge  1- 2 \left( d^{(n)}_x(18+3L^2)+ d^{(n)}_y(Cn) \right) ,
\end{equation}
which is uniformly close to one. 

This yields the right condition provided $32+6L^2 > 18+3L^2+C$ (which can obviously be fulfilled by picking $L$ to be sufficiently large).
We now treat the case where both $x$ and $y$ lie in $A_n$ (whose analysis does not rely on \eqref{fastmix}). We use Lemma \ref{lem: pathsdecompositions} with $Z=Z'_n$ chosen to be the set of vertices within distance 
$(L+3)/2n$ from $a$ (corresponding to $\cL_{2n}$ in the copy of $H_n^{3,2}$). Recall that by construction this set is balanced seen from $A_n$.
By \eqref{eq: septhroughz11} we have that 
\begin{equation}
 P_{n}^t(x,y)/\pi_n(y) \ge \bbP\left[ T^{x,y}_{Z'_n} \le t\right].
\end{equation}
Moreover,  for any $\gep>0$
\begin{equation}
 \lim_{n\to \infty}\max_{x,y\in A_n}\bbP\left[ T^{x,y}_{Z'_n} \le (6L^2+18+\gep)n\right]=1
 \end{equation} 
 and this suffices to conclude that condition $(vi)$ of Proposition 
\ref{cor: sepcutoffcriterion} indeed holds.
 
 Now let us prove \eqref{fastmix}.
 We want to use a simple $\ell_2$ bound using the Poincar\'e inequality (see Lemma \ref{lem: contraction}).
 The issue is that the spectral gap of our graph is rather small (of order $L^{-2}$) due to the presence of stretched edges.
However starting outside of $I_n$ the walk has a very small chance to visit the part of the graph where the edges are stretched, 
 before the walk is already extremely mixed.
 Hence our idea is to apply the $\ell_2$ bound for the walk on a smaller graph which corresponds to the vertices which are likely to be visited.
 This graph will have no stretched edges and a spectral gap which is bounded away from zero and does not depend on $L$.
 
 \medskip
 
 We let $\hat G^1_n=(\hat V_n,\hat E_n)$ denote the graph which is obtained from $G^1_n$ 
when all the vertices within distance $Ln/2+1$ from $a$ and $b$ have been deleted, together with all edges connected to them.
First we observe that the Cheeger constant associated to $\hat G^1_n$ is large (i.e.~it is bounded from below by some positive absolute constant, 
which is independent also of $L$), see e.g.\
Lemma 2.1 in \cite{cf:LS} for a proof.
 
 \begin{proposition}
\label{prop: controllingcloseverticestoZn}
Let $\kappa:=\left(\min (c/3,1/18)\right)^2/2 $. Then 
\begin{equation}
\label{eq: kappa}
\mathrm{ch}_{\mathrm{Lazy}}(\hat G^1_n) \ge \sqrt{ 2\kappa } .
\end{equation}
Consequently, the relaxation-time of the lazy SRW on $\hat G^1_n$, $\hat \rel^{(n)}$, satisfies
\begin{equation}
\hat \rel^{(n)}\le \kappa^{-1}
\end{equation}
\end{proposition}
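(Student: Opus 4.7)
The plan is to note that the second assertion is an immediate consequence of the first together with Theorem~\ref{thm: Cheeger}. Writing $\Phi_n := \mathrm{ch}_{\mathrm{Lazy}}(\hat G^1_n)$, the first assertion gives $\Phi_n^2 \ge 2\kappa$, and the Cheeger inequality yields $1 - \lambda_2(\hat G^1_n) \ge \Phi_n^2/2 \ge \kappa$, hence $\hat \rel^{(n)} \le \kappa^{-1}$. All the content therefore lies in proving $\Phi_n \ge \min(c/3, 1/18)$.

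To do so, I would first describe $\hat G^1_n$ explicitly. Removing all vertices within distance $Ln/2+1$ from $a$ and $b$ destroys exactly the stretched portion of the original construction (the stretching only affected the top $n/2$ generations of the binary trees rooted at $a$ and $b$). The remaining graph is bounded-degree (maximal degree $7$) and consists of a central $3$-regular $c$-expander $F_n$ on the vertex set $Z^\ast_n := \cL_{3n-1}$, together with the surviving bounded-depth attachments coming from the lower half of $H^1$, all of $H^2$, and all of $\cT_n$ on the $b$-side or $\cT'_n$ on the $a$-side. These attachments are balanced binary in structure and their sizes double (up to constants) as one moves toward $Z^\ast_n$; the total degree mass $C_{Z^\ast_n}$ dominates the total degree mass of the rest of $\hat G^1_n$.

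Next I fix an arbitrary $S \subset \hat V_n$ with $\pi(S) \le 1/2$, set $T := S \cap Z^\ast_n$, and split into two cases. \textbf{Case A:} $\min(C_T, C_{Z^\ast_n \setminus T}) \ge \tfrac{1}{3}C_S$. Then the expansion of $F_n$ produces at least $c \cdot \min(|T|, |Z^\ast_n \setminus T|)$ edges in $\partial_E S$ lying inside $F_n$, and since $F_n$ is $3$-regular with $|Z^\ast_n| = \Theta(2^{3n})$ and all degrees are bounded by $7$, a short bookkeeping yields $|\partial_E S|/(2 C_S) \ge c/3$. \textbf{Case B:} one of $C_T, C_{Z^\ast_n \setminus T}$ is below $\tfrac{1}{3}C_S$; replacing $S$ by $S^c$ if necessary (which only changes the Cheeger ratio by a bounded factor), we reduce to the subcase where $S$ lies predominantly in the tree-like attachments. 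The problem then becomes isoperimetric on each attachment $B_v$ rooted at an expander vertex $v \in Z^\ast_n$, and I would show that any subset of $B_v$ of size at most $|B_v|/2$ has sufficiently large edge boundary inside $B_v$ via a layer-by-layer argument exploiting the bounded branching ratio and the sibling/leaf-merging edges added in Steps~$(ii)$ and $(iii)$ of the construction of $\cT_n/\cT'_n$.

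The hard part will be Case~B: precisely controlling the isoperimetric constant of the attachments $B_v$ in the presence of the $H^2$ layer (where the branching pattern differs from a pure binary tree) and the sibling and leaf-merging edges in $\cT_n/\cT'_n$. This verification is essentially the content of Lemma~2.1 of \cite{cf:LS}, which carries out an analogous estimate for a related bounded-degree SRW construction; I would adapt that template here, tracking constants carefully to arrive at the bound $1/18$. Combining Case~A and Case~B then yields $\Phi_n \ge \min(c/3, 1/18) = \sqrt{2\kappa}$, which completes the proof.
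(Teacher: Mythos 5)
Your proposal follows essentially the same route as the paper: the relaxation-time bound is deduced from \eqref{eq: kappa} via Theorem \ref{thm: Cheeger} together with laziness, and the Cheeger-constant estimate itself is reduced to (an adaptation of) Lemma 2.1 of \cite{cf:LS}, which is exactly the justification the paper gives — it provides no self-contained proof of \eqref{eq: kappa}, only that citation. Your Case A/Case B scaffolding is a reasonable sketch of what that lemma's argument looks like, though the precise constants are asserted rather than checked (e.g.\ with maximal degree $7$ in $\hat G^1_n$ and a $3$-regular $c$-lazy expander, your threshold $C_T \ge C_S/3$ most naturally yields a ratio like $c/7$ rather than $c/3$); since the paper likewise inherits the numerical bound from the cited lemma, this is not a gap relative to the paper's own proof.
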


If we let $\hat \Pr_x^t$ and $\hat \pi_n$ refer to the distribution at time $t$ and at  equilibrium for the walk on $\hat G^1_n$,
this implies (by Lemma \ref{lem: contraction}) that for $x\in \hat V^1_n$, for all $t\ge  n \kappa^{-1}\log 9$.
\begin{equation}
\|\hat \Pr_x^t-\hat \pi_n \|_{\TV}\le \frac{1}{\min_{y} \hat \pi_n(y)}e^{-\kappa t} \le \left( \max_{v \in \hat V_n} \deg v \right)|\hat V_n| 9^{-n}   \le 
6 (8/9)^n.
\end{equation}
What remains to be proven is that if one considers $\hat V^1_n$ as a subset of $V^1_n$, then  for any $x\in V^1_n\setminus I_n$, the distances
$\|\hat \Pr_x^t-\hat \pi_n \|_{\TV}$ and $\|\Pr_x^t-\pi_n \|_{\TV}$ are very close.
Note that 
\begin{equation}
\|\Pr_x^t-\pin \|_{\TV}\le \| \Pr_x^t- \hat \Pr_x^t \|_{\TV}+\|\hat \Pr_x^t-\hat \pi_n \|_{\TV}+\|\pi_n-\hat \pi_n \|_{\TV},
\end{equation}
The term $\|\pi_n-\hat \pi_n \|_{\TV} $ is  exponentially small in $n$ because only an exponentially small fraction of the vertices of $G_n^1$ 
lie outside of $\hat G^1_n$.
Now if one lets $T_{\partial \hat V^1_n}$ denote the hitting time of 
$$\partial \hat V^1_n:= \{\, x\in  V_n^1 \setminus \hat V^1_n \, : \, \exists y \in \hat V^1_n, \ x \sim y\, \},$$
(recall that $V_n^1$ is the vertex set of $G_n^1$) we have (by a standard coupling argument) that
\begin{equation}\label{eq: hitime}
 \| \Pr_x^t- \hat \Pr_x^t \|_{\TV}\le \Pr_x[ T_{\partial \hat V^1_n}\le t]\le \sum_{i=1}^t \Pr^i_x\left( \partial \hat V^1_n \right).
\end{equation}

Now if $x\in V^1_n\setminus I_n$, it lies at distance of at least $n/2$ from $\partial \hat V^1_n$ and has to overcome a drift to reach it.
For this reason it should take time which is exponentially large in $n$.
More rigorously, we let $\gO_x$ be the set of vertices $y\in V^1_n$ such that there exists a graph automorphism of $G^1_n$ preserving $a$ and $b$ which maps $x$ to $y$
(in most cases it is just a pedantic manner to describe the set of points at a fixed distance from $a$, but we have to introduce this definition due to the lack of symmetry 
of the $b$-side).
Note that $|\gO_x|/| \partial \hat V^1_n |\ge 2^{n/2}$ if $x\notin I_n$.
Hence we  have for all $i >0$ and $x \notin I_n$ that 
\begin{equation}\label{eq: OKI}
\Pr^i_x\left( \partial \hat V^1_n \right)= \frac{ \sum_{y\in \gO_x} \pi_n(y)\Pr^i_y\left( \partial \hat V^1_n \right)}{\pi_n(\gO_x)}
\le \frac{\pi_n( \partial \hat V^1_n )}{\pi_n(\gO_x)} \le \max_{v \in V_n^1}\deg (v) \frac{| \partial \hat V^1_n |}{|\Omega_x|} \le \frac{6}{2^{n/2}}.
\end{equation}

where in the first inequality we have used the stationarity of $\pi_n$, 
$$\sum_{y\in V^1_n} \pi_n(y)\Pr^i_y\left( \partial \hat V^1_n \right)= \pi_n( \partial \hat V^1_n).$$
%
%
%
%
%
%
%

Plugging this in \eqref{eq: hitime} we obtain \eqref{fastmix} or more precisely:

\begin{corollary}\label{cor: l2mox}
Set $t_n:= \lceil n \kappa^{-1}\log 9 \rceil $.
Then
\begin{equation}
\label{eq: rapidmixingoutsidestretch}
\lim_{n\to \infty} \max_{x \in V_n^1 \setminus I_n} \|\Pr_x^{t_n}-\pin \|_{\TV}=0.
\end{equation}
\end{corollary}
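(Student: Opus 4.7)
The plan is essentially to assemble the three ingredients already assembled in the paragraphs preceding the statement, and verify that each is $o(1)$ at time $t_n$ uniformly in $x \in V_n^1 \setminus I_n$. Using the triangle inequality
\begin{equation*}
\|\Pr_x^{t_n}-\pi_n\|_{\TV} \le \|\Pr_x^{t_n} - \hat\Pr_x^{t_n}\|_{\TV} + \|\hat\Pr_x^{t_n} - \hat\pi_n\|_{\TV} + \|\pi_n - \hat\pi_n\|_{\TV},
\end{equation*}
it suffices to show each term on the right tends to $0$ uniformly.

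First I would control the middle term, which does not depend on $x$ in any subtle way. By Proposition \ref{prop: controllingcloseverticestoZn}, the relaxation time on $\hat G^1_n$ is at most $\kappa^{-1}$, so applying the standard $\ell_2$-contraction bound (Lemma \ref{lem: contraction}), for every $x \in \hat V^1_n$,
\begin{equation*}
\|\hat \Pr_x^{t_n} - \hat\pi_n\|_{\TV} \le \frac{1}{\sqrt{\hat\pi_n(x)}} e^{-t_n/\hat\rel^{(n)}} \le 6\,(8/9)^n,
\end{equation*}
using $\min_y \hat\pi_n(y) \ge 1/(6|\hat V_n|) \ge 6^{-1}\cdot 2^{-3n}$ and the choice $t_n = \lceil n \kappa^{-1}\log 9 \rceil$. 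This is exponentially small.

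Next, the term $\|\pi_n - \hat\pi_n\|_{\TV}$ is exponentially small because the removed region $V^1_n\setminus \hat V^1_n$ consists of (stretched portions of) the first $n/2$ generations of $H^1$ on both sides, whose total degree sum is $O(Ln)$, while the total degree sum of $G^1_n$ is $\Theta(2^{3n})$ (dominated by the expander and by $\cL_{3n}$); so the ratio of the two stationary measures differs by a factor $1 + O(Ln\, 2^{-3n})$ and the total variation of the difference is $O(Ln\, 2^{-3n})$.

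The main point, and the only one that really uses the hypothesis $x \notin I_n$, is the first term. I would couple the two chains so that they agree up to the first visit of $X$ to $\partial \hat V^1_n$, giving
\begin{equation*}
\|\Pr_x^{t_n} - \hat\Pr_x^{t_n}\|_{\TV} \le \Pr_x[T_{\partial \hat V^1_n} \le t_n] \le \sum_{i=1}^{t_n} \Pr_x^i(\partial \hat V^1_n).
\end{equation*}
The graph $G^1_n$ admits a large automorphism group fixing $a$ and $b$ (the binary-tree copies and the four-way branching in Step~2 give enough symmetry), so for $x \notin I_n$ the orbit $\Omega_x$ of $x$ under the stabilizer of $\{a,b\}$ satisfies $|\Omega_x| \ge 2^{n/2} |\partial \hat V^1_n|$. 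Averaging over $\Omega_x$ and using stationarity of $\pi_n$, as in \eqref{eq: OKI}, gives $\Pr_x^i(\partial \hat V^1_n) \le 6\cdot 2^{-n/2}$ for every $i$, and summing $t_n = O(n)$ terms still yields an $O(n\, 2^{-n/2})$ bound, uniformly in $x \in V_n^1 \setminus I_n$.

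Combining these three bounds yields $\max_{x \in V_n^1 \setminus I_n}\|\Pr_x^{t_n}-\pi_n\|_{\TV} = O((8/9)^n) + O(n\, 2^{-n/2}) = o(1)$, as required. The only step that required genuine thought is the symmetry argument bounding $\Pr_x^i(\partial \hat V^1_n)$; once one verifies that the construction does produce enough symmetries on the $a$-side and on the $b$-side to make $|\Omega_x|$ exponentially larger than $|\partial \hat V^1_n|$ for $x \notin I_n$, the rest is routine.
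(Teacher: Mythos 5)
Your proposal follows the paper's own argument essentially verbatim: the same triangle-inequality decomposition, the $\ell_2$ contraction bound on $\hat G^1_n$ via Proposition \ref{prop: controllingcloseverticestoZn} and Lemma \ref{lem: contraction}, the coupling bound through $T_{\partial \hat V^1_n}$, and the orbit/stationarity estimate \eqref{eq: OKI}, so it is correct and takes the same route. One small slip worth fixing: the removed region $V^1_n\setminus \hat V^1_n$ is a stretched binary tree of depth $n/2$ on each side, so its total degree sum is $O(L\,2^{n/2})$ rather than $O(Ln)$; since this is still exponentially small compared with $\Theta(2^{3n})$, the bound $\|\pi_n-\hat\pi_n\|_{\TV}=o(1)$ and hence the conclusion are unaffected.
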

%
%
%
%
%

\subsection{A sequence of Lazy SRW on bounded degree expanders with separation
cutoff and no total-variation cutoff}
The following is a modification of Example \ref{ex: 3} into a sequence of lazy SRWs on a sequence of bounded degree graphs.
\begin{example}
\label{ex: 5}
Take a copy of $H^{4,1}_n$ with root $a$ and a copy of $H_n^1$ with root $b$.
We glue them together as follows: we give labels in $[2^{2n}]$ to the vertices in $\cL_{2n}$ in the two graphs and 
merge the vertices which share the same labels. We denote the set of merged vertices by $Z_n'$ (this is the set of vertices of distance $(L+3)n/2$ from $a$ and $b$). Let $G^2_n$ denote the obtained graph.
\end{example}

\begin{figure}[h]
\begin{center}
\leavevmode
\epsfxsize =7 cm
\psfragscanon
\psfrag{a}{\tiny $a$}
\psfrag{b}{\tiny $b$}
\psfrag{H0=An}{\tiny $H^1_n=A_n$}
\psfrag{H0=Bn}{\tiny $H^1_n=B_n$}
\psfrag{H31}{\tiny $H^{3,1}_n$}
\psfrag{H1}{\tiny $H^{2}_n$}
\psfrag{Z}{\tiny $Z$}
\psfrag{Z'}{\tiny $Z'$}
\epsfbox{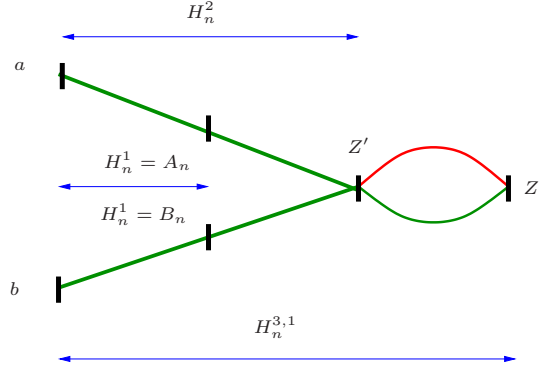}
\end{center}
\caption{\label{schematic} 
Schematic representation of Example \ref{ex: 5}. Here the asymmetry of $\cT_n$ is used to avoid cutoff in total-variation, in a similar manner to what is done in Example 
\ref{ex: 3}.
However, as in Example 
\ref{ex: 3},  the separation mixing-time is determined by the behavior of $T_{Z'}^{a,b}$ in the large deviation regime. Note that $Z'$ is a set of small equilibrium measure (it has 
$4^n$ vertices whereas
the full graph has order $8^n$ vertices).
}
\end{figure}

The reader can easily check that here $a$ and $b$ play symmetric roles. We let $A_n$ and $B_n$ denote the vertices within distance 
$(L+1)n/2$ from $a$ and $b$, respectively. Moreover,

\begin{itemize}
\item The center of mass $Z_n$ is given by the $2^{3n-1}$ vertices which are linked by the expander (which are the vertices belonging to $\cL_{3n}$ of $H^{4,1}_n$).
\item $Z_n$ is balanced seen from $A_n\cup B_n$.
\item $a$ and $b$ maximize (stochastically) the hitting time of $Z_n$.
\end{itemize}
It is then not difficult to check (see Fig.\ref{schematic}) from the construction that assumptions $(i)-(iii)$ Proposition
\ref{prop: hitcutoff} are satisfied. Assumption $(iv)$ can be showed to be satisfied as in the previous example by using an $\ell_2$ 
bound for the graph in which points within distance $Ln/2$ of $a$ and $b$ have been deleted.

\medskip

The asymptotic behavior of the hitting time of $Z_n$ from $a$ (or $b$) is once again given by 
\eqref{hittingZ} and hence the system does not display cutoff in total-variation.

\medskip

For cutoff in separation, we cannot use Proposition \ref{cor: sepcutoffcriterion}. 
We use instead Lemma \ref{lem: pathsdecompositions}, and the relevant set to hit is $Z'_n$.
This set is balanced seen from $I_{n}:=A_n\cup B_n$    and thus is the relevant one for the purpose of computing the separation mixing time.
An analog of the analysis performed for Example \ref{ex: 3}, does the job.  To control the quantity 
$P_{n}^t(x,y)/\pi_n(y)$  when one of $x$ and $y$ (or both) does not belong to $A_n\cup B_n$ we use an 
$\ell_2$ estimate (in conjunction with Lemma \ref{lem: CS}) for  the subgraph $\hat G^2_n$ obtained by deleting the stretched edges in $G^2_n$, similarly to what we have done in the analysis of Example \ref{ex: 4}.

\subsection{Proof of Remark \ref{prop: window}}
\label{sec: Rem 1.6}
Part (i) follows from the analysis of Example \ref{ex: 4}. 
We  shall prove now that part (ii) is satisfied by Example \ref{ex: 5}.

We denote by $\pi_{Z'}$ the distribution of $\pi_n$ conditioned on $Z'$ (suppressing the dependence on $n$). By \eqref{eq: septhroughz21} we have that for all $t$ and every $x \in A_n $ and $y \in B_n$
\begin{equation}
\label{eq: lowerbnd1}
P_{n}^t(x,y)/\pi_{n}(y)
=\sum_{k \le t }\mathbb{P}[T_{Z'}^{x,y}=k]\Pr^{t-k}_{\pi_{Z'}}(Z')/\pi_{n}(Z') \ge \mathbb{P}[T_{Z'}^{x,y}=t]/\pi_{n}(Z').
\end{equation}

We know from the previous analysis of Example \ref{ex: 5} 
that for the separation distance to equilibrium only $(x,y) \in A_n \times B_n$ matter, or more precisely
\begin{equation}
\lim_{n\to \infty} \sup_{t \ge 0} |\dsepn(t)- \max(0, 1- \min_{(x,y) \in A_n \times B_n} P_{n}^t(x,y)/\pi_{n}(y))|=0.
\end{equation}
Hence setting $$t^n_{\eta}(x,y):=\min \{t \, : \, P_{n}^t(x,y)/\pi_{n}(y)
 \ge 1-\eta \}$$
we prove that cutoff window is constant by proving that, for all $\gep>0$, there exist some $n_{\gep} \in \N $ and some absolute constant $C_2$ such that for all $n\ge n_{\gep}$ and all $ (x,y) \in A_n \times B_n$  
\begin{equation}\label{cutoffxy}
 t^n_{\gep}(x,y)-t^n_{1-\gep}(x,y)\le C_2 |\log  \gep|.   
\end{equation}
\begin{equation}\label{endoftimes}
  \forall t\ge t_{\gep}(x,y), \quad  P_{n}^t(x,y)/\pi_{n}(y) \ge 1-\gep.  
\end{equation}
In what follows for simplicity we drop the dependence in $n$ in the notation $t_\eta (x,y)$.
Although this is not used in the analysis below (and hence not proven), we can identify $t_{1/4}(x,y)$ for all $(x,y) \in A_n \times B_n$ as follows: 
$$\max \{ |t_{1/4}(x,y)-t'(x,y)|, |t_{1/4}(x,y)-\bar t(x,y)| \} \le C_3 ,$$ where $t'(x,y):=\inf \{t: \mathbb{P}[T_{Z'}^{x,y} \le t ] \ge \pi_{n}(Z') \} $ and $ \bar t(x,y):=\inf \{t: \mathbb{P}[T_{Z'}^{x,y}=t ] \ge \pi_{n}(Z') \} $. This follows from the analysis below, together with \eqref{eq: lowerbnd1} and the exponential decay of $\Pr^{t}_{\pi_{Z'}}(Z'))-\pi_n(Z')$ as a function of $t$.

\medskip

We start by presenting some general machinery which we shall utilize in the proof of Remark \ref{prop: window}. Let $\mu$ be a distribution over $\Z$. We say that $\mu$ is {\em \textbf{Unimodal}} if for any $z_1 \le z_2 \le z_{*}$ and for any $z_1 \ge z_2 \ge z_{*}$ we have that $\mu(z_1) \le \mu(z_2)$, where $z_{*}$ is the mode of $\mu$ (i.e.~$\max_z \mu(z)=\mu(z_{*})$). We say that $\mu$ is {\em \textbf{Log-Concave}}  if $\mu^2(z) \ge \mu(z-1)\mu(z+1)$ for all $z \in \Z$ (equivalently, for all $z_1<z_2$ ($z_1,z_2 \in \Z$) we have that $\frac{\mu(z_1+1)}{\mu(z_1)} \ge \frac{\mu(z_2+1)}{\mu(z_2)}$, where $0/0$ is interpreted as $0$).
\begin{fact}
\label{fact: logconcave}
Let $\mu$ be a log-concave distribution over $\Z$. Then $\mu$ is unimodal.\end{fact}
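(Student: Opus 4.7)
The plan is to work with the ratios $r(z) := \mu(z+1)/\mu(z)$ and show that the log-concavity inequality $\mu(z)^2 \ge \mu(z-1)\mu(z+1)$ is equivalent (with the $0/0 = 0$ convention) to the statement that $z \mapsto r(z)$ is non-increasing on $\Z$. Once this is in hand, unimodality drops out from comparing $r(z)$ with $1$ on each side of the mode $z_*$.

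First I would dispatch a preliminary observation: the support $S := \{z \in \Z : \mu(z) > 0\}$ of any log-concave distribution on $\Z$ is an interval (possibly infinite). Indeed, if $\mu(z_1) > 0$, $\mu(z_3) > 0$ with $z_1 < z_2 < z_3$ in $\Z$ but $\mu(z_2) = 0$, then one can pick the smallest $z_2 \in (z_1, z_3)$ with $\mu(z_2) = 0$; then $\mu(z_2 - 1) > 0$ while $\mu(z_2)^2 = 0 < \mu(z_2-1)\mu(z_2+1)$ unless $\mu(z_2+1) = 0$ too, and iterating shows the whole interval up to $z_3$ is zero, contradicting $\mu(z_3) > 0$. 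This lets me work with the ratios $r(z)$ unambiguously on $S$.

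Next, I would verify that $r$ is non-increasing on $S$ (extended by $r(z) = 0$ outside the appropriate range in line with the convention). Rearranging $\mu(z)^2 \ge \mu(z-1)\mu(z+1)$ gives $\mu(z+1)/\mu(z) \le \mu(z)/\mu(z-1)$, i.e.\ $r(z) \le r(z-1)$, whenever the denominators are positive; the support-interval fact makes this valid throughout $S$.

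Finally, let $z_*$ be the mode. By definition $\mu(z_* + 1) \le \mu(z_*)$, so $r(z_*) \le 1$; monotonicity then forces $r(z) \le 1$ for every $z \ge z_*$, i.e.\ $\mu(z+1) \le \mu(z)$ for such $z$, giving the non-increasing tail. Symmetrically $\mu(z_* - 1) \le \mu(z_*)$ gives $r(z_* - 1) \ge 1$, and monotonicity gives $r(z) \ge 1$ for $z < z_*$, so $\mu$ is non-decreasing on $(-\infty, z_*]$. I do not foresee a serious obstacle; the only mildly delicate point is handling the convention $0/0 = 0$ cleanly, which is why I prefer to first establish that the support is an interval so that the ratio argument is unambiguous.
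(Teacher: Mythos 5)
Your main line of argument --- cross-multiplying the log-concavity inequality to see that $r(z)=\mu(z+1)/\mu(z)$ is non-increasing, and then comparing $r$ with $1$ on either side of the mode --- is the standard proof and works fine once the support is known to be an interval (the paper itself states this Fact without proof, so correctness is the only issue). The genuine gap is in your preliminary step. From the pointwise inequality $\mu(z)^2\ge\mu(z-1)\mu(z+1)$ you correctly get $\mu(z_2+1)=0$ at the first internal zero $z_2$, but the ``iterating'' then stalls: at $z_2+1$ the inequality reads $\mu(z_2+1)^2\ge\mu(z_2)\mu(z_2+2)$, i.e.\ $0\ge 0$, which says nothing about $\mu(z_2+2)$. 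Moreover the claim cannot be repaired from the pointwise product form alone: the measure with $\mu(0)=\mu(3)=1/2$ and $\mu\equiv 0$ elsewhere satisfies $\mu(z)^2\ge\mu(z-1)\mu(z+1)$ for every $z\in\Z$, yet its support is not an interval and it is not unimodal in the sense of the paper's definition. So with the pointwise-product definition only, the statement you set out to prove is literally false; the two formulations in the paper's definition of log-concavity are equivalent only in the absence of internal zeros.

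The fix is to take as hypothesis the pairwise ratio form stated parenthetically in the paper (with the convention $0/0:=0$), or equivalently its cross-multiplied version $\mu(z_1+1)\mu(z_2)\ge\mu(z_1)\mu(z_2+1)$ for all $z_1<z_2$. Interval support is then immediate: if $z$ is the first zero to the right of a point of positive mass, so $\mu(z-1)>0=\mu(z)$, then taking $z_1=z-1$ and any $z_2\ge z$ gives $0=\mu(z)\mu(z_2)\ge\mu(z-1)\mu(z_2+1)$, hence $\mu(z_2+1)=0$ for all $z_2\ge z$, i.e.\ everything to the right of $z-1$ vanishes. With that in hand your monotone-ratio argument goes through verbatim on the support and yields unimodality. (In the paper's application the issue is invisible anyway: the relevant laws are convolutions of geometric distributions, whose support is a full half-line, so there are no internal zeros.)
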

\begin{fact}
\label{fact: logconcave2}
The family of Geometric distributions is log-concave.
\end{fact}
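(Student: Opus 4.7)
The plan is to verify log-concavity by a direct computation from the explicit PMF. If $X$ is geometric with parameter $p \in (0,1]$, then under either standard convention one has $\mu(k) = p(1-p)^{k}$ for $k \ge 0$ (or $\mu(k) = p(1-p)^{k-1}$ for $k \ge 1$). In both cases the ratio of consecutive values on the support is the constant $1-p$, that is, $\mu(k+1)/\mu(k) = 1-p$. Consequently $\mu(k)^2 = \mu(k-1)\mu(k+1)$ for every $k$ strictly inside the support. First I would write down the PMF with a fixed choice of convention, then display this one-line identity.

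The only thing to check beyond the interior identity is the log-concavity inequality $\mu(z)^2 \ge \mu(z-1)\mu(z+1)$ at (and beyond) the boundary of the support, where the distribution is defined on $\Z$ by extending by $0$ outside the support. At the left boundary (say $z=0$ in the first convention) one has $\mu(z-1) = 0$, so the right-hand side vanishes and the inequality is trivial; for $z$ outside the support both sides are $0$. Thus the extended inequality follows from the interior equality together with these trivial boundary cases. There is no real obstacle; the argument is a single computation of a ratio. Stated compactly, the ratio $\mu(k+1)/\mu(k)$ is non-increasing in $k$ (indeed constant), which is the equivalent reformulation of log-concavity given in the definition just above, so the fact is immediate.
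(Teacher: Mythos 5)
Your computation is correct and complete: the constant ratio $\mu(k+1)/\mu(k)=1-p$ on the support, together with the trivial boundary cases where one side vanishes, gives exactly the inequality $\mu^2(z)\ge\mu(z-1)\mu(z+1)$ on all of $\Z$ as required by the paper's definition. The paper states this fact without proof as a standard one, and your direct verification from the PMF is precisely the argument one would supply, so there is nothing to add.
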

\begin{fact}
\label{fact: logconcave3}
The family of log-concave distributions over $\Z$ is closed under convolutions.
\end{fact}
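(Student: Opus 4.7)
The plan is to verify the log-concavity inequality $\rho_n^2 \ge \rho_{n-1}\rho_{n+1}$ for $\rho = \mu * \nu$ directly, by rewriting the difference as a manifestly non-negative sum. Expanding and reindexing $m = n-i$, $m' = n-j$,
$$
\rho_n^2 - \rho_{n-1}\rho_{n+1} = \sum_{m,m' \in \Z} \mu_{n-m}\mu_{n-m'}\bigl[\nu_m \nu_{m'} - \nu_{m-1}\nu_{m'+1}\bigr].
$$
As a preliminary, I would record a ``concentration lemma'' for any log-concave non-negative sequence $(c_k)$: whenever $p + q = p' + q'$ and $\min(p,q) \le \min(p',q') \le \max(p',q') \le \max(p,q)$, one has $c_p c_q \le c_{p'} c_{q'}$. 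This follows by iterating the basic inequality $c_p c_q \le c_{p+1} c_{q-1}$ (valid for $p < q$), itself a direct restatement of the hypothesis that the ratios $c_{k+1}/c_k$ are non-increasing.

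The crux of the argument is an involution. Take $\phi(m,m') := (m'+1, m-1)$ on $\Z^2$; it is an involution whose fixed-point set is $\{m = m'+1\}$, on which the $\nu$-bracket vanishes identically. Under $\phi$ the $\nu$-bracket flips sign while the prefactor $\mu_{n-m}\mu_{n-m'}$ is replaced by $\mu_{n-m'-1}\mu_{n-m+1}$. Partitioning the sum by $\phi$-orbits and selecting, for each non-fixed orbit, the representative satisfying $m \ge m' + 2$, the total contribution of that orbit equals
$$
\bigl[\mu_{n-m}\mu_{n-m'} - \mu_{n-m'-1}\mu_{n-m+1}\bigr] \cdot \bigl[\nu_m\nu_{m'} - \nu_{m-1}\nu_{m'+1}\bigr].
$$

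The final step checks that each orbit contribution is non-negative. For a representative with $m \ge m'+2$, the pair $(m-1, m'+1)$ has the same sum as $(m, m')$ but strictly smaller range (the range drops by $2$), so the concentration lemma applied to $\nu$ yields $\nu_{m-1}\nu_{m'+1} \ge \nu_m \nu_{m'}$; analogously, $(n-m'-1, n-m+1)$ is more concentrated than $(n-m, n-m')$, and the lemma applied to $\mu$ yields $\mu_{n-m'-1}\mu_{n-m+1} \ge \mu_{n-m}\mu_{n-m'}$. Both brackets are therefore non-positive, so each orbit contributes a product of two non-positive numbers, i.e.\ a non-negative number; summing yields $\rho_n^2 \ge \rho_{n-1}\rho_{n+1}$. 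The only subtlety is absolute convergence (to justify re-summing by orbits), which is immediate since $\mu, \nu$ are probability distributions; a minor verification also shows that log-concavity forces contiguous support, so $\rho$ inherits contiguous support and no degenerate cases arise.
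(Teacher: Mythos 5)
Your proof is correct. The paper states this Fact without proof (it is a classical result about log-concave, i.e.\ P\'olya-frequency-2, sequences), so there is no in-paper argument to compare against; what you give is the standard symmetrization proof, carried out carefully. The reindexing of $\rho_n^2-\rho_{n-1}\rho_{n+1}$, the involution $(m,m')\mapsto(m'+1,m-1)$ with vanishing bracket on its fixed set $\{m=m'+1\}$, and the pairing of each off-diagonal orbit into a product of two non-positive brackets are all exactly right, and you correctly flag the two points that are usually glossed over: absolute convergence (needed to re-sum by orbits) and the fact that the inequality $c_k^2\ge c_{k-1}c_{k+1}$ forces contiguous support, which is what makes the ``non-increasing ratio'' form of the concentration lemma safe in the presence of zeros.
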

The following representation of hitting times in birth and death chains is due to Karlin and McGregor \cite[Equation (45)]{cf:Karlin}. It was later rediscovered by Keilson \cite{cf:Keilson}. The discrete time case of this result was given by Fill \cite[Theorem 1.2]{cf:Fill}.
\begin{theorem}
\label{thm: bdhit}
Let $([n],P,\pi)$ be a lazy birth and death chain (where $[n]:=\{1,2,\ldots,n\}$). Let $P'$ be defined by $P'(i,\cdot)=P(i,\cdot)$ if $i \in [n-1]$ and $P'(n,n)=1$. Denote the non-zero eigenvalues of $I-P'$ by $0< \beta_{1} \le \cdots \le \beta_{n-1} \le 1$. Let $\xi_1,\ldots,\xi_{n-1}$ be independent random variables such that $\xi_i \sim \mathrm{Geom}(\beta_i) $ for all $i \in [n-1]$. Then the distribution of $T_n$ under $\Pr_1$ is the same as the distribution of $\sum_{i \in [n-1]}\xi_i $.
\end{theorem}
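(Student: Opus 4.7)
The plan is to compute the probability generating function $\phi(s) := \mathbb{E}_1[s^{T_n}]$ directly and show that it factors as the PGF of $\sum_{i=1}^{n-1}\xi_i$. Let $Q$ denote the $(n-1)\times(n-1)$ sub-stochastic matrix obtained by restricting $P'$ to $[n-1]$. Writing $P'$ in block form $\bigl(\begin{smallmatrix} Q & r \\ 0 & 1 \end{smallmatrix}\bigr)$ with $r=(0,\dots,0,P(n-1,n))^\top$, the spectrum of $P'$ is the spectrum of $Q$ together with $\{1\}$, so the $n-1$ non-zero eigenvalues of $I-P'$ are exactly $\{1-\lambda_i\}_{i=1}^{n-1}$, where $\lambda_1,\dots,\lambda_{n-1}$ are the eigenvalues of $Q$. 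I set $\beta_i := 1-\lambda_i$ in the ordering prescribed by the statement (so smallest $\beta$ corresponds to largest $\lambda$).

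I would then pin down the spectral information of $Q$. By reversibility of the birth-and-death chain, $Q$ is similar via $D^{1/2}$ (with $D=\mathrm{diag}(\pi(1),\dots,\pi(n-1))$) to a real symmetric matrix, so each $\lambda_i$ is real. Laziness gives $P=\tfrac12 I+\tfrac12 R$ with $R$ stochastic, hence $Q=\tfrac12 I_{n-1}+\tfrac12 R'$ with $R'$ sub-stochastic; combined with reality of the spectrum this forces $\lambda_i \geq 0$. Since $P(n-1,n)>0$, the matrix $Q$ is strictly sub-stochastic and irreducible, so by Perron--Frobenius its spectral radius is strictly less than $1$, giving $\lambda_i<1$. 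Thus every $\beta_i\in(0,1]$, as claimed.

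Next, summing the geometric series for $\mathbb{P}_1[T_n>k]=(Q^k\mathbf{1})_1$ yields
\begin{equation*}
\phi(s) \;=\; 1-(1-s)\bigl((I-sQ)^{-1}\mathbf{1}\bigr)_1 \;=\; \frac{P(s)}{\prod_{i=1}^{n-1}(1-\lambda_i s)},
\end{equation*}
where $P(s)$ is a polynomial of degree at most $n-1$ by Cramer's rule. Because the walk needs at least $n-1$ strictly-increasing moves to travel from $1$ to $n$, we have $\mathbb{P}_1[T_n=k]=0$ for $k<n-1$, so $\phi(s)=O(s^{n-1})$ as $s\to 0$; multiplying by the denominator (which equals $1$ at $s=0$) forces $P$ itself to vanish to order at least $n-1$ at the origin, and combined with $\deg P\leq n-1$ this gives $P(s)=c\,s^{n-1}$. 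The normalization $\phi(1)=1$ (using that the chain is almost surely absorbed at $n$) then pins $c=\prod_i(1-\lambda_i)=\prod_i\beta_i$, so
\begin{equation*}
\phi(s) \;=\; \prod_{i=1}^{n-1}\frac{\beta_i s}{1-(1-\beta_i)s},
\end{equation*}
which is exactly the product of the PGFs of independent $\mathrm{Geom}(\beta_i)$ random variables; uniqueness of the PGF then gives the asserted equality in distribution.

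The only truly substantive step is the spectral one: reality of the $\lambda_i$ genuinely uses reversibility, non-negativity genuinely uses laziness (without it one could have $\beta_i>1$, which is incompatible with a geometric interpretation), and $\lambda_i<1$ uses that $n$ is accessible from $n-1$ so that $Q$ is strictly sub-stochastic. Everything afterwards is just a short manipulation of a rational function; in particular the argument works uniformly regardless of whether the $\lambda_i$ are distinct, so no appeal to the simple-spectrum property of irreducible Jacobi matrices is required.
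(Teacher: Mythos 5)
The paper does not prove this statement at all: it imports it as a known result of Karlin--McGregor (rediscovered by Keilson; discrete-time case by Fill) and only cites the references. Your argument is therefore necessarily a different route, and I checked it: it is correct. The block-triangular form of $P'$ correctly identifies the non-zero eigenvalues of $I-P'$ with $\{1-\lambda_i\}$ for $\lambda_i\in\mathrm{spec}(Q)$; reversibility, laziness and the strict sub-stochasticity of the irreducible block $Q$ do exactly the work you assign them in placing each $\beta_i$ in $(0,1]$ (and the Perron--Frobenius bound $\rho(Q)<1$ also justifies the a.s.\ absorption you invoke for $\phi(1)=1$); the identity $\phi(s)=1-(1-s)\bigl((I-sQ)^{-1}\mathbf{1}\bigr)_1$ is a correct Abel summation of $\mathbb{P}_1[T_n>k]=(Q^k\mathbf{1})_1$; and the degree count from the adjugate together with the order-$(n-1)$ vanishing at $s=0$ (which uses that the chain is nearest-neighbour, hence genuinely a birth-and-death structure) pins the numerator down to $cs^{n-1}$ with $c=\prod_i\beta_i$. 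The factorization into geometric PGFs on $\{1,2,\dots\}$ then matches the support $T_n\ge n-1$, and, as you note, no simplicity of the spectrum is needed. By comparison, Fill's proof (the reference the paper actually leans on) proceeds by strong stationary duality, intertwining the chain with a pure-birth chain so that the geometric variables appear as an explicit probabilistic construction; that route yields a ``stochastic'' realization of the decomposition, whereas yours is a shorter, purely algebraic verification of equality of generating functions. Either is a legitimate substitute for the citation; yours has the advantage of being self-contained in half a page.
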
    
We are now ready to prove  \eqref{cutoffxy} and \eqref{endoftimes}. For clarity of exposition, we first expose  our analysis for the special case $x=a$, $y=b$.
Consider the sequence of graphs $G^2_n$ from Example \ref{ex: 5}. Let $G_n^3$  the subgraph of $G^2_n$ whose set of vertices is given by
$$V_n^3:=\{v : \mathrm{dist}(v,\{a,b\}) \le   (L+3)n/2 \},$$
 and whose edges are those of $E^2_n$ for which both ends are in $G^3_n$ 
(Note that this graph is connected and includes $Z'$ but not any point further away from $\{a,b\}$)

\medskip

Let $(Y_t)_{t \in \Z_+}$ be lazy SRW on $G_n^3$. Consider the projection $\bar Y_t:=1+\mathrm{dist}(Y_{t},\{a,b\})$.
 Our construction implies that the projection is Markovian and thus
$(\bar Y_t)_{t \in \Z_+}$ is a lazy birth and death chain on $[1+(L+3)n/2]$.
Consequently, by Theorem \ref{thm: bdhit} and Facts \ref{fact: logconcave2}-\ref{fact: logconcave3}, 
 the law of  $T^{a,b}_{Z'}$, which is a sum of independent hitting time 
and thus of geometric variables, is log-concave.
For any $v\in V^3_n$ the distribution of $T_{Z'}$,
given that $Y_0=v$, is the same as that of $T_{1+(L+3)n/2} $ (for the chain $(\bar Y_t)$), given that $\bar Y_0=\mathrm{1+dist}(v,\{a,b\})$.     
Consequently, by Theorem \ref{thm: bdhit} and Facts \ref{fact: logconcave2}-\ref{fact: logconcave3}, the law of $T_{Z'}^{a,b}$ is log-concave.
Let $z_{*}$ be the mode of $T_{Z'}^{a,b}$.  A standard computation is sufficient to show that
\begin{equation}
\label{eq: lowerbnd1'}
|z_{*}-\mathbb{E}[T_{Z'}^{a,b}]| \le C_4 \sqrt{\mathrm{Var}(T_{Z'}^{a,b})} \le C_5 \sqrt{n}, \end{equation}
(in fact, the first inequality follows from unimodality).

Fix some $\delta>0$ sufficiently small such that $\mathbb{P}[T_{Z'}^{a,b} \le  z_*-\delta n]\gg  2^{-n}$ 
 ($2^{-n}$ is the order of magnitude of $\pi_n(Z')$). 
By a large-deviation estimate and log-concavity there is some $\alpha>1$ such that for all sufficiently large $n$ we have that $$\alpha^{\delta n} \le \frac{\mathbb{P}[T_{Z'}^{a,b} =   z_*]}{\mathbb{P}[T_{Z'}^{a,b} =   z_*-\lfloor \delta n \rfloor]} \le \left( \frac{\mathbb{P}[T_{Z'}^{a,b} =   z_*-\lfloor \delta n \rfloor +1]}{\mathbb{P}[T_{Z'}^{a,b} =   z_*-\lfloor \delta n \rfloor]} \right)^{\lfloor \delta n \rfloor} $$ hence, again by log-concavity,
\begin{equation}\label{lalapha}
\forall t\le z_*- \delta n,\quad     \frac{\mathbb{P}[T_{Z'}^{a,b} =  t +1]}{\mathbb{P}[T_{Z'}^{a,b} = t ]}>\alpha.
\end{equation}
Consequently, by \eqref{eq: lowerbnd1}
\begin{equation}
\label{eq: monotonicity3}
\forall t \le z_*-\delta n, \quad  \frac{P_{n}^{t+1}(a,b)}{\pi_{n}(b)} 
\ge \alpha \sum_{k \le t+1 }\mathbb{P}[T_{Z'}^{x,y}=k-1]\frac{\Pr^{t+1-k}_{\pi_{Z'}}(Z')}{\pi_{n}(Z')}   =  \alpha \frac{P_{n}^{t}(a,b)}{\pi_{n}(b)} 
 .
\end{equation}
As $T_{Z'}^{a,b}$ is log-concave and hence by Fact \ref{fact: logconcave} also unimodal, \eqref{eq: lowerbnd1} also yields that
\begin{equation}
\label{eq: monotonicity1}
\forall t \in [z_*-\delta n,z_*), \quad \frac{P_{n}^t(a,b)}{\pi_{n}(b)}
 \le \sum_{k \le t }\mathbb{P}[T_{Z'}^{x,y}=k+1]\frac{\Pr^{t-k}_{\pi_{Z'}}(Z')}{\pi_{n}(Z')} \le \frac{P_{n}^{t+1}(a,b)}{\pi_{n}(b)}
,
\end{equation}
and that there exist some absolute constants $c,C_{6}>0$,  $\gb\in(1,2)$ such that
\begin{equation}
\label{eq: monotonicity2}
\forall t \in [z_*, z_*+n^{2/3}), \quad \frac{P_{n}^t(a,b)}{\pi_{n}(b)}
\ge  \frac{\mathbb{P}[T_{Z'}^{a,b}=z_*+ \lceil n^{2/3} \rceil ]}{\pi_{n}(Z')} \ge c \beta^n .
\end{equation}
\begin{equation}
\label{eq: monotonicity4}
\forall t \ge z_*+n^{2/3}, \quad 1- \frac{P_{n}^t(a,b)}{\pi_{n}(b)}
\le\mathbb{P}[T_{Z'}^{a,b}> t] \le C_6/n^{1/3}.
\end{equation}
This concludes the proof of the case $(x,y)=(a,b)$  as \eqref{eq: monotonicity3} implies \eqref{cutoffxy} with $C_2:=(\log \alpha)^{-1}$ 
and \eqref{endoftimes} can be deduced from the four other equations. For general $(x,y) \in A_n \times B_n $ 
we decompose $T_{Z'}^{x,y}$ into a convolution of a log-concave distribution and some other negligible term. Let $(X_t^x)_t $ and $(X_t^y)_t $ be independent realizations of the random walk, started from respective initial vertex $x$ and $y$, defined on the same probability space. Let $T_{Z'}^{x}:=\inf \{t: X_t^x \in Z' \}$ and $T_{Z'}^{y}:=\inf \{t: X_t^y \in Z' \}$. We define $T'_x$  (and $T'_y$ in an analogous manner, using $(X_t^y)$ and $T_{Z'}^y$) as follows (with the convention $\sup \emptyset=0$)
 $$T'_x:= \sup \{ t \ : \ t<T_{Z'}^x, \mathrm{dist}(X_{t-1}^x,Z')=\mathrm{dist}(x,Z')+1 \}$$ 
Note that $T_x'$, $T_{Z'}^{x}-T_x'$, $T_y'$ and $T_{Z'}^{y}-T_y'$ are independent. We denote $T_1=T_{1}(x,y):=(T_{Z'}^x-T'_x)+(T_{Z'}^y-T_y') $ and  $T_2=T_2(x,y):=T_{x}'+T_y'$.  
By Theorem \ref{thm: bdhit} and Facts \ref{fact: logconcave2}-\ref{fact: logconcave3} the laws of $T_{Z'}^x-T_x'$ and $T_{Z'}^y-T_y'$ are log-concave (by a similar  argument to the one used before using a projection to a birth and death chain), and so $T_1$ is also log-concave (by Fact \ref{fact: logconcave3}). Observe that 
 $T_1+T_{2}$ has the same law as $T_{Z'}^{x,y}$.

Denote the mode of $T_1$ by $z_*=z_{*}(x,y)$.
Fix some $\delta>0$ sufficiently small such that $\min_{(x,y) \in A_n \times B_n} \mathbb{P}[T_{1}(x,y)\le  z_*(x,y)-\delta n]\gg 2^{-n}$. 
 Imitating the proof of \eqref{eq: monotonicity3}, using a large-deviation estimate on $ \frac{\mathbb{P}[T_{1}(x,y) =   z_*(x,y)]}{\mathbb{P}[T_{1}(x,y) =   z_*(x,y)-\lfloor \delta n \rfloor]}$ which is uniform in $(x,y)$ (the existence of such a uniform large-deviation estimate follows from the analysis of Example \ref{ex: 3}, or alternatively, by \cite[Lemma 6.2]{cf:Basu}), together with log-concavity, we get  that if $\alpha>1$ is chosen sufficiently small, then \eqref{lalapha} remains valid simultaneously for all choices of $x,y$,  if one replaces $T_{Z'}^{a,b}$ by $T_1(x,y)$ (and $z_*$ with $z_*(x,y)$).
 We argue that \eqref{eq: lowerbnd1'}-\eqref{eq: monotonicity4} can be extended (excluding the middle terms) to all $(x,y) \in A_n \times B_n $ (in the role of $(a,b)$), with the same choice of constants   for all $(x,y) \in A_n \times B_n$. 
 To extend \eqref{eq: monotonicity3} and \eqref{eq: monotonicity1}, note that 
 after conditioning on $T_2$ we can imitate the above proofs and so the extensions are obtained by averaging over $T_2$. 
 For \eqref{eq: monotonicity2}, note that by unimodality 
 $$\mathbb{P}[T_{Z'}^{x,y}=z_*(x,y)+ \lceil n^{2/3} \rceil ]/\pi_{n}(Z') \ge c_12^n 
 \mathbb{P}[T_2(x,y) \le \lceil n^{2/3} \rceil] \mathbb{P}[T_1(x,y) =z_*(x,y)+ \lceil n^{2/3} \rceil] .$$
It is not hard to show that there exists some $\gamma<2$ and $c_{2},C_{6}>0$ such that 
$$ \mathbb{P}[T_1(x,y) =z_*(x,y)+ \lceil n^{2/3} \rceil] \ge c_{2} \gamma^{-n} \text{ and }\mathbb{P}[T_2(x,y) \le \lceil n^{2/3} \rceil] \ge 1- C_{6}n^{-2/3}. $$ 
for all $(x,y) \in A_n \times B_n$ (by Markov inequality and the fact that 
$\max_{(x,y) \in A_n \times B_n} \mathbb{E}[T_2(x,y)]=O(1) $). For \eqref{eq: lowerbnd1'} 
use unimodality (first inequality) to show that for all $(x,y) \in A_n \times B_n$  
$$|z_{*}(x,y)-\mathbb{E}[T_{1}(x,y)]| 
\le C_4 \sqrt{\mathrm{Var}(T_{1}(x,y))} \le C_4 \sqrt{\mathrm{Var}(T_{Z'}^{a,b})} \le C_5 \sqrt{n}.$$ 

Lastly, for \eqref{eq: monotonicity4} 
use \eqref{eq: lowerbnd1'} and Chebyshev's inequality (by noting that $|z_{*}(x,y)-\mathbb{E}[T_{Z'}^{x,y}]| \le |z_{*}(x,y)-\mathbb{E}[T_{1}(x,y)]|
+\mathbb{E}[T_{2}(x,y)] \le C_7 \sqrt{n} $). We leave the details to the reader. \qed
\appendix

\section{Proof of Technical results}

\subsection{Basic ingredients}
In our analysis we use various times raw $\ell_2$ bounds in order to get estimates on total-variation distance.
We cite here a standard result (see e.g.~\cite[Lemma 12.16]{cf:LPW}).
\begin{lemma}
\label{lem: contraction}
Let $(\Omega,P,\pi)$ be a finite lazy  irreducible reversible Markov chain.
Let
$\mu $ be a distribution on $\Omega$ and let $\lambda_2$ be the second largest
eigenvalue of $P$. Then
\begin{equation}
\label{eq: L2contraction}
 2\|\Pr_\mu^t-\pi \|_{\TV} \le   \|\Pr_\mu^t-\pi \|_{2,\pi} \le  \lambda_2^t
\|\mu-\pi \|_{2,\pi}, \text{ for all }t \ge 0.
\end{equation}
\end{lemma}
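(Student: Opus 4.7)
The plan is to prove the two inequalities separately; both are standard and short. The first one is pure Cauchy--Schwarz. For the second, I would realize the chain as a self-adjoint contraction on $L^2(\pi)$, observe that $\Pr_\mu^t / \pi - 1$ is the image under $P^t$ of the centered density $\mu/\pi - 1$, and exploit orthogonality of this centered density to the constant eigenfunction.

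For the first inequality, write $g(x) := (\Pr_\mu^t(x) - \pi(x))/\pi(x)$, so that $\|\Pr_\mu^t - \pi\|_{2,\pi}^2 = \sum_x \pi(x) g(x)^2$. Then
\begin{equation*}
2\|\Pr_\mu^t - \pi\|_{\TV} = \sum_x |\Pr_\mu^t(x) - \pi(x)| = \sum_x \pi(x) |g(x)| \le \Bigl(\sum_x \pi(x) g(x)^2\Bigr)^{1/2} = \|\Pr_\mu^t - \pi\|_{2,\pi},
\end{equation*}
by Cauchy--Schwarz applied to the constant function $1$ and $|g|$ in the inner product $\langle \cdot, \cdot\rangle_\pi$.

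For the second inequality, set $h_0(x) := \mu(x)/\pi(x)$ and $h_t := P^t h_0$. Reversibility gives $P^t(x,y)/\pi(y) = P^t(y,x)/\pi(x)$, which implies
\begin{equation*}
(P^t h_0)(x) = \sum_y P^t(x,y)\frac{\mu(y)}{\pi(y)} = \sum_y \mu(y)\frac{P^t(y,x)}{\pi(x)} = \frac{\Pr_\mu^t(x)}{\pi(x)},
\end{equation*}
so $h_t - 1 = P^t(h_0 - 1)$. Since $P$ is self-adjoint on $L^2(\pi)$, it admits an orthonormal eigenbasis $\{f_i\}$ with eigenvalues $1 = \lambda_1 > \lambda_2 \ge \cdots \ge \lambda_{|\Omega|}$; laziness forces all $\lambda_i \ge 0$, so $\max_{i \ge 2} |\lambda_i| = \lambda_2$. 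The function $h_0 - 1$ is orthogonal to $f_1 \equiv 1$ because $\langle h_0 - 1, 1\rangle_\pi = \sum_x \mu(x) - 1 = 0$, hence expanding $h_0 - 1 = \sum_{i \ge 2} c_i f_i$ yields
\begin{equation*}
\|h_t - 1\|_{2,\pi}^2 = \sum_{i \ge 2} \lambda_i^{2t} c_i^2 \le \lambda_2^{2t} \sum_{i \ge 2} c_i^2 = \lambda_2^{2t} \|h_0 - 1\|_{2,\pi}^2.
\end{equation*}
Taking square roots and recognizing $\|h_t - 1\|_{2,\pi} = \|\Pr_\mu^t - \pi\|_{2,\pi}$ and $\|h_0 - 1\|_{2,\pi} = \|\mu - \pi\|_{2,\pi}$ completes the proof.

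There is no real obstacle: the only point to watch is invoking laziness (to drop the absolute value around $\lambda_{|\Omega|}$) so that $\lambda_2$ as defined (the second-largest, not the second-largest in modulus) is the correct contraction constant. Both steps are completely standard and follow from spectral theory of reversible chains as in \cite[Chapter 12]{cf:LPW}.
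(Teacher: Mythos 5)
Your proof is correct: the Cauchy--Schwarz step for the first inequality and the spectral decomposition of the centered density (with laziness forcing nonnegative eigenvalues, so $\lambda_2$ is indeed the contraction constant) are exactly the standard argument. The paper does not prove this lemma itself but cites it as \cite[Lemma 12.16]{cf:LPW}, and your argument is precisely the proof given in that reference, so there is nothing to add.
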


\begin{lemma}[Hitting time from stationary tail estimates]
Let $(\Omega,P,\pi)$ be a finite   irreducible reversible Markov chain. Let $A \subset \Omega $. Then for any $t \ge0$ we have that
\begin{equation}
\label{eq: hittingfrompi}
\Pr_{\pi}[T_A > t] \le (1-\pi(A))\exp \left(-t \pi(A)/\rel \right).
\end{equation}
\end{lemma}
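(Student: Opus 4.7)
The plan is to express the tail probability as a quadratic form in the substochastic matrix obtained by killing the chain on $A$, and to control its operator norm via the spectral-gap Poincar\'e inequality. Let $Q$ denote the restriction of $P$ to $\Omega\setminus A$; by reversibility $Q$ is self-adjoint on $\ell^{2}(\pi|_{A^c})$, and
\[
\Pr_{\pi}[T_A > t] \;=\; \sum_{x \notin A}\pi(x)\sum_{y \notin A} Q^{t}(x,y) \;=\; \langle \mathbf{1}_{A^c},\,Q^{t}\mathbf{1}_{A^c}\rangle_{\pi}.
\]

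First I would show that the operator norm of $Q$ on $\ell^{2}(\pi|_{A^c})$ is at most $1 - \pi(A)/\rel$. Let $\beta$ be the Perron eigenvalue of $Q$ with nonnegative eigenvector $h$, normalized so that $\|h\|_{2,\pi}^{2}=1$, and extend $h$ by zero to $A$. Then $\langle Ph,h\rangle_\pi = \beta$, so the Dirichlet form equals $\mathcal{E}(h,h) = 1-\beta$. Cauchy--Schwarz gives $(\mathbb{E}_\pi h)^{2} \le \pi(A^c)\|h\|_{2,\pi}^{2} = \pi(A^c)$, hence $\mathrm{Var}_\pi(h) \ge \pi(A)$. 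Combined with the Poincar\'e inequality $\mathcal{E}(h,h) \ge (1-\lambda_2)\mathrm{Var}_\pi(h) \ge \mathrm{Var}_\pi(h)/\rel$ (since $1-\lambda_2 \ge 1/\rel$ by the definition of $\rel$), this yields $\beta \le 1 - \pi(A)/\rel$. For the smallest eigenvalue $\beta_{\min}$ of $Q$, the min-max principle applied to test functions on $A^{c}$ extended by zero gives $\beta_{\min}\ge \lambda_{|\Omega|}$, so $|\beta_{\min}| \le |\lambda_{|\Omega|}| \le 1-1/\rel \le 1 - \pi(A)/\rel$, and therefore $\|Q\| \le 1 - \pi(A)/\rel$.

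The conclusion follows by expanding $\mathbf{1}_{A^c}$ in the orthonormal eigenbasis of $Q$ and using $\sum_i \beta_i^{t}|c_i|^{2} \le \|Q\|^{t}\sum_i |c_i|^{2}$:
\[
\Pr_{\pi}[T_A>t] \;\le\; \|Q\|^{t}\,\|\mathbf{1}_{A^c}\|_{2,\pi}^{2} \;\le\; \Bigl(1-\tfrac{\pi(A)}{\rel}\Bigr)^{\!t}\pi(A^c) \;\le\; (1-\pi(A))\,e^{-t\pi(A)/\rel},
\]
using $1-x\le e^{-x}$ in the last step. The one genuinely nontrivial point is the bound $\beta \le 1 - \pi(A)/\rel$: it relies on recognizing that the Perron eigenvector $h$ of $Q$, extended by zero, is an admissible test function for the Poincar\'e inequality of the ambient chain $P$, and that the normalization $\|h\|_{2,\pi}=1$ together with Cauchy--Schwarz on the support $A^c$ forces $\mathrm{Var}_\pi(h) \ge \pi(A)$. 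Everything else reduces to routine spectral algebra and the elementary inequality $1-x\le e^{-x}$.
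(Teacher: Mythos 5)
Your proof is correct. Note that the paper does not prove this lemma itself; it only cites \cite[Lemma 3.5]{cf:Basu} and \cite{cf:Aldous}, and your argument is essentially the standard spectral one underlying those references: identify $\Pr_{\pi}[T_A>t]$ with the quadratic form $\langle \ind_{A^c},Q^{t}\ind_{A^c}\rangle_{\pi}$ of the kernel killed on $A$, and bound its largest (Dirichlet) eigenvalue by $1-\pi(A)(1-\lambda_2)\le 1-\pi(A)/\rel$ via the extremal characterization of the spectral gap --- exactly your Perron-eigenvector-plus-Cauchy--Schwarz step. One small repair: the clause ``$\beta_{\min}\ge \lambda_{|\Omega|}$, so $|\beta_{\min}|\le |\lambda_{|\Omega|}|$'' is not literally valid when $\beta_{\min}>0$ (then $|\beta_{\min}|$ may well exceed $|\lambda_{|\Omega|}|$); but in that case $|\beta_{\min}|=\beta_{\min}\le \beta$, so the bound $\|Q\|\le 1-\pi(A)/\rel$ stands in all cases. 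In fact the whole discussion of $\beta_{\min}$ is unnecessary: $Q$ is self-adjoint on $\ell^2(\pi|_{A^c})$ and has nonnegative entries, so its operator norm equals its spectral radius, which by Perron--Frobenius is the Perron eigenvalue $\beta$ itself; alternatively, for any $t$ one has $\beta_i^{t}\le\max(\beta_i,0)^{t}\le\beta^{t}$ term by term in your eigenexpansion. With that simplification the argument never invokes $|\lambda_{|\Omega|}|$ and, as required by the statement, uses no laziness assumption.
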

For a proof see \cite[Lemma 3.5]{cf:Basu} (or Proposition 3.21 in conjunction with Theorem 3.33 and Corollary 3.34 in \cite{cf:Aldous}).

\subsection{Proof of \eqref{groto2}} \label{sec: groto2}

We can assume that the chain displays pre-cutoff in separation as if not, there is nothing to prove.
We know from \eqref{eq:TVsepasymeqiv}  that for every $\gep>0$, $ \mixn(\gep) \le \sepn(\gep)  .$
Hence, in our case, it is sufficient to prove that
\begin{equation}\label{eq:thingtoprove}
 \limsup_{\gep\to 0} \limsup_{n\to \infty} \frac{\sepn(1- \gep)}{\mixn(1-2\sqrt{\gep})}\le 2,
\end{equation}
as pre-cutoff implies that $\frac{\sepn(1- 2\sqrt{\gep})}{\sepn(1- \gep)}$ tends to $1$ when $n$ 
goes to infinity and $\gep$ goes to $0$ in this order.

\medskip

We shall show that for all $n$
\begin{equation}\label{eq: mixcompa}
 \sepn(1- \gep) \le 2\mixn(1-2\sqrt{\gep})+2 \reln \log \gep^{-1}.
\end{equation}
This is sufficient to conclude as Fact \ref{fact: cutoffandtrel} ascertains that the second term is small in comparison to the first.

\medskip

To prove \eqref{eq: mixcompa}, let us introduce the following alternative way of measuring the distance to equilibrium,
\begin{equation}\label{eq:defbard}
 \bar d(t):= \max_{x,y \in \gO} \|\Pr_x(X_t\in \cdot)- \Pr_y(X_t\in \cdot)\|_{\TV}.
\end{equation}
From \cite[Lemma 19.3]{cf:LPW} (or \eqref{mixingbndonsep}) we know that for every $t$ we have 
\begin{equation}
 d_{\sep}(t)\le 1-(1-\bar d(t/2))^2.
\end{equation}
Hence if one defines $\bar{t}_{\mathrm{mix}}(\gep)$ to be the first time at which $\bar d(t)\le \gep$, we have for every $\gep>0$.
\begin{equation}
 \sep(1- \gep)\le 2 \bar{t}_{\mathrm{mix}}(1-\sqrt{\gep}).
\end{equation}
Now to conclude the proof of \eqref{eq:thingtoprove} we need to show that (under reversibility)
\begin{equation}
 \bar{t}_{\mathrm{mix}}(1-\sqrt{\gep})\le \mix(1-2\sqrt{\gep})+ \rel \log \gep^{-1}.
\end{equation}
Let us set 
\begin{equation}
t=\mix(1-2\sqrt{\gep}) \quad \text{ and } s=\rel \log \gep^{-1}.
\end{equation}
For $x\in \gO$, we set  $\mu^x:= \Pr_{x}(X_t \in \cdot )$ and 
\begin{equation}
 \mu^x_1(z):= \frac{\min ( \mu^x(z), \pi(z))}{1-\|\mu^x-\pi\|_{\TV}} \text{ and } \mu_2^x(z):=\frac{(\mu^x(z)-\pi(z))\ind_{\{\mu^x(z)> \pi(z)\}}}{\|\mu^x-\pi\|_{\TV}}.
\end{equation}
Both are probability measures and $\mu^x$ can be written as a linear combination of the two
$$\mu^x= (1-\|\mu^x-\pi\|_{\TV})\mu^x_1+ \|\mu^x-\pi\|_{\TV}\mu^x_2.$$
For $\mu$, a distribution on $\gO$,   $f,g \in \R^{\gO}$ and $1 \le p<\infty$, we introduce the notation
$$\mu P^s(x):= \sum_{y\in \gO} \mu(y) P^s(y,x),\quad P^sf(x)=\sum_{y \in \gO}P^s(x,y)f(y),\quad \|g\|_{p}=(\sum_{y} \pi(y)|g(y)|^p)^{1/p}.$$  
Assuming that $x$ and $y$ maximize the l.h.s.\ in \eqref{eq:defbard} at time $t+s$ and that 
$$\|\mu^x-\pi\|_{\TV}\ge \|\mu^y-\pi\|_{\TV}$$ then by
the triangular inequality we have that 
\begin{multline}
 \bar d(t+s)=  \|  \mu^x P^s -  \mu^y P^s \|_{\TV}\\
 \le \left(1-\|\mu^x-\pi\|_{\TV}\right) \| \mu^x_1 P^s -\mu^y_1  P^s \|_{\TV}+\|\mu^x-\pi\|_{\TV} \| \mu^x_2 P^s- \tilde \mu^y_2 P^s\|_{\TV}, 
\end{multline}
where $\tilde \mu^y_2$ is an adequate linear combination of $\mu^y_1$ and $\mu^y_2$.
Using the definition of $t$ we have $\|\mu^x-\pi\|_{\TV}\le 1-\sqrt{\gep}$, and hence we have  
\begin{equation}
  \bar d(t+s)\le 1-2\sqrt{\gep}+  \| \mu^x_1 P^s -\mu^y_1  P^s \|_{\TV},
\end{equation}
Now to estimate the second term we set $f(z)= \frac{\mu^x_1(z)}{\pi(z)}- \frac{\mu^y_1(z)}{\pi(z)}$. Observe that by reversibility $ \mu^x_1 P^s -\mu^y_1  P^s=P^sf$.
Note that by definition $|f(z)|\le (1-\|\mu^x-\pi\|_{\TV})^{-1}\le 2\gep^{-1/2}$.  Let $1=\gl_1 \ge \gl_2 \ge \ldots \ge \lambda_{|\gO|}$ be the eigenvalues of $P$ and set $\gl:=\max(\gl_2,|\gl_{|\gO|}|) $. Using the spectral-decomposition of $f$ along with the fact that $\sum_z \pi(z)f(z)=0 $ (and finally, the choice of $s$) it is standard to show that $\|P^sf \|_2 \le \gl^s \|f\|_2 \le \gl^s (2\gep^{-1/2} )\le 2 \sqrt{ \eps} $. Hence 
\begin{equation*}
 \| \mu^x_1 P^s -\mu^y_1  P^s \|_{\TV}= \frac{1}{2} \|P^sf\|_1 \le \frac12 \|P^sf\|_2 \le \sqrt{\eps}, \\ 
\end{equation*}
as desired.
\qed

\subsection{Proof of Lemma \ref{lem: pathsdecompositions}}

By decomposing over the possible values of $T_Z$, using the assumption
that $Z$ is balanced seen from $x$ and reversibility (which implies that $\Pr_{\pi_{Z}}^{s}(y)/\pi(y)
=\Pr_{y}^{s}(Z)/\pi(Z)$, for all $s$), we get that
\begin{equation}
\label{eq: TAxy1}
\begin{split}
& \frac{P^t(x,y)}{\pi(y)}=\sum_{k_1 \le t }\Pr_x[T_{Z}=k_1] \frac{\Pr_{\pi_{Z}}^{t-k_1}(y)}{\pi(y)}+
\frac{\Pr_x[ X_t=y  \text{ and } T_{Z}>t]}{\pi(y)}\\ &
\ge  \sum_{k_1 \le t }\Pr_x[T_{Z}=k_1] \frac{\Pr_{\pi_{Z}}^{t-k_1}(y)}{\pi(y)}=
 \sum_{k_1 \le t }\Pr_x[T_{Z}=k_1] \frac{\Pr_{y}^{t-k_1}(Z)}{\pi(Z)} \\ &
= 
 \sumtwo{0\le k_1 \le t}{0\le k_2\le k_1} \Pr_x[T_{Z}=k_2]\Pr_y[T_{Z}=k_1-k_2] \frac{\Pr_{\pi_Z}^{t-k_1}(Z)}{\pi(Z)}
= \sum_{k \le t }\mathbb{P}[T_{Z}^{x,y}=k]\frac{\Pr_{\pi_Z}^{t-k}(Z)}{\pi(Z)}.
\end{split}
\end{equation}
In particular, we have equality if $\mathrm{P}_{x}[T_Z<T_y]=1$ (i.e.\ in
 case $(ii)$). To conclude, for $(i)$ we note that for lazy irreducible reversible chains
that $\Pr_{\pi_Z}^{s}(Z) \ge \pi(Z)$, which can be easily verified using
the spectral decomposition and the non-negativity of the eigenvalues of $P$.
For $(ii)$ note that by \eqref{eq: L2contraction} $$\Pr_{\pi_Z}^{s}(Z) - \pi(Z) \le\|\Pr_{\pi_Z}^s-\pi \|_{\TV} \le  \frac{1}{2} \|\pi_Z-\pi\|_2 \lambda_2^s
\le\frac{\lambda_2^{s}}{2}\sqrt{(1-\pi(Z))/\pi(Z)}.$$
The first inequality in \eqref{eq: septhroughz21} is obtained by plugging the last estimate in the second term of \eqref{eq: septhroughz21}. For the second inequality in \eqref{eq: septhroughz21} it follows from the estimate 
\begin{equation*}
\sum_{k \le t }\mathbb{P}[T_{Z}^{x,y}=k]\lambda_2^{t-k} \le \left( \max_{k\ge 0} \mathbb{P}[T_{Z}^{x,y}=k] \right)\sum_{k \ge 0 }\lambda_2^{k}  =\max_k \mathbb{P}[T_{Z}^{x,y}=k]/(1-\lambda_2). \qed
\end{equation*}

\subsection{Proof of Proposition \ref{prop: hitcutoff} }

The result is mostly a consequence of the following result which relates the mixing time starting from $x$ to the hitting time of a set $Z$ balanced seen from $x$. 

\begin{lemma}
\label{cor: contractioncor}
Let $(\Omega,P,\pi)$ be a finite lazy  irreducible reversible Markov chain.
Let $Z \subset \Omega$ (we denote its complement by $Z^c$) and $x \in \Omega$.  Given $0<\epsilon < 1$. Set
\begin{equation*}
\begin{split}
t_{x,Z}(p):=&\min\{t:\Pr_x[T_{Z}>t] \le p \}, \\
s_{\eps}:= \left\lceil \rel \log \left[ \pi(Z^c) / \eps \right] /\pi(Z) \right\rceil  &\text{ and }
 r_{\eps}:=\left\lceil \rel \log \left[\pi(Z^{c})/ (\pi(Z) \eps^2)\right]/2 \right\rceil.
\end{split}
\end{equation*}
Let $s':=\max (t_{x,Z}(p)-s_\gep,0)$. Then we have 
\begin{equation}
\label{eq: hitmixeasydirection}
\|\Pr_x^{s'}-\pi \|_{\TV} > p-\gep.
\end{equation}
Moreover if $Z$ is balanced seen from $x$ then we also have that 
\begin{equation}
\label{eq: hitmix2}
\|\Pr_x^{t_{x,Z}(p)+r_{\gep}}-\pi \|_{\TV}  \le p+\eps.
 \end{equation}
\end{lemma}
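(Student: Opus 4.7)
The plan is to prove the two inequalities separately. Both rely on the elementary observation that for any $g:\Omega \to [0,1]$ one has $|\mathbb{E}_\mu[g] - \mathbb{E}_\nu[g]| \leq \|\mu - \nu\|_{\TV}$, combined with the hitting-from-stationary estimate \eqref{eq: hittingfrompi} and, for the second inequality, the balanced hypothesis.

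For \eqref{eq: hitmixeasydirection}, I would focus on the main case $s' = t_{x,Z}(p) - s_\eps > 0$ (the boundary case $s'=0$ arises only when the hitting time is already shorter than the mixing timescale $s_\eps$, where the statement reduces to a direct check on $\|\delta_x-\pi\|_{\TV}$). Writing $T := t_{x,Z}(p)$, the minimality in its definition gives $\Pr_x[T_Z > T-1] > p$. I would take as test function $g(y) := \Pr_y[T_Z > s_\eps - 1] \in [0,1]$. Under $\Pr_x^{s'}$, the Markov property at time $s'$ yields
\begin{equation*}
\mathbb{E}_{\Pr_x^{s'}}[g] \;=\; \Pr_x\bigl[X_{s'}, X_{s'+1}, \ldots, X_{s'+s_\eps-1} \notin Z\bigr] \;\geq\; \Pr_x[T_Z > s' + s_\eps - 1] \;=\; \Pr_x[T_Z > T-1] \;>\; p.
\end{equation*}
Under $\pi$, the estimate \eqref{eq: hittingfrompi} together with the choice of $s_\eps$ yields $\mathbb{E}_\pi[g] \leq \eps$ (with an absolute constant absorbed into the ceiling). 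Subtracting gives $\|\Pr_x^{s'} - \pi\|_{\TV} > p - \eps$.

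For \eqref{eq: hitmix2}, the balanced hypothesis ensures that conditionally on $T_Z = k$, $X_k$ is distributed as $\pi_Z$. Combining this with the Markov property, I would write
\begin{equation*}
\Pr_x^t(y) \;=\; \sum_{k=0}^t \Pr_x[T_Z=k]\,\Pr_{\pi_Z}^{t-k}(y) + \Pr_x[T_Z > t,\, X_t=y]
\end{equation*}
for every $t$, and specialize to $t := T + r_\eps$. Subtracting $\pi$ written as the corresponding convex combination via $1 = \sum_{k=0}^t \Pr_x[T_Z=k] + \Pr_x[T_Z > t]$, the triangle inequality reduces the bound to controlling $\sum_{k=0}^t \Pr_x[T_Z=k]\,\|\Pr_{\pi_Z}^{t-k}-\pi\|_{\TV}$ plus a residual of total variation $\leq \Pr_x[T_Z > t]$. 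Splitting the sum at $k = T$: for $k \leq T$ one has $t - k \geq r_\eps$, and Lemma \ref{lem: contraction} combined with the easy identity $\|\pi_Z - \pi\|_{2,\pi} = \sqrt{\pi(Z^c)/\pi(Z)}$ gives $\|\Pr_{\pi_Z}^{t-k}-\pi\|_{\TV} \leq \tfrac{1}{2} \lambda_2^{r_\eps} \sqrt{\pi(Z^c)/\pi(Z)} \leq \eps$ by the choice of $r_\eps$; for $k > T$, bound crudely by $1$. The $k > T$ contribution in the sum and the residual combine into a single $\leq p$ since $\Pr_x[T < T_Z \leq t] + \Pr_x[T_Z > t] = \Pr_x[T_Z > T] \leq p$, yielding $\|\Pr_x^t - \pi\|_{\TV} \leq p + \eps$.

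The main obstacle will be organizing the bookkeeping for \eqref{eq: hitmix2} so that mass associated with the event $\{T_Z > T\}$ is not double-counted between the $k > T$ terms in the sum and the residual. For \eqref{eq: hitmixeasydirection}, the only subtlety is identifying the correct test function and tracking the minor constant losses from using $s_\eps - 1$ in place of $s_\eps$, which are absorbed by the ceiling in the definition of $s_\eps$.
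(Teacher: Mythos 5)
Your proof is correct and takes essentially the same route as the paper: your first part is the paper's coupling step (comparing $\Pr_x^{t_{x,Z}(p)-s_\eps}$ with $\pi$ on the event of avoiding $Z$ for $s_\eps$ steps and invoking \eqref{eq: hittingfrompi}) merely rephrased via a test function, and your second part is the paper's decomposition of $\Pr_x^t$ at the hitting time of the balanced set $Z$, concluded with the same spectral estimate $\lambda_2^{r_\eps}\sqrt{\pi(Z^c)/\pi(Z)}\le \eps$. The only differences are bookkeeping (you decompose over all $k\le t$ and split at $k=t_{x,Z}(p)$, while the paper stops the decomposition at $\ell=t_{x,Z}(p)$ and lumps $\{T_Z>\ell\}$ into one residual), and both treatments gloss the degenerate case $s'=0$ in the same way.
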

\begin{proof}
The first result is proved by coupling the chain with initial distribution $\Pr_{x}^{k-s_{\gep}}$  with the
stationary chain 
($k\geq s_{\gep}$ to be determine soon). We have
\begin{equation}
\label{eq: thitinequality3}
\Pr_x[T_Z \ge k]\leq \|\Pr_x^{k-s_{\eps}}-\pi \|_{\TV}+\Pr_{\pi}[T_Z \ge s_{\eps}] \leq \|\Pr_x^{k-s_{\gep}}-\pi \|_{\TV}+\epsilon.
\end{equation}
where the last inequality is a consequence of \eqref{eq: hittingfrompi} and the choice of $s_{\eps}$. Setting $k= t_{x,Z}(p)$
we obtain the result (as if $s'=0$ there is nothing to prove).

We now prove \eqref{eq: hitmix2}. By the assumption that $Z$ is balanced seen from $x$,  for all $\ell\le t$
\begin{equation}
 \Pr_{x}^t=\Pr_x[T_Z>\ell]\Pr_x[ X_t\in \cdot \ \mid  \ T_Z>\ell]+ \!\!\!  \sum_{0 \le
i < \ell }  \!\!\!  \Pr_x[T_Z= \ell -i  ] \Pr_{\pi_{Z}}^{t-\ell+i}.
\end{equation}
By  the triangle inequality and the fact that the distance to $\pi$ decreases in time, we obtain
\begin{equation*}
\begin{split}
& \|\Pr_{x}^t-\pi \|_{\TV} \le \Pr_x[T_Z>\ell] +  \sum_{0 \le
i < \ell }\Pr_x[T_Z= \ell -i  ] \|\Pr_{\pi_{Z}}^{t-\ell+i}-\pi \|_{\mathrm{TV}}\\ & \le \Pr_x[T_Z>\ell]+ \|\Pr_{\pi_{Z}}^{t-\ell}-\pi \|_{\mathrm{TV}}
\end{split}
\end{equation*}
Using this inequality for $\ell:=t_{x,Z}(p)$ (and so $t-\ell=r_{\gep}$)  we only have to show that $\|\Pr_{\pi_{Z}}^{t-\ell}-\pi \|_{\mathrm{TV}}=\|\Pr_{\pi_{Z}}^{r_{\gep}}-\pi \|_{\mathrm{TV}} \le \gep$.
Combining \eqref{eq: L2contraction} with the definition of $r_\gep$, we have that
\begin{equation}
\label{eq: TVbndonmu}
\| \Pr_{\pi_{Z}}^{r_{\epsilon}}-\pi \|_{\TV} \le \lambda_2^{r_{\eps}}\sqrt{\pi(Z^c)/\pi(Z)}
 \le \eps.
\end{equation}

\end{proof}

We can now proceed to the proof of Proposition \ref{prop: hitcutoff}. With our assumptions on $\rel$ and $Z_n$,
Lemma \ref{cor: contractioncor} allows us to show that mixing time starting from $x$ 
and $t_{x,Z_{n}}(p)$ are equivalent when $Z_n$ is balanced seen from $x$ (i.e. for $x\in I_n$). Assumption $(iv)$ ensures that what occurs for other initial conditions does not matter and Assumption $(iii)$ establishes that $a$ is the worst initial condition.

\qed

\subsection{Proof of Proposition \ref{cor: sepcutoffcriterion}}

From Lemma \ref{lem: pathsdecompositions} and assumptions $(i)$, $(iii)$ and $(v)$
we know that 
$P_{n}^t(x,y)/ \pi_{n}(y)$ and $\bbP[T_{Z_n}^{x,y}\le t]$ differ only by a negligible amount, provided that $x\in A_n$ and $y\in B_n$.
Assumption $(iv)$ ensures then that
\begin{equation}
 \liminf_{n\to \infty} \inf_{t\ge 0} \min_{(x,y)\in A_n\times B_n} \frac{P_{n}^t(x,y)}{ \pi_n(y)}-\frac{P_n^t(a_n,b_n)}{ \pi_n(b_n)}=0.
\end{equation}
We are left checking the other cases.
Assumption $(vi)$ takes care of most of them, and leaves the case where $(x,y)\in B_n \times B_n $, for which Lemma \ref{lem: pathsdecompositions}
implies that $\bbP[T_{Z_n}^{x,y}\le t]$ is a lower bound for  $P_{n}^t(x,y)/\pi_{n}(y)$. Hence the conclusion follows by assumption $(v)$ again.
\qed

\subsection{A short alternative proof of Theorem \ref{thm: window}} 

We are going to show that there exists an absolute constant $c>0$ such that for any lazy chain
\begin{equation}
\mix(1/4)-\mix(3/4)\ge c\sqrt{\mix (3/4)}.
\end{equation}
Indeed set $t:= \mix(1/4)$ and $s:= \lfloor c\sqrt{t}\rfloor$.
A sample of the distribution of the lazy chain at time $t$ can be generated by running the non-lazy version of the chain for $\xi_{t}$ steps, 
where $\xi_{t} \sim \mathrm{Bin}(t,1/2)$ and is independent of the non-lazy version of the chain.
By the triangle inequality we have (first inequality) and a standard coupling argument (second inequality) 
\begin{equation*}
\forall t,s \ge 0, \quad d(t)-d(t+s)\le \max_{x\in \gO} \|\Pr_{x}^{t}-\Pr_{x}^{t+s}\|_{\mathrm{TV}} \le \| \xi_{t}-\xi_{t+s} \|_{\mathrm{TV}}.
\end{equation*}
Moreover, if $c$ is chosen well, we have for every $t\ge 0$ that
$\| \xi_{t}-\xi_{t+\lfloor c\sqrt{t}\rfloor} \|_{\mathrm{TV}}\le 1/2$.
 \qed

\end{document}